\newtheorem{teo}{Theorem}[section]
\newtheorem{defin}[teo]{Definition}
\newtheorem{remark}[teo]{Remark}
\newtheorem{cor}[teo]{Corollary}
\newtheorem{lemma}[teo]{Lemma}
\def\elle#1{L^{#1}(\Omega)}
\def\w{H_0^{1}(\Omega)}
\def\norma#1#2{\|#1\|_{\lower 4pt \hbox{$\scriptstyle #2$}}}
\def\Omegat{\tilde{\Omega}}
\def\finedim
\def\gw{G_{\tilde{k}}(w_n)}
\def\elle#1{L^{#1}(\Omega)}
\def\w{W_0^{1,2}(\Omega)}
\def\w1{W_0^{1,1}(\Omega)}
\def\dys{\displaystyle}
\def\w{H_0^{1}(\Omega)}
\def\be{\begin{equation}}
\def\ee{\end{equation}}
\def\bc{\begin{cases}}
\def\ec{\end{cases}}
\def\be{\begin{equation}}
\def\ee{\end{equation}}
 \numberwithin{equation}{section}
\newcommand{\EEE}{\color{black}}
\newcommand{\USC}{\text{\rm USC}}
\newcommand{\LSC}{\text{\rm LSC}}
\newcommand{\Rz}{\mathbb{R}}
 \numberwithin{equation}{section}
\title[Viscosity solutions for nonlocal equations]{
 Viscosity solutions for nonlocal equations \\with space-dependent operators}
\author[S. Buccheri] {Stefano Buccheri}
\address[Stefano Buccheri]{Faculty of Mathematics, University of Vienna, 
Oskar-Morgenstern-Platz 1, A-1090 Vienna, Austria}
\email{stefano.buccheri@univie.ac.at}
\author[U. Stefanelli] {Ulisse Stefanelli} 
\address[Ulisse Stefanelli]{Faculty of Mathematics, University of Vienna, 
Oskar-Morgenstern-Platz 1, A-1090 Vienna, Austria,\,\&
Vienna Research Platform on Accelerating Photoreaction Discovery, University of Vienna, W\"ahringerstrasse 17, A-1090 Vienna, Austria,\,\&
Istituto di Matematica Applicata e Tecnologie Informatiche E. Magenes,
via Ferrata 1, I-27100 Pavia, Italy.
}
\email{ulisse.stefanelli@univie.ac.at}
\subjclass[2010]{}
\keywords{Fractional laplacian,  Perron method, principal eigenvale, refiend maximum principle, half-relaxed limit\EEE, long-time behavior}
\begin{document}

\begin{abstract}
  We consider a class of elliptic and parabolic problems,
  featuring a specific nonlocal operator of fractional-laplacian
  type, where integration is taken on variable domains. 
  Both elliptic and parabolic problems are proved to be uniquely solvable in
  the viscosity sense.   
% The viscosity solution of the elliptic problem is proved to be
%  \MMM H\"older continuous \EEE[ Questa frase la eliminerei/cambierei soprattutto se ci teniamo le ipotesi attuali. Non proviamo nessun nuovo risultato di regolarita' in definitiva, ne usiamo uno gia' esistente\EEE]. In addition, we study the corresponding eigenvalue
%  problems. 
Moreover, some spectral properties of the elliptic
operator are investigated, proving existence and simplicity of the first eigenvalue. Eventually, parabolic solutions are proven to converge to
  the corresponding limiting elliptic solution in the long-time limit.  
\end{abstract}

\maketitle
%\tableofcontents
\section{Introduction}
 The study of PDE problems driven by nonlocal operators is
attracting an ever growing attention. This is in part 
motivated by the great relevance of such operators in applications, 
among which L\'evy processes, differential games, and image
processing, just to mention a few. %,  but also by  the richness of
% the associated  mathematical theory.
The paramount
example %in this class
of nonlocal operator  is the {\it
  fractional laplacian}, which can be defined in the following %. One way to define it is by means of a singular integral to be
% meant in the
principal-value sense
\be\label{def}
(-\Delta)_ {s} u (x) =
p.v.\int_{\mathbb{R}^N}\frac{u(x)-u(y)}{|x-y|^{N+2s}}dy \quad
\text{ for} \  s\in(0,1).
\ee
 In this paper we focus on elliptic and parabolic
problems featuring a localized version of the classical
fractional-laplacian operator, namely,  
\be\label{10-6bis0}
u(x)\mapsto p.v.  \int_{ 
  \Omega(x)}  \frac{u(x)-u(y)}{|x-y|^{N+2s}} dy.
\ee
In contrast with the classical fractional laplacian, the main feature
in \eqref{10-6bis0} is that integration is taken with respect to a $x$-dependent bounded
set $\Omega(x)$. Such a modification is inspired by the analysis of
the hydrodynamic limit of kinetic
equations \cite{pedro} and of peridynamics \cite{Silling00}.
We comment on these connection in Subsection \ref{sec:connections}
below.

In order to specify our setting further, let us recall that the
homogeneous Dirichlet problem associated with the classical fractional laplacian $(-\Delta)_ {s}$\eqref{def} in a
bounded set $\Omega\subset \mathbb{R}^N$ requires to prescribe
the nonlocal boundary condition $u=0$ on  $\mathbb{R}^N\setminus
\Omega$. Under such condition the fractional laplacian can be written as 
\begin{align}\label{decomposition}
&(-\Delta)_ {s} u (x) = p.v.\int_{\mathbb{R}^N}\frac{u(x)-u(y)\chi_{\Omega}}{|x-y|^{N+2s}}dy=p.v.\int_{\Omega}\frac{u(x)-u(y)}{|x-y|^{N+2s}}dy+u(x)\int_{\mathbb{R}^N\setminus\Omega}\frac{1}{|x-y|^{N+2s}}dy
  \\
  &\quad =:(-\Delta)^{\Omega}_ {s} u (x) +k(x) u(x)\nonumber,
  \end{align} where $\chi_\Omega$ indicates the characteristic
  function of $\Omega$. 
The nonlocal operator $(-\Delta)^{\Omega}_ {s}$ is usually
called {\it regional fractional laplacian}. By indicating with
$d(x)$ the distance of $x\in \Omega$ from the boundary $\partial
\Omega$,  under mild regularity assumption
on $\partial\Omega$  the function $k(x)$ can be proved to satisfy
\begin{equation}\label{killing}
 \frac{\alpha}{d(x)^{2s}}\le k(x)\le  \frac{\beta}{d(x)^{2s}},
\end{equation}
 for some $0<\alpha<\beta$. We provide a proof of this property
 in % This property is very well known and we add a proof in
 Lemma \ref{hbeha}. % below for completeness\EEE.
 Differently from  the classical fractional laplacian  \eqref{def}, 
the regional laplacian $(-\Delta)^{\Omega}_ {s}$ acts on
functions defined in $\Omega$. Correspondingly, boundary
conditions for the homogeneous Dirichlet problem for
$(-\Delta)^{\Omega}_ {s}$ can be directly prescribed % and, typically, the associated
% boundary conditions are prescribed
on $\partial\Omega$. %As one can imagine looking at
% \eqref{decomposition}, t
%The two operators $(-\Delta)_ {s}$ and $(-\Delta)^{\Omega}_ {s}$ \MMM
%hence differ %from each other
%especially in the vicinity of\footnote Sembra che la differenza
%sia dovuta al fatto che le condizioni al bordo siano nel primo caso
%non locali e nel secondo caso locali. Ma non e' cosi'. Il punto e' la
%presenza o meno di un termine tipo $k(x)$. Frase alternativa :
The two operators $(-\Delta)_ {s}$ and $(-\Delta)^{\Omega}_ {s}$
differ especially in the vicinity of the boundary, as one can expect
looking at \eqref{decomposition}. 
the boundary. While for any $s\in(0,1)$ the solution of the homogeneous Dirichlet problem
associated with \eqref{def} behaves as $d(x)^s$ as $x$
approaches $\partial \Omega$, the one associated to
$(-\Delta)^{\Omega}_ {s}$ goes to zero as $d(x)^{2s-1}$ for
$s\in(1/2,1)$. In fact, for $s\in(0,1/2]$, the Dirichlet
problem associated to the regional laplacian is not well-defined, independently of the regularity of $\partial \Omega$. 
This can be explained via trace theory  (the trace operator
exists only if $s>1/2$, see \cite{tar}, \cite{fall}), or in term of
the probabilistic process associated to $(-\Delta)^{\Omega}_ {s}$
(such a process reaches the boundary only if $s>1/2$, see
for instance \cite{bog}). Roughly speaking, we can say that the term
$k(x) u(x)$ regularizes the operator $(-\Delta)^{\Omega}_ {s}$ close
to the boundary, by forcing a quantified convergence of solution
to zero on approaching $\partial \Omega$. The reader can find
more on the relation between $(-\Delta)_ {s}$ and $(-\Delta)^{\Omega}_{s}$
in the recent survey \cite{alot} and in the references therein.

Inspired by position \eqref{10-6bis0}, by 
decomposition   \eqref{decomposition},  and by property
\eqref{killing}, we aim at
considering  more general nonlocal operators of the following form
\be\label{10-6}
h(x) u (x)+\mathcal{L}_{s} (\Omega(x),u (x)),
\ee
 where $\mathcal{L}_{s} (\Omega(x) ,u (x_0))$ is defined as
\be\label{10-6bis}
\mathcal{L}_{s} (\Omega(x),u (x_0))=p.v.  \int_{y\in
  \Omega(x)}  \frac{u(x_0)-u(y)}{|x-y|^{N+2s}} dy. \EEE
\ee
In the following, we often use the  change of coordinates
$z=x-y$. In this  new reference frame we will write 
\be \label{omegatxb}
\Omegat(x)=\{z\in\mathbb{R}^N \ : \ x-z\in \Omega(x)\}.
\ee
In \eqref{10-6}, the function $h(x)\in C(\Omega)$ is assumed to be given and to fulfill
\begin{equation}\label{alpha}
0<  \frac{\alpha}{d(x)^{2s}}\le h(x)\le  \frac{\beta}{d(x)^{2s}},  \ \ \ \mbox{with} \ \ \ 0<\alpha\le\beta,
\end{equation}
and the set-valued function $x\to\Omega(x)\subset\Omega$ is assumed to satisfy 
%\be
\begin{align}\label{continuita}
& \quad \forall x \in \Omega:\quad  \lim_{y\to x}|\Omega(y)\triangle\Omega(x)|=0,\\
 & \quad \exists \ \zeta\in \big(0,1/2\big) , \   \forall x \in \Omega :\quad \Omegat(x)\cap B_{r}(0)=\Sigma\cap B_{r}(0) \mbox{ for all } r\le \zeta d(x),\label{ostationary}
\end{align}
%\ee
for  some given open set $\Sigma$ such that
\be\label{Sigmadef}
\Sigma=-\Sigma \quad \mbox{and} \quad \left|\Sigma\cap
  \big(B_{2r}(0)\setminus B_r(0)\big)\right|\ge q|B_{2r}(0)\setminus
B_r(0)| \mbox{  for any } r>0,
\ee
with $q\in(0,1)$.  Note that the symmetry of $\Sigma$ is
required  for the well-definiteness of the  % in order to have
% the
operator %well defined for
on smooth functions and that assumption \eqref{continuita}
ensure that operator $\mathcal{L}_{s}$ from  \eqref{10-6bis} is
 continuous with respect to his arguments. For a more extensive
discussion on the hypothesis look at Section \ref{existingliterature}.

Our first goal is to show that the {\it elliptic problem} related to
the nonlocal operator \eqref{10-6} given by
\begin{equation}\label{10:17}
\begin{cases}
h(x)u (x)+\mathcal{L}_{s} (\Omega(x),u (x))= f(x) \qquad & \mbox{in } \Omega,\\
 u(x) = 0 & \mbox{on } \partial\Omega ,\\
\end{cases}
\end{equation}
is uniquely solvable, for any $f\in C(\Omega)$ such that
\be\label{fcon}
\exists \  \eta_f\in (0,2s), C>0 \ : \ |f(x)|d(x)^{2s- \eta_f}\le C.
\ee 
% The main features of \eqref{10-6} are, on one hand, that the integration domain in \eqref{10-6bis} depends on $x$, and, on the other, that the function $h(x)$ explodes as $d(x)^{2s}$ as $x$ approaches $\partial \Omega$, where $s$ is the fractional order of \eqref{10-6bis}. The first property allows us to restrict the nonlocal interaction to subregions of $\Omega$, while the second one assures that the Dirichlet problem \eqref{10:17} is well-posed. Having in mind the decomposition \eqref{decomposition}, we can heuristically say that the Dirichlet boundary condition is embedded directly in the equation of \eqref{10:17}, through assumptions \eqref{alpha} and~\eqref{fcon}.\\
%  
  In the current setting it is natural to address
problem \eqref{10:17} in the viscosity sense. %with the
% viscosity solutions approach.
Indeed, assumptions \eqref{continuita}-\eqref{Sigmadef} imply  that the
operator \eqref{10-6} satisfies the comparison principle. Moreover the
growth conditions \eqref{alpha} and~\eqref{fcon} allow us to
build a barrier for  problem \eqref{10:17} of the form
\be\label{barrierintro}
C_\alpha d(x)^{ \eta} \ \ \ \mbox{for some small } \eta>0.
\ee
 % at the beginning of the
                                % section). Putting together this two
                                % ingredients ( by combining the comparison principle and existence of a barrier,
Having comparison and barriers at hand, we can implement the Perron method and prove in Theorem \ref{teide} the existence of a
 unique viscosity solution for \eqref{10:17}. 
Note that $C_\alpha$ degenerates with $\alpha$. In particular,
 %We explicitly pointed out the dependence on $\alpha$ for the
% constant $C_\alpha$, to stress that,
if the term $h(x)$ degenerates %vanishes or deteriorates
% close to
at the boundary,  solvability of problem \eqref{10:17}  may fail,
at least for  $s\in(0,1/2]$, %  we do not expect problem \eqref{10:17}
                             %  to be solvable (at least for
                             %  $s\in(0,\frac12]$,
see the above discussion.

A second aim of our paper is to address the spectral
properties of the operator \eqref{10-6}. We focus on the first
eigenvalue associated to \eqref{10-6} with homogeneous Dirichlet
boundary conditions and  we show that there exists a unique
$\overline{\lambda}>0$ such that the problem
 \begin{equation}\label{10:17ter}
\begin{cases}
h(x)v (x)+\mathcal{L}_{s} (\Omega(x),v (x))= \overline{\lambda} v  (x)\qquad & \mbox{in } \Omega,\\
 v(x) = 0 & \mbox{on } \partial\Omega,\\
\end{cases}
\end{equation}
admits a strictly positive viscosity  solution. Moreover
such a solution is unique, up to multiplication by
constants. % In other words, $\overline{\lambda}$ is the first
% eigenvalue of \eqref{10:17ter}.
As our setting in nonvariational, the characterization of such
first %Since we are not in a variational setting, we characterize
% such
eigenvalue follows %being inspired by
the approach of the seminal work \cite{bere}. More precisely, we
define $\overline{\lambda}$ as the supremum of the values $\lambda\in \mathbb{R}$ such that the problem
\be\label{refineed}
\begin{cases}
h(x)u(x)+\mathcal{L}_{s} (\Omega(x),u (x))= \lambda u (x) + f (x)\EEE\qquad & \mbox{in } \Omega,\\
 u(x) = 0 & \mbox{on } \partial\Omega\, ,\\
 u(x) > 0 & \mbox{in } \Omega\, ,\\
\end{cases}
\ee
admits a solution, for  some given nonnegative and 
       nontrivial $f\in C(\Omega)$  satisfying
        \eqref{fcon}, see Theorems \ref{mussaka}-\ref{helmholtz}.
         As we shall see, this characterization does not depend
        on the particular choice of $f$. Such a definition is slightly
        different from the usual one (see for instance \cite{biri},
        \cite{busca}, \cite{quaas} and \cite{biswas} for the same
        approach in both local and nonlocal settings), being
        based on the concept of solution instead of that of
        supersolution. %\MMM This allows us to prove stronger
       % properties on $\overline{\lambda}$, including its
         %             uniqueness.\footnote{ Scritto cosi' sembra che sia necessario usare questa caratterizzazione diversa da quella standard. In relata' e' solo piu' comoda. Alternative: 1)semplicemente cancellare la frase; 2) We prefer this alternative since is more versatile in proving some properties of $\overline{\lambda}$, as its uniqueness. \EEE  } \EEE %  makes this concept better suited
        %We prefer this alternative since is more versatile in proving some properties of $\overline{\lambda}$, as its uniqueness.\\

A further focus of this paper is the study of the
evolutionary counterpart of \eqref{10:17}, namely, the {\it
  parabolic problem} 
\begin{equation}\label{10:17bis}
\begin{cases}
\partial_t u (t,x) (x)+h(x)u
(x)+\mathcal{L}_{s} (\Omega (t,x),u (t,x))=
f(t,  x) \qquad & \mbox{in } (0,T)\times\Omega,\\
 u(t,x) = 0 & \mbox{on } (0,T)\times\partial\Omega ,\\
 u(0,x) = u_0(x) & \mbox{on }   \Omega.
\end{cases}
\end{equation}
We prove existence and uniqueness of a global-in time viscosity
 solution to problem \eqref{10:17bis}. See Theorem \ref{existence}
below for the precise statement in a more general setting, where
a time-dependent version of assumption \eqref{ostationary} is
considered. The behavior of the solution \EEE %\EEE After that, we
                                %address the question of the limiting
                                %behavior of the solution
$u(x,t)$ for large times is then addressed in Theorem
\ref{lalaguna}:  Taking advantage of the characterization of $\overline{\lambda}$ we prove that for any $\lambda <\overline{\lambda}$ there exists $C_{\lambda}$ such that
\[
|u(t,x)-u(x)|\le C_{\lambda} \overline{\varphi} (x) e^{-\lambda t},
\]
where $\overline{\varphi}$ is the normalized positive eigenfunction associated to $\overline{\lambda}$, $u(x)$ is the solution of the elliptic problem
\eqref{10:17} and $u(t,x)$ solves the parabolic problem
\eqref{10:17bis} for the same time-independent forcing $f(x)$. \EEE

\subsection{Relation with applications}\label{sec:connections}
 As mentioned, the specific form of operator $\mathcal{L}_{s}$, in particular the
 dependence of the integration domain $\Omega(x)$ on $x$, occurs in
 connection with different applications.

 A first occurrence of operators of the type of $\mathcal{L}_{s}$ is
 the study of the hydrodynamic limit of 
collisional kinetic equations with a heavy-tailed thermodynamic
equilibrium. When posed in the whole space, the reference nonlocal
operator in the limit is the classical fractional laplacian
\eqref{def}, see
\cite{melletbis}. The restriction of the dynamics to a  bounded domain with a zero
inflow condition at the boundary asks for considering
\eqref{10-6bis0} instead \cite{pedro}. In this connection,
$\Omega(x)$ is defined to be the largest star-shaped set centered
at $x\in\Omega$ and contained in $\Omega$. The heuristics for this
choice is that a particle centered at $x$ is allowed to move along
straight paths and is removed from the system as soon as it reaches
the boundary. Hence, the possible interaction range of a particle
sitting at $x$ is exactly  $\Omega(x)$. % 
%  da qui, 
% A first motivation for the study of \eqref{10-6} comes from the hydrodynamic limit of some kinetic equation on bounded domain with non-standard velocity distribution. The rough idea is to consider collisional kinetic equation with a heavy-tailed thermodynamic equilibrium and study the asymptotic of a properly rescaled problem. If one set the kitetic equation in the whole space, the resulting hydrodinamic limit will lead to \eqref{def} (see \cite{melletbis}).
% Instead, the authors of \cite{pedro} address the case of kinetic equation on a bounded domain with a zero inflow condition at the boundary. 
 In particular, the  resulting hydrodynamic  limit  under
homogeneous Dirichlet conditions features the nonlocal functional 
\be\label{start}
a(x)u (x)+ p.v.\int_{\Omega(x)}
\frac{u(x)-u(y)}{|x-y|^{N+2s}} dy =: a(x)u (x) +
(-\Delta)^\star_s u(x),  \ \ \ s\in(0,1).  %(-\Delta)_{s}^{\star}u (x)
\ee
% where the fractional operator is formally defined as
% \[
% (-\Delta)_{s}^{\star}u  (x)= p.v.\int_{\Omega(x)}  \frac{u(x)-u(y)}{|x-y|^{N+2s}} dy,  \ \ \ s\in(0,1),%\ \ \ \ \ \frac{\lambda}{|x-y|^{N+\alpha}}\le K(x,y)\le \frac{\Lambda}{|x-y|^{N+\alpha}}
% \]
 Here, the function  %with %$S(x)$ the largest star shaped set
                             %centered at $x\in\Omega$ and contained
                             %in $\Omega$, and the function
$a(x)$
has the following specific form
\be\label{15:15}
a(x)=\int_{\mathbb{R}^N}\frac{1}{|y|^{N+2s}}e^{-\frac{d(x,\sigma(y))}{|y|}}dy,
\ee
with $d(x,\sigma(y))$  being  the length of the segment joining $x\in \Omega$
with the closest intersection point between $\partial \Omega$ and the
ray starting from $x$ with direction $\sigma(y)
=\frac{y}{|y|}$.  Clearly,  if $\Omega$ is convex  one
has that $\Omega(x)\equiv \Omega$ for all $x$  and the function
$a(x)$ coincides, up to a constant, with the function $k(x)$ of
\eqref{decomposition} (see Lemma \ref{acca}).  In this case  we recover exactly
(up to a constant) the operator in \eqref{decomposition}.   Note
however that  
even in the case of a nonconvex domain $\Omega$, the function  $a(x)$ satisfies the condition
\eqref{alpha} (see Lemma \ref{hbeha}). 

 A second context where nonlocal operators of the type of
\eqref{10-6bis} arise is that of {\it peridynamics} \cite{Silling00}. This is a
nonlocal mechanical theory, based on the
formulation of equilibrium systems in integral instead of differential
terms.
Forces acting on the material point $x$ are obtained as a combined effect of interactions with other
points in a given neighborhood. This results in an integral featuring a radial
weight which modulates the influence of nearby points in terms of
their distance \cite{Emmrich,survey}. A reference nonlocal operator in
this connection is
\be\label{peridyn}
u(x) \mapsto %(-\Delta)_s^{\rho}u(x)=
p.v.\int_{  B_{\rho}(x)}  \frac{u(x)-u(y)}{|x-y|^{N+2s}} dy.
\ee
Here, $ B_{\rho}(x)$ is the call of radius $\rho>0$ centered at
$x$. In particular, the parameter $\rho$ measures the interaction
range. Such operators have used to approximate the
fractional laplacian in numerical simulations (see \cite{duo} and the
reference therein), note also the parametric analysis in  
\cite{burkovska}. The operator $\mathcal{L}_s$ in \eqref{10-6bis} corresponds hence to a natural
generalization of the latter to the case of an interaction range which
varies  along the body, as could be the case in presence of a
combination different material systems. This would correspond to
choosing a varying $\rho(x)$.

\subsection{Existing literature}\label{existingliterature}
 To our knowledge, operator \eqref{10-6} has not been studied
yet.    %We point out that, up to our knowledge, an operator with
            %the specific structure of \eqref{10-6} has not been
            %addressed in the literature yet.
Despite its simple structure when compared to the general operators
 usually allowed in the fully nonlinear setting, most of the 
available tools seem not to directly apply. 

 In this section, we aim at presenting a brief account of the
literature in order to put our contribution in context.  %Now let
                                %us place our work within the existing
                                %results. After
Following  the seminal work \cite{CIL}, the existence theory of
viscosity solutions, through comparison principle, barriers, and
Perron method, has been generalized  to  a large class of elliptic and parabolic integral differential equations, see for instance \cite{barimb}, \cite{bci}, \cite{caff}, \cite{cd}, \cite{cdbis} and \cite{jk}.

% The most representative contributions for the
Comparison principles  for nonlocal problems  in the viscosity
setting  can be found in  \cite{barimb}, see also
\cite{jk}. One of the key structural assumptions of these works reads,
in our notation,  
\be\label{bbarles}
\int_{\mathbb{R}^N}|\chi_{\Omegat(x)}-\chi_{\Omegat(y)}||z|^{2-N-2s}dz\le c|x-y|^2
\ee
for some positive constant $c>0$ (see assumption (35) in
\cite{barimb}). This type of condition allows the authors to implement
the  variable-doubling  strategy of \cite{CIL}  for  a
large class of  operators of so-called L\'evy-Ito type.  This is not expected here, % making this approach %Anyway such a result is valid
                                %for uniformly continuous functions
                                %and then it seems
% not suitable for %our purposes (namely,
%the implementation of the Perron method.  Indeed, 
%Our set of assumptions does not imply \eqref{bbarles} and ideed such a condition may fail, as the following simple argument shows.  Let $s\to\nu(s)$ any real valued function continuous at $0$ such that $\nu(0)=0$, assume that $\Omegat(x)=B_1(0)$ for some $x\in\Omega$, and that $\Omegat(y)=B_{(\rho(|x-y|))}(0)$ in a small neighborhood of $x$, with $\rho(s)=(1-\nu(s))^{\frac1{2-2s}}$. Then, the integral in the left hand side above is equal to $\omega_N \nu(|x-y|)$.\\
%With respect to \cite{barimb}, we point out that
 for  our set of assumptions does not imply \eqref{bbarles} %and
                                % such uniform continuity \EEE may
                                %fail,
 as the following simple argument shows: let
$\Omega$ be the unitary ball centered at the origin and $\Omegat(x)=B_{\rho(x)}$ for any
$x\in\Omega$ with $\rho(x)=d(x)^{ 1/(2-2s)}$. Then, for any $x,y\in\Omega$ we get
\[
\int_{\mathbb{R}^N}|\chi_{\Omegat(x)}-\chi_{\Omegat(y)}||z|^{2-N-2s}dz=\frac{\omega_N}{2-2s}|\rho(x)^{2-2s}-\rho(y)^{2-2s}|=\frac{\omega_N}{2-2s}|x-y|.
\]
Our alternative strategy is to assume that the operator is somehow
translation invariant \emph{close} to the singularity,  with 
this property  degenerating while \EEE approaching
$\partial\Omega$, see assumption \eqref{ostationary} above. This
allows for some cancellations that bypass the problem of the
singularity of the kernel. Instead of doubling variables, we use the
inf/sup-convolution  technique  to prove that sum of viscosity subsolution is still a viscosity subsolution. Eventually, we
also quote the interesting comparison result in
\cite{gms}. There,
the doubling of variable is combined with an optimal-transport
argument. Such an approach, however, requires the uniform
continuity of solutions, %is viable if solutions are uniformly
                         %continuous 
 and it is not clear how to adapt it for general viscosity solutions.

As far as the construction of barriers is concerned, notice that 
the typical  difficulty is to estimate from below a term of the type
\[
\int_{\mathbb{R}^N\setminus\Omega}\frac{1}{|x-y|^{N+2s}}dy,
\]
 which  requires some regularity on the boundary of
$\Omega$. Of course  we refer here to the standard case of the
fractional laplacian,  %this refers to the simple case of the
% fractional laplacian,
but the same idea can be  extended to more general operators, see 
% seen, for instance, in
 \cite[Lemma 1]{bci}.  In our case,  since we impose a priori condition \eqref{alpha}, we can actually deal with any open domain, paying the price of a poor control on the decay of the solution close to the boundary, see \eqref{barrierintro}.

For an alternative approach to the Perron method,  not relying on
the 
comparison principle and therefore produces discontinuous viscosity
solutions, we  refer   the interested reader to \cite{mou}.

For a comprehensive overview on the numerous contributions to the
regularity theory for viscosity solutions to nonlocal elliptic and
parabolic equations, we address the reader to the  rather detailed
 introductions of \cite{schwabsil} and \cite{krs}. Here, we provide a small overview on some results more closely related to our work.
The first fully PDE-oriented result about H\"older regularity for elliptic nonlocal operators has been obtained in \cite{silve}. 
A drawback of this approach is that it does not allow to
consider the limit $s\to1$. The first H\"older estimate
which is robust enough to pass to the limit as $s\to1$ has been
obtained in \cite{caff} (see also \cite{caffbis}) and then generalized
to parabolic equation in \cite{cd} and \cite{cdbis}. All these results
apply to operators whose kernel is pointwise controlled from above and
from below by the one of $(-\Delta)_s$. For results where such  pointwise control is not avaliable, we refer again to
\cite{schwabsil} and \cite{krs}. More in detail, our condition
\eqref{Sigmadef} is a simplified version of assumption (A3) in
\cite{schwabsil}. Condition \eqref{Sigmadef} allows us to
deduce an interior regularity estimate, in the spirit of the more general  \cite[Theorem 4.6]{mou}.
%Let us stress that we are not going to use that the such estimates are stable as $s\to1$. Then would be interesting to replace \eqref{Sigmadef} with the simpler density assumption
%\[
%\Sigma=-\Sigma \quad \mbox{and} \quad \left|\Sigma\cap  B_r\big)\right|\ge q|B_{r}| \mbox{ for any } r>0
%\]
%and prove an interior regularity result for the associated operator.
 \\

As mentioned, we define the first eigenvalue of \eqref{10-6} 
following the approach in   \cite{bere}.
  The advantage of this approach is that it  is independent of
   the variational structure of the %considered
  operator  by directly relying on % and rather takes advantage of
  % the relation among
  the  maximum principle,  as well as on the  existence of
  positive (super-)solutions. For this reason it
  has been fruitfully used in the framework of viscosity solution for
  second order fully nonlinear differential equations, see for
  instance \cite{biri}, \cite{busca}, and \cite{QS}
  
 An early   result related to  eigenvalues of  nonlocal
operators with singular kernel is  in  \cite{bego}  where
existence issues in presence of lower order terms are tackled. 
% .  Here,  the authors focus on existence issues for
% fractional equations with nonlinear lower order terms and they need
% to derive some properties of the principal eigenvalue along their
% arguments.
A  closer   reference is \cite{DQT}, where the principal
eigenvalues of some fractional nonlinear equations, with inf/sup
structure are studied. In this paper the authors prove, among other
results, existence and simplicity of principal eigenvalues together
with some isolation property and the antimaximum principle. Other
results following the same line of investigation can be found in
\cite{quaas} and \cite{biswas}. We point out that the operators
considered in these works are just  positively  homogeneous
(i.e. $\mathcal{L}(u)\neq-\mathcal{L}(-u)$),  which gives rise to
the existence of 
two principal {\it half-eigenvalues}, namely corresponding to
differently signed
eigenfunctions. We are not concerned with this phenomenon here. % 
                                % This phenomenon does not concern our
                                % case of study and we refer the
                                % interest reader to the introduction
                                % of aforementioned papers for more
                                % details.
A common tool used in the previous works to prove existence of
eingenvalues is a nonlinear version of the Krein-Rutmann theorem for
compact operators, see again \cite{quaas} and \cite{biswas} and 
the references  therein.  Let us also mention the recent %We
                                %also quote other two recent
                                %contributions,
 \cite{birga} and \cite{bismod}, that deal with  a different  kind of fractional operators.

To prove existence of the first eigenvalue, we follow a direct
approach based on the approximation of problem
\eqref{10:17ter}. Unlike the previously quoted papers,
we do not resort to a %have any 
global regularity result to deduce either the compactness of the operator or
the uniform convergence of approximating solutions. Instead, we
combine the  so-called {\it half-relaxed-limit} 
method, a version of the {\it refined maximum principle} and interior regularity result of \cite{schwabsil,mou}. The
half-relaxed limit method is a powerful tool to pass to the limit with
no other regularity then uniform boundedness, see for instance
\cite{barles} and  the  references therein. In general, the price to allow such generality is 
to handle discontinuous viscosity solutions.  We however avoind
this, for  % however we avoid this technicality since
we are able to prove a refined maximum principle for \eqref{refineed}
in the  spirit  of \cite{bere}. This, in turns, provides a
comparison between sub and super solutions,  eventually ensuring
\EEE continuity.

Due to  particular  structure of  our nonlocal 
operator, a key ingredient in our proof is a restriction procedure for
\eqref{10-6bis} in subdomains of $\Omega$,  which is where 
the density assumption \eqref{Sigmadef} is needed. Heuristically
speaking, such assumption forces the operator to \emph{look the same
  at every scale},  as for the kernel singularity and the %meaning
                                %that not only the singularity of the
                                %kernel is preserved but also the
 behavior \eqref{alpha} of the term $h$ (see Lemma \ref{localization}). 
 Eventually, our refined maximum principle allows us to show
 uniqueness of the first eigenvalue and its simplicity.

 A  general reference for the long-time behavior of solutions to
 nonlocal parabolic equation is the monograph \cite{bookjulio}. We also refer to \cite{berebis} and to the already mentioned  \cite{quaas}.
 In the latter work, a fractional operator with drift term is considered and the viscosity solution of the associated homogeneous 
 initial-boundary value problem is proved to converge to zero in the
 large-time limit.
 
 Here, we consider a non-homogeneous parabolic equation with initial datum and homogeneous Dirichled boundary condition.
 Moreover, we allow the coefficients of our operator to be time-dependent. Under suitable assumption on this time dependency, 
we prove that the parabolic solution converges to the stationary one
exponentially in time. The exponenential rate of convergence depends on the principal eigenvalue.

\section{Statement of the main results}

In this section, we %We
collect %in this section
our notation and state our main results. % We focuson viscosity solutions of the elliptic problem \eqref{10:17} and ofits parabolic counterpart \eqref{parabolic}. 

Given any $D\subset
\mathbb{R}^{M}$,  we indicate
\EEE upper and lower semicontinuous functions on $D$ as
\[
\USC(D)=\left\{u:D\to \mathbb{R} \ : \ \limsup_{z \to z_0}u(z)\le u(z_0)\right\},
\]
\[
\LSC(D)=\left\{u:D\to \mathbb{R} \ : \ \liminf_{z\to z_0}u(z)\ge u(z_0)\right\}.
\]
We also write $\USC_b(D)$ and $\LSC_b(D)$ for the set of upper and
lower semicontinuous functions that are bounded. Given a function
$u:D\to \mathbb R$ we indicate \EEE its {\it upper and lower
semicontinuous envelopes}  as
\[
u^*(z_0)=\limsup_{z\to z_0}u(z) \quad \text{and} \quad
u_*(z_0)=\liminf_{z\to z_0}u(z),
\]
respectively.  \EEE

%We now specify thisnotion in the parabolic case, by assuming $T\in (0,\infty]$. \EEE

%Let us now introduce the notions of solutions that we consider in the sequel. Let us set $T\in (0,\infty]$.
\begin{defin}[Viscosity solutions]\label{def1}
  Elliptic case:
  We say that $u\in \USC_b(  \Omega  )$ ($\in \LSC_b(  \Omega  )$) is a \emph{viscosity sub (super) solution} to  the equation
\[
h(x)u (x)+\mathcal{L}_{s} (\Omega(x),u (x))= f(x)
\]\EEE
if, whenever $x \in \Omega  $ and $\varphi\in
C^2( \Omega)$ are such that $u( x )=\varphi(x )$
and $u(y)\le \varphi(y)$ for all $y\in\Omega$, then  
\begin{equation*}
 h( x ) \varphi(x )+\mathcal{L}_{s} (\Omega(x),\varphi(x ))\le (\ge) \ f(x ). 
\end{equation*}
 Moreover $u\in C(\overline \Omega)$ is a \emph{viscosity solution} to problem \eqref{10:17} if is both sub- and supersolution and satisfies the boundary condition $u=0$ on $\partial \Omega$ pointwise.\EEE

  Parabolic case: \EEE
We say that $u\in \USC_b( (0,T)\times\Omega  )$ ($\in \LSC_b( (0,T)\times\Omega  )$) is a \emph{viscosity sub (super) solution} to  the equation 
\[
\partial_tu(t,x) +h(x) u(t,x)+\mathcal{L}_{s} (\Omega(t,x),u(t,x) )= f(t,x) \qquad \mbox{in }  (0,T)\times\Omega
\] \EEE
if, whenever $( t , x )\in (0,T)\times\Omega  $ and $\varphi\in
C^2((0,T)\times{\Omega})$ are such that $u( t , x )=\varphi( t , x )$
and $u(\tau,y)\le \varphi(\tau,y)$ for all $\tau \EEE,y\in(0,T)\times\Omega$, then  
\begin{equation*}
 \partial_t  \varphi( t , x )+ h( x ) \varphi( t , x )+\mathcal{L}_{s} (\Omega(t,x),\varphi( t , x ))\le (\ge) \ f( t , x ). 
\end{equation*}
 Moreover $u\in C([0,T)\times\overline \Omega)$ is a \emph{viscosity solution} to \eqref{parabolic} if it is both sub- and supersolution and satisfies the boundary and initial conditions
  \begin{align}
    \label{parabolic}
 \left\{   \begin{array}{ll}
    %\label{bounin1}
 u(t,x) = 0 & \mbox{on } (0,T)\times\partial\Omega\, ,\\
    u(0,x) = u_0(x) & \mbox{on }   \Omega,
           \end{array}
                      \right.
\end{align}
pointwise.\EEE
\end{defin}

% This definition can be immediately adapted to the elliptic
% case. \EEE Indeed, for any $v:\Omega\mapsto \mathbb R$, we can write,
% with a slight abuse of notation, $v(t,x)=v(x)$ and apply to this
% function the notions of sub and super solutions above. Clearly, any
% smooth function $\varphi (t,x)$ that touches from above (or below)
% $v(t,x)=v(x)$, at a given point $(t_0,x_0)$, must satisfies
% $\partial_t \varphi(t_0,x_0)=0$.

We are now in the position of stating our main results.

\begin{teo}[Well-posedness of the elliptic problem]\label{teide}
Let us assume \eqref{alpha}-\eqref{Sigmadef}, and
\eqref{fcon}. Then, problem \eqref{10:17} admits a unique viscosity
solution $u$. This  satisfies $|u(x)|\le C d(x)^{\eta}$ for some suitable 
small $\eta>0$ and large $C >0$ . 
\end{teo}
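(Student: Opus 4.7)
The plan is to implement the classical Perron method for viscosity solutions of problem \eqref{10:17}, which requires two ingredients: a comparison principle between viscosity subsolutions and supersolutions, and the construction of a subsolution--supersolution pair playing the role of barriers that vanish on $\partial\Omega$. Both ingredients need to be tailored to the specific structure of the operator $\mathcal{L}_s$ in \eqref{10-6bis}.

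First I would construct barriers of the form $\psi_{\pm}(x) = \pm C\,d(x)^{\eta}$ for $\eta \in (0,2s)$ small (in particular $\eta \le \eta_f$) and $C>0$ large. Evaluating $h(x)\psi_+(x) + \mathcal{L}_s(\Omega(x),\psi_+)(x)$, the positive term $h(x)\psi_+(x)$ is, by assumption \eqref{alpha}, bounded below by $\alpha C\,d(x)^{\eta-2s}$, while the nonlocal contribution can be controlled from below by a quantity of lower singular order near $\partial\Omega$ because the kernel is integrable against $d^\eta$ for $\eta$ small. Using \eqref{fcon}, one has $|f(x)| \le C_f\, d(x)^{\eta_f - 2s}$, so if $C$ is chosen large enough (depending on $\alpha$, as predicted by \eqref{barrierintro}) one obtains $h(x)\psi_+(x) + \mathcal{L}_s(\Omega(x),\psi_+)(x) \ge f(x)$, i.e.\ $\psi_+$ is a viscosity supersolution, and symmetrically $\psi_-$ is a subsolution. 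Both vanish on $\partial\Omega$.

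The main obstacle is the comparison principle, precisely because, as discussed after \eqref{bbarles}, the assumption used in \cite{barimb} to implement the doubling-of-variables technique fails for our operator. I would instead follow the route suggested in the paper: starting from a subsolution $u$ and a supersolution $v$ with $u\le v$ on $\partial\Omega$, regularize $u$ by sup-convolution and $v$ by inf-convolution to obtain semiconvex/semiconcave approximations that are still viscosity sub- and supersolutions of suitably perturbed equations. The key point is that, thanks to \eqref{ostationary}, in a ball of radius $\zeta d(x)$ around any interior point $x$ the set $\widetilde{\Omega}(x)$ coincides with the fixed symmetric set $\Sigma$, so the singular part of $\mathcal{L}_s$ is translation invariant near $x$; this allows one to exhibit the sum of a subsolution and a supersolution as a subsolution of a doubled equation, and to conclude by evaluating at a putative interior maximum point of $u-v$, where the strictly positive term $h(x)>0$ provides the needed strict inequality.

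Once comparison and barriers are available, I would define
\[
u(x) = \sup\{\, w(x) : \psi_- \le w \le \psi_+, \ w \text{ is a viscosity subsolution of } \eqref{10:17}\,\},
\]
and verify in the standard Perron fashion that the upper envelope $u^*$ is a bounded viscosity subsolution and the lower envelope $u_*$ is a bounded viscosity supersolution, with $\psi_- \le u_* \le u^* \le \psi_+$, so that both envelopes vanish at $\partial\Omega$. The comparison principle then yields $u^* \le u_*$, which forces $u^* = u_* = u \in C(\overline{\Omega})$ and shows $u$ is a viscosity solution. Uniqueness is a direct consequence of comparison applied to two candidate solutions, and the pointwise bound $|u(x)| \le C\,d(x)^{\eta}$ follows from the barrier inclusion $\psi_- \le u \le \psi_+$.
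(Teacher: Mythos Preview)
Your proposal is correct and follows essentially the same route as the paper: barriers $\pm C\,d(x)^{\eta}$ (Lemma~\ref{barriercone}), comparison via sup/inf-convolution exploiting the local translation invariance \eqref{ostationary} (Corollary~\ref{ellipticdif} leading to Corollary~\ref{ellcomp}), and then the Perron construction. One technical point to sharpen in the barrier step: the nonlocal term $\mathcal{L}_s(\Omega(x),d^{\eta})(x)$ is \emph{not} of lower singular order than $h(x)d(x)^{\eta}$ and the kernel is certainly not integrable against $d^{\eta}$ for small $\eta$; rather, it is of the \emph{same} order $d(x)^{\eta-2s}$ but with a coefficient $g(\eta)\to 0$ as $\eta\to 0$, and it is the symmetry $\Sigma=-\Sigma$ that kills the first-order Taylor term in the principal value and makes this smallness available---this is precisely why $\eta$ must be taken small enough that $g(\eta)<\alpha/2$.
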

%Our second question concerns the first-eigenvalue problem associated to \eqref{10-6}.

\begin{teo}[Well-posedness of the elliptic first-eigenvalue
  problem]\label{mussaka}
Under the same assumption of Theorem \ref{teide}, there exists a
unique  $\overline{\lambda}>0$ \EEE   such that 
\begin{equation}\label{eigenproblem}
\begin{cases}
{h}(x)v (x)+{\mathcal{L}}_s (\Omega(x), v(x))=\overline{\lambda} v (x)\qquad & \mbox{in } \Omega,\\
 v(x) = 0 & \mbox{on }  \partial \Omega,
\end{cases}
\end{equation}
admits a nontrivial (strictly) positive viscosity 
solution. Such solution is unique, up to a multiplicative
constant. Moreover, the first eigenvalue can be
characterized as  $\overline{\lambda}=\sup E$, where
\begin{equation}\label{eq:E}
E=\{\lambda\in\mathbb R \ : \ \exists v\in C(\overline{\Omega}), \ v>0 \mbox{ in } \Omega \ v=0 \mbox{ on } \partial\Omega  \ \  \mbox{such that} \ \ hv+\mathcal{L}_s(v)= \lambda v+f\}.
\end{equation}
and $f\in C(\Omega)$ is any given positive and nonzero function 
satisfying \eqref{fcon}. Note in particular, that the set $E$ is
independent of $f$. 
\end{teo}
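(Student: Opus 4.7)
The plan is to follow the Berestycki--Nirenberg--Varadhan approach of \cite{bere} in its viscosity version, using Theorem \ref{teide} as the basic solvability ingredient together with the refined maximum principle, the interior H\"older regularity of \cite{schwabsil,mou}, and the half-relaxed limit method, all of which are quoted in the Introduction. The proof naturally splits into four steps: non-emptiness and boundedness of $E$, construction of an eigenfunction by approximation, simplicity, and uniqueness of $\overline\lambda$ together with independence of $E$ from $f$.

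First I would check that $E$ is nonempty and bounded. Non-emptiness is immediate: taking $\lambda=0$ in \eqref{refineed}, Theorem \ref{teide} yields the unique viscosity solution $v$ of \eqref{10:17} for the prescribed $f>0$, and the refined maximum principle ensures $v>0$ in $\Omega$, so $0\in E$. For the upper bound, I would compare with a positive eigenfunction of the restricted operator on a ball $B\Subset\Omega$, obtained via the localization procedure coming from \eqref{Sigmadef} (Lemma \ref{localization}); this forces $\sup E<+\infty$. Thus $\overline\lambda:=\sup E\in(0,+\infty)$.

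For the construction of the eigenfunction I would pick $\lambda_n\nearrow\overline\lambda$ with associated positive solutions $v_n$ of the inhomogeneous problem, set $M_n:=\|v_n\|_\infty$, and normalize $\varphi_n:=v_n/M_n$. I expect the dichotomy $M_n\to+\infty$: if $\{M_n\}$ stayed bounded, the interior H\"older estimate of \cite{schwabsil,mou} combined with the boundary barrier of Theorem \ref{teide} would let the half-relaxed-limit procedure produce a positive continuous limit $v_\infty$ solving the inhomogeneous problem at $\overline\lambda$, and a small perturbation around $v_\infty$ (solving $hw+\mathcal L_s(w)-\overline\lambda w = \varepsilon v_\infty$ via Theorem \ref{teide}) would make $\overline\lambda+\varepsilon\in E$, contradicting the supremum. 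Once $M_n\to\infty$, the functions $\varphi_n$ satisfy
\begin{equation*}
h\varphi_n+\mathcal L_s(\Omega(\cdot),\varphi_n)=\lambda_n\varphi_n+f/M_n,\qquad \|\varphi_n\|_\infty=1,\ \varphi_n\geq 0,
\end{equation*}
and the same regularity plus barrier, together with the half-relaxed limit, deliver a nonnegative $\overline\varphi\in C(\overline\Omega)$ vanishing on $\partial\Omega$ and solving \eqref{eigenproblem}. Nontriviality follows because the maximum of $\varphi_n$ is attained at some $x_n\in\Omega$ uniformly away from $\partial\Omega$ (the barrier $|\varphi_n(x)|\leq C\,d(x)^\eta$ depends on $M_n^{-1}$ and therefore does not concentrate at the boundary), so a subsequence yields $\overline\varphi(\overline x)=1$ for some interior $\overline x$. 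The strict positivity of $\overline\varphi$ in $\Omega$ then follows from the strong maximum principle, a byproduct of the refined maximum principle.

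For simplicity and uniqueness of $\overline\lambda$ I would invoke the standard sliding argument: given two positive eigenfunctions $\overline\varphi_1,\overline\varphi_2$ at $\overline\lambda$, set $t^*:=\sup\{t>0:\overline\varphi_1\geq t\overline\varphi_2\}$, so that $w:=\overline\varphi_1-t^*\overline\varphi_2\geq 0$ still satisfies the homogeneous eigenvalue equation; the strong maximum principle forces either $w\equiv 0$ or $w>0$ strictly, and the latter contradicts the maximality of $t^*$. For uniqueness of $\overline\lambda$, a second positive eigenfunction $\psi$ at $\mu\neq\overline\lambda$ would give, via the same sliding, $hw+\mathcal L_s(w)-\mu w=(\mu-\overline\lambda)t^*\overline\varphi$, whose sign rules out $\mu<\overline\lambda$ via the refined maximum principle (right-hand side strictly negative, $w\geq 0$ and zero on the boundary), while $\mu>\overline\lambda$ is excluded by the definition of $\sup E$. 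Independence of $E$ from the particular choice of $f>0$ satisfying \eqref{fcon} then follows: for two admissible data $f_1,f_2$ and any $\lambda<\overline\lambda$, Theorem \ref{teide} and the refined maximum principle produce positive solutions for both forcings, so $E_{f_1}=E_{f_2}$.

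The main obstacle is step two, specifically the compactness and dichotomy for the approximating sequence. In the absence of a condition of L\'evy--It\^o type like \eqref{bbarles}, the interior H\"older regularity of \cite{schwabsil,mou} granted by \eqref{Sigmadef} and the boundary barrier \eqref{barrierintro} must cooperate: the former gives equicontinuity on compact subsets of $\Omega$, the latter both passes the homogeneous boundary condition to the limit and prevents the $L^\infty$-norm from concentrating near $\partial\Omega$, so that the half-relaxed limit yields a genuinely nontrivial continuous eigenfunction. The perturbation step that rules out boundedness of $M_n$ is the other delicate point, as it hinges on Fredholm-type solvability provided by Theorem \ref{teide} applied close to $\overline\lambda$.
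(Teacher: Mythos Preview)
Your overall strategy coincides with the paper's: define $E_f$, show it is a bounded left semiline, take $\lambda_n\nearrow\overline\lambda$ with positive solutions $v_n$, prove the dichotomy $\|v_n\|_\infty\to\infty$, normalize, pass to the limit via half-relaxed limits together with the interior H\"older estimate and the barrier $Qd(x)^\eta$, and conclude simplicity and uniqueness from the refined maximum principle (Lemma~\ref{aap}). On that level the plan is correct and matches the paper.

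There is, however, a real gap that recurs in several places: you repeatedly invoke Theorem~\ref{teide} to solve the shifted equation $h v+\mathcal L_s(\Omega(\cdot),v)-\lambda v=g$ for $\lambda$ near or equal to $\overline\lambda$ (in the perturbation ruling out bounded $M_n$, in ``$\mu>\overline\lambda$ is excluded by the definition of $\sup E$'', and in the $f$-independence of $E$). Theorem~\ref{teide} rests on the coercivity \eqref{alpha}, and $h(x)-\lambda$ fails \eqref{alpha} once $\lambda\ge\alpha/\mathrm{diam}(\Omega)^{2s}$, so none of these calls is justified. The paper supplies the missing solvability through an intermediate result (Theorem~\ref{approximation}): given a positive \emph{supersolution} $u$ at level $\lambda$ with nontrivial forcing $f\ge0$, one iterates $h v_n+\mathcal L_s v_n=\mu v_{n-1}+g$ (each step is Theorem~\ref{teide}), takes half-relaxed limits, and uses Lemma~\ref{aap} against $u$ to force $\overline v=\underline v$; this yields a solution for every $\mu\le\lambda$ and $|g|\le f$. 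For the dichotomy the paper first assumes $f\ge\theta>0$, so that if a bounded limit $v_\infty$ existed one could pick $\epsilon$ with $f\ge\tfrac12 f+\epsilon v_\infty$, making $2v_\infty$ a supersolution at level $\overline\lambda+\epsilon$ with forcing $f$; Theorem~\ref{approximation} then places $\overline\lambda+\epsilon\in E_f$, the contradiction you want. The extra hypothesis $f\ge\theta$ is removed afterwards by comparing $\lambda_f$ with $\lambda_{\max\{f,\theta\}}$. The cases $\mu>\overline\lambda$ and the $f$-independence are handled the same way, feeding an available positive (super)solution into Theorem~\ref{approximation} rather than into Theorem~\ref{teide}.
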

% \MMM The statement of Theorem \ref{mussaka} features the set $E_f$,
% which is specified by the following result. In particular, we prove
% that $E_f $ and is in fact 
% independent of $f$.% \footnote{
 % Non ho ben capito la frase precendente. Il fatto che $E_f$ sia indipendente dalla scelta di f segue dall'unicita' dell'autovalore. Ovvero consideriamo $E_f$ e $E_g$ e  i due relativi sup $\lambda_f$ e $\lambda_g$. Entrambi sono autovalori, ma la dimostrazione del teorema 2.3 prova che sono lo stesso numero. Alternativa: Eliminare il pedice $f$ nella definizione di $E$ e aggiungere dopo il teorema: We point out that the set $E$ does not depend on $f$. In the next result we address existence of solutions for the problem below the resonant condition.\EEE} \EEE
\begin{teo}[ Well-posedness of the elliptic problem below the first eignevalue]\label{helmholtz}
 Under the same assumption of Theorem \ref{teide}, for any
 $\lambda<\overline{\lambda}$, the problem
\begin{equation}\label{helmeq}
\begin{cases}
{h}(x)u (x)\EEE+{\mathcal{L}}_s (\Omega(x),u(x)\EEE)=\lambda u (x)\EEE+ f(x) \qquad & \mbox{in } \Omega,\\
 u(x) = 0 & \mbox{on }  \partial \Omega,
\end{cases}
\end{equation}
admits a unique viscosity solution. In particular, we have 
$(-\infty,\overline\lambda)=E$ where the set $E$ is defined in
\eqref{eq:E}. % \subset (-\infty,\overline\lambda]$% \footnote{
 % Nel Theorem \ref{helmholtz} la soluzione trovata non deve necessariamente essere positiva. Alzi se la $f$ e' negativa la soluzione lo sara' pure. Inoltre dalla dimostrazione del Theorem \ref{mussaka} si deduce che $(-\infty,\overline\lambda)= E_f$. Non so se e' il caso di dirlo esplicitamente da qualche parte.}
\end{teo}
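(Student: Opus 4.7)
The proof combines the refined maximum principle established within Theorem \ref{mussaka}, the downward closure of the set $E$ of admissible parameters from \eqref{eq:E}, and a Perron-type sandwiching argument to handle sign-changing sources. Uniqueness is straightforward: by linearity of the operator $u\mapsto h(x)u+\mathcal{L}_{s}(\Omega(x),u)$, the difference $w=u_1-u_2$ of two viscosity solutions of \eqref{helmeq} is a viscosity solution of the homogeneous problem $hw+\mathcal{L}_{s}(\Omega(x),w)=\lambda w$ with $w=0$ on $\partial\Omega$; since $\lambda<\overline{\lambda}$, the refined maximum principle yields $w\ge 0$, and a symmetric application yields $w\equiv 0$.

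For existence with a positive source it suffices to show $(-\infty,\overline{\lambda})\subseteq E$. Given any $\lambda_0\in E$ witnessed by a positive pair $(v_0,f_0)$ solving $hv_0+\mathcal{L}_{s}(\Omega(x),v_0)=\lambda_0 v_0+f_0$, one rewrites, for any $\lambda'<\lambda_0$,
\[
hv_0+\mathcal{L}_{s}(\Omega(x),v_0)=\lambda' v_0+\bigl[(\lambda_0-\lambda')v_0+f_0\bigr];
\]
the bracketed source is positive, continuous, and inherits \eqref{fcon}, and since $E$ is independent of the chosen positive source (Theorem \ref{mussaka}), $\lambda'\in E$. Combined with $\sup E=\overline{\lambda}$ this gives $(-\infty,\overline{\lambda})\subseteq E$. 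The reverse inclusion amounts to $\overline{\lambda}\notin E$: assuming a witness $v>0$ and nontrivial $f\ge 0$ with $hv+\mathcal{L}_{s}(\Omega(x),v)=\overline{\lambda} v+f$, set $t^*=\sup\{t\ge 0:\ v\ge t\overline{\varphi}\}$ with $\overline{\varphi}$ the principal eigenfunction; the nonnegative function $w=v-t^*\overline{\varphi}$ is then a viscosity supersolution of the eigenvalue equation with nontrivial source $f$ that touches zero. An interior touching point is ruled out by testing $w$ from below with the constant zero function, whose nonlocal evaluation vanishes, together with the strong-maximum propagation coming from nonnegativity of the kernel and the density assumption \eqref{Sigmadef}; a boundary touching point is excluded by comparing the boundary decay of $v$ and $\overline{\varphi}$ via a Hopf-type lower bound. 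Hence $E=(-\infty,\overline{\lambda})$.

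For a possibly sign-changing $f\in C(\Omega)$ satisfying \eqref{fcon}, let $U>0$ denote the viscosity solution of \eqref{helmeq} with source $|f|+1$, which is positive and inherits \eqref{fcon}, whose existence is provided by the preceding step. Then $U$ is a viscosity supersolution and $-U$ a viscosity subsolution of \eqref{helmeq} with source $f$, both vanishing on $\partial\Omega$; invoking the Perron method exactly as in Theorem \ref{teide}, with comparison supplied by the refined maximum principle, produces a viscosity solution sandwiched between $-U$ and $U$. The main obstacle throughout is the refined maximum principle itself---a nonlocal analogue of the Berestycki-Nirenberg-Varadhan principle at the heart of Theorem \ref{mussaka}---whose proof absorbs the bulk of the technical work; once it is in hand, the argument above is essentially a bookkeeping exercise exploiting linearity, downward closure of $E$, and a standard Perron sandwich.
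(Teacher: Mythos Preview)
Your overall strategy is sound and close to the paper's, but there is one genuine gap and one place where you take a different (valid) route.

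\textbf{The gap: the argument for $\overline{\lambda}\notin E$.} Your touching argument sets $t^*=\sup\{t\ge 0:\ v\ge t\overline{\varphi}\}$ and then splits into an interior-touching case and a boundary case. The interior case is fine via the strong maximum principle (Lemma~\ref{strongmax}). The boundary case, however, is where the argument breaks: you invoke ``a Hopf-type lower bound'' to rule out $v/\overline{\varphi}\to 0$ along a sequence approaching $\partial\Omega$, but no such Hopf lemma is available here. The only boundary control in the paper's toolbox is the upper barrier $d(x)^\eta$ from Lemma~\ref{barriercone}; there is no matching lower bound for positive supersolutions, and for these very general, $x$-dependent operators such a bound is genuinely delicate. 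The paper sidesteps this entirely: once you have Lemma~\ref{aap}, you simply observe that a hypothetical $v>0$ solving $hv+\mathcal{L}_s(\Omega(x),v)=\overline{\lambda}v+f$ with $f\ge 0$ nontrivial is exactly the ``strict supersolution'' in that lemma, while the eigenfunction $\overline{\varphi}$ is a subsolution of the homogeneous equation at $\overline{\lambda}$; Lemma~\ref{aap} then forces $\overline{\varphi}\le 0$, a contradiction. (This is also how the paper handles it inside the proof of Theorem~\ref{mussaka}, Step~1.) Replace your Hopf argument by this one-line application.

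\textbf{A different but valid route for sign-changing $f$.} For existence with general $f$, the paper does not run Perron again. Instead it notes that since $\lambda\in E_{|f|}$ there is a positive $v$ solving $hv+\mathcal{L}_s(\Omega(x),v)=\lambda v+|f|$, and then invokes Theorem~\ref{approximation} (the iterative scheme $v_0=0$, $hv_n+\mathcal{L}_s(v_n)=\lambda v_{n-1}+f$, with half-relaxed limits and Lemma~\ref{aap} to close). Your Perron sandwich with barriers $\pm U$ and comparison supplied by Lemma~\ref{aap} (via Corollary~\ref{ellipticdif} for the difference of the upper and lower envelopes) also works, and is arguably more direct; just make explicit that the comparison step in Perron is exactly an application of Lemma~\ref{aap} to $u^*-u_*$ against the strict supersolution $U$.

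\textbf{Minor point.} When you write ``by linearity of the operator\ldots the difference $w=u_1-u_2$ is a viscosity solution of the homogeneous problem,'' this is not automatic for viscosity solutions; it requires the sup/inf-convolution machinery packaged in Corollary~\ref{ellipticdif}. You allude to this at the end, but the uniqueness paragraph should cite it explicitly.
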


Let us now turn to the parabolic problem. 
% Let us now focus on the parabolic version of \eqref{10-6}.
Before stating our results, we present a time-depending
generalization of the hypothesis of Theorem \ref{teide}. We
assume
% For $s\in(0,1)$ let us set
% \be\label{fracschma}
% \mathcal{L}_{s} (\Omega(t,x),u(t,x))=p.v.\int_{y\in \Omega(t,x)}  \frac{u(t,x)-u(t,y)}{|x-y|^{N+2s}} dy,
% \ee
% where, for any $(t,x)\in (0,\infty)\times \Omega$,
the set valued function $(t,x)\mapsto\Omega(t,x)\subset\Omega$ to fulfill \EEE
the following assumptions
\begin{align}\label{contpar}
& \quad \forall (t,x) \in (0,\infty)\times\Omega:  \ \ \ \lim_{(\tau,y)\to (t,x)}|\Omega(\tau,y)\triangle\Omega(t,x)|=0,\\
 & \quad \forall \ T >0, \  \exists \
\zeta\in \big(0,1/2\big) \ :\  \forall (t,x) \in (0,T)\times\Omega \ \ \  \Omegat(t,x)\cap B_{r}(0)=\Sigma\cap B_{r}(0),\label{omegax}
\end{align}
 for all $r\le \zeta d(x)$ and $\Sigma$ as in \eqref{Sigmadef}. Recall that $\tilde{\Omega}(t,x)=\{z\in\mathbb{R}^N \ : \ x-z\in \Omega(t,x)\}$. Moreover, we 
let $h\in C((0,\infty)\times\Omega)$ satisfy for $0<\alpha\le\beta$
\begin{equation}\label{alphap}  \frac{\alpha}{d(x)^{2s}}\le h(t,x)\le  \frac{\beta}{d(x)^{2s}};
\end{equation}
and we assume that $f\in C((0,\infty)\times\Omega)$, $u_0\in
C(\Omega)$, and that there exists $ \eta_1\in(0,2s)$ such that
\be\label{fconintro}
|f(x,t)|d(x)^{2s-\eta_1}\le C,
\ee
\be\label{u0con}
|u_0(x)|d(x)^{-\eta_1}\le C.
\ee

\begin{teo}[Well-posedness of the parabolic
  problem] \label{existence}
  Let us fix $T\in(0,\infty]$. Under assumptions \eqref{Sigmadef}, 
  \eqref{contpar}-\eqref{u0con} there exists a unique viscosity solution
  \EEE $u\in C([0,T)\times\overline\Omega)\cap
  L^{\infty}([0,T)\times \Omega)$  of \EEE
  \begin{align}
    \label{parabolic}
 \left\{   \begin{array}{ll}
\partial_tu(t,x) +h(t , x) u(t,x)+\mathcal{L}_{s} (\Omega(t,x),u(t,x) )= f(t,x) \qquad &\mbox{in }  (0,T)\times\Omega,\\
    %\label{bounin1}
 u(t,x) = 0 & \mbox{on } (0,T)\times\partial\Omega\, ,\\
    u(0,x) = u_0(x) & \mbox{on }   \Omega.
           \end{array}
                      \right.
\end{align}
%Moreover such a solution is unique.
\end{teo}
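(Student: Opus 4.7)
The plan is to prove Theorem \ref{existence} by Perron's method, paralleling the argument used for the elliptic problem in Theorem \ref{teide}. The three ingredients are a parabolic comparison principle (which also gives uniqueness), a pair of matched barriers at the parabolic boundary $\{0\}\times\Omega\cup(0,T)\times\partial\Omega$, and the standard envelope construction. Linearity of the operator in $u$ means no further structural ingredient is required beyond the elliptic ones.

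For parabolic comparison, let $u\in \USC_b$ be a subsolution and $v\in \LSC_b$ a supersolution with $u\le v$ on the parabolic boundary. I would first replace $u$ by the strict subsolution $u_\delta:=u-\delta/(T-t)-\delta t$, which pushes any hypothetical positive supremum of $u-v$ into the open parabolic cylinder. The lack of translation invariance of $\mathcal{L}_{s}(\Omega(t,x),\cdot)$ rules out the doubling-of-variables approach; following the elliptic strategy sketched by the authors, I would instead regularize $u_\delta$ by sup-convolution and $v$ by inf-convolution in $x$ (with $t$ treated as a parameter). The resulting functions remain viscosity sub- and supersolutions of perturbed equations and are semiconvex/semiconcave in $x$, so at a joint maximum $(t_0,x_0)$ of their difference the pointwise inequalities become legitimate: $\partial_t(u_\delta-v)(t_0,x_0)\ge 0$, $h(t_0,x_0)(u_\delta-v)(t_0,x_0)>0$ by \eqref{alphap}, and $\mathcal{L}_{s}(\Omega(t_0,x_0),u_\delta-v)(t_0,x_0)\ge 0$ because the integrand is nonnegative at a maximum, the symmetric part of assumption \eqref{omegax} cancelling the principal singularity exactly as in the elliptic argument. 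Summing these inequalities against the strict-subsolution surplus $\delta/(T-t)^2+\delta$ produces a contradiction; releasing the sup/inf-convolution parameters and then $\delta$ closes the argument.

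The barriers are inherited from the elliptic ones: set $\eta^\star:=\min(\eta,\eta_1)$, where $\eta>0$ is the exponent produced by Theorem \ref{teide}. Using the singular lower bound in \eqref{alphap} together with \eqref{fconintro}-\eqref{u0con}, one checks that $\bar U(t,x):=Kd(x)^{\eta^\star}$ is a time-independent supersolution for $K>0$ large enough, since the dominant term $h\bar U\ge cK d(x)^{\eta^\star-2s}$ absorbs both $f$ (via \eqref{fconintro}) and the contribution of $\mathcal{L}_{s}(\Omega(t,x),\bar U)$ uniformly in $t\in(0,T)$. Setting $\underline U:=-\bar U$ gives $\underline U\le u_0\le\bar U$ at $t=0$ and $\underline U=\bar U=0$ on the lateral boundary. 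Perron's method then defines
\[
u(t,x):=\sup\{w(t,x):\underline U\le w\le\bar U,\ w\in \USC_b \text{ is a viscosity subsolution}\},
\]
and the standard upper/lower envelope argument, adapted from the elliptic proof, yields that $u^*$ is a bounded USC subsolution and $u_*$ a bounded LSC supersolution with $\underline U\le u_*\le u^*\le\bar U$. Squeezing by the barriers ensures that the initial and lateral boundary data are attained continuously, so the parabolic comparison principle applied to $u^*$ and $u_*$ yields $u=u^*=u_*\in C([0,T)\times\overline\Omega)$, the desired viscosity solution; the case $T=\infty$ follows from the same $T$-independent barrier construction.

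The main technical obstacle is the comparison step: the singular nonlocal kernel combined with the joint $(t,x)$-dependence of $\Omega(t,x)$ rules out a direct application of the parabolic Crandall-Ishii lemma, and the inf/sup-convolution detour must be implemented carefully, exploiting \eqref{contpar} to approximate $\Omega(t_0,\cdot)$ near $x_0$ and the uniform-in-$t$ symmetry \eqref{omegax} to cancel the principal singularity of the integral.
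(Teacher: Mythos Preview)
Your overall architecture (parabolic comparison, time-independent barrier $\pm K d(x)^{\eta^\star}$, Perron envelope) matches the paper's, but there is a genuine gap in the attainment of the initial datum. Your barriers satisfy $\underline U(0,x)=-K d(x)^{\eta^\star}$ and $\bar U(0,x)=K d(x)^{\eta^\star}$, which merely bracket $u_0$; they do not squeeze to it. Consequently the Perron envelope you define only yields $-K d(x)^{\eta^\star}\le u_*(0,x)\le u^*(0,x)\le K d(x)^{\eta^\star}$, not $u_*(0,x)=u_0(x)=u^*(0,x)$. The paper handles this in two separate ways: it includes the constraint $w(0,\cdot)\le u_0$ in the definition of the Perron class (forcing $u^*(0,\cdot)\le u_0$), and it builds a \emph{local} subsolution barrier at each interior point $\bar x$, of the form $\tilde w(t,x)=a\,\eta((\bar x-x)/\delta_\epsilon)-b-K\delta_\epsilon^{-2s}t$, to show $u_*(0,\bar x)\ge u_0(\bar x)$. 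Without such a local construction your argument does not recover the initial condition.

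A second, more technical point concerns the comparison step. You regularize by sup/inf-convolution in $x$ only, ``with $t$ treated as a parameter'', and then assert that at the interior maximum the inequality $\partial_t(u_\delta-v)(t_0,x_0)\ge 0$ holds pointwise. Convolution in $x$ alone gives semiconvexity/semiconcavity in $x$ and hence $C^{1,1}$ spatial regularity at the touching point, but confers no differentiability in $t$, so splitting the time derivative between $u_\delta^\epsilon$ and $v_\epsilon$ is not justified. The paper performs the sup/inf-convolution jointly in $(t,x)$ (see the definition of $u^\epsilon$ and Lemmas~\ref{pointsub}--\ref{pointsup}); this makes both regularizations $C^{1,1}$ in all variables at the contact point and allows the difference $w=u_\delta-v$ to be identified as a viscosity subsolution of the linear equation (Lemma~\ref{differenza}), after which testing against the constant $M$ at the interior maximum gives the contradiction $h(t_0,x_0)M\le 0$. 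You should either convolve in both variables or explain precisely how the time derivative is controlled.
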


Finally, we address the asymptotic behavior of the solution provided
by Theorem \ref{existence} as $T \to \infty$. In order to do 
that, we require that all the time-dependent data in  \eqref{parabolic} suitably converge to their stationary
counterparts in \eqref{10:17}. In particular, we assume that, for some $\eta_2\in(0,2s)$ and $\lambda<\overline{\lambda}$,
\begin{align}\label{decaydata}
&\left(|f(t,x)-f(x)|+|h(t,x)-h(x)|\right)d(x)^{2s-\eta_2}e^{\lambda
  t}\le C_1,\\
  \label{decayomega}
&|\Omega(t,x)\Delta\Omega(x)|d(x)^{-N-\eta_2}e^{\lambda t}\le C_2.
\end{align}
Assumption \eqref{omegax} must also be
strengthen \EEE as follows
\be\label{threeprime}
  \exists \ \zeta\in \big(0,1/2\big),\  \forall (t,x) \in I\times\Omega: \ \ \  \Omegat(t,x)\cap B_{r}(0)=\Sigma\cap B_{r}(0)
\EEE .
\ee
\begin{teo}[Long-time behavior] \label{lalaguna}
Let us assume \eqref{alpha}-\eqref{ostationary},
\eqref{contpar}-\eqref{u0con}, and
\eqref{decaydata}-\eqref{threeprime}. Let $u$ \EEE be the unique
viscosity solution of the parabolic problem \eqref{parabolic}
on $(0,\infty)$, $v$ be the unique solution
of the elliptic  problem \eqref{10:17},  and let $\lambda$ fulfill \eqref{decaydata}.
 Then, there exists $C=C(\lambda)>0 $ such that $
|u(x,t)- v(x)|\le C d(x)^{\eta} e^{-\lambda t}$ for
some small $\eta>0$.
\end{teo}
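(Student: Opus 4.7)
The plan is to build a time-decaying barrier for \eqref{parabolic} and apply the parabolic comparison principle built into the uniqueness part of Theorem~\ref{existence}. Let $v$ be the elliptic solution from Theorem~\ref{teide}. Since $\lambda<\overline\lambda$, Theorem~\ref{helmholtz} applied to the admissible datum $g(x):=d(x)^{\eta_2-2s}$ (which satisfies \eqref{fcon} with $\eta_f=\eta_2$) yields a unique viscosity solution $\psi\in C(\overline\Omega)$ of
\[
h(x)\psi+\mathcal{L}_s(\Omega(x),\psi)-\lambda\psi=g \text{ in }\Omega,\qquad \psi=0 \text{ on }\partial\Omega.
\]
Because $g>0$ and $\lambda\in E$, the characterization \eqref{eq:E} forces $\psi>0$ in $\Omega$; the $C\,d(x)^\eta$-type barrier used in the Perron scheme for Theorem~\ref{teide}, together with an analogous lower barrier, then yields the two-sided estimate $c\,d(x)^\eta\le\psi(x)\le C\,d(x)^\eta$ for some small $\eta\in(0,\min\{\eta_1,\eta_2\})$ depending on $\lambda$.

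Set $\Psi(t,x):=Ke^{-\lambda t}\psi(x)$ with $K>0$ to be chosen, and consider $w^\pm:=v\pm\Psi$. A direct computation using the elliptic equations satisfied by $v$ and $\psi$ yields
\[
\partial_t w^++h(t,x)w^++\mathcal{L}_s(\Omega(t,x),w^+)-f(t,x)=Ke^{-\lambda t}\bigl(g(x)+\mathcal{E}_\psi(t,x)\bigr)+\mathcal{E}_v(t,x),
\]
where $\mathcal{E}_\psi=[h(t,x)-h(x)]\psi+\mathcal{L}_s(\Omega(t,x)\triangle\Omega(x),\psi)$ and $\mathcal{E}_v=[h(t,x)-h(x)]v+\mathcal{L}_s(\Omega(t,x)\triangle\Omega(x),v)-[f(t,x)-f(x)]$. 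By \eqref{threeprime} the symmetric difference $\Omega(t,x)\triangle\Omega(x)$ lies in $\Omega\setminus B_{\zeta d(x)}(x)$, so the kernel is bounded there by $(\zeta d(x))^{-N-2s}$; combined with the measure bound \eqref{decayomega}, the $L^\infty$-bounds on $\psi$ and $v$, and the pointwise estimates \eqref{decaydata} on $h(t,x)-h(x)$ and $f(t,x)-f(x)$, one obtains $|\mathcal{E}_\psi|+|\mathcal{E}_v|\le C\,e^{-\lambda t}g(x)$. Taking $K$ large enough, $Ke^{-\lambda t}g$ then absorbs the perturbations on all of $(0,\infty)\times\Omega$, so $w^+$ is a viscosity supersolution of \eqref{parabolic}; the symmetric computation produces $w^-$ as a subsolution.

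Matching the data is direct: $w^\pm$ vanish on $\partial\Omega$, and at $t=0$ the bounds $|u_0(x)|\le Cd(x)^{\eta_1}$ from \eqref{u0con}, $|v(x)|\le Cd(x)^\eta$ from Theorem~\ref{teide}, and $\psi(x)\ge cd(x)^\eta$ yield $|u_0-v|\le K\psi$ on $\Omega$ after a final enlargement of $K$, using $\eta\le\eta_1$. The parabolic comparison principle embedded in the proof of Theorem~\ref{existence} then produces $w^-\le u\le w^+$, whence $|u(t,x)-v(x)|\le Ke^{-\lambda t}\psi(x)\le C(\lambda)\,e^{-\lambda t}d(x)^\eta$, as claimed. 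The most delicate step is to secure that the universal constant in $|\mathcal{E}_\psi|+|\mathcal{E}_v|\le Ce^{-\lambda t}g$ is strictly smaller than the leading coefficient of $g$, so that absorption by $K$ really closes at $t=0$; this requires the finer split $\mathcal{L}_s(\Omega(t,x)\triangle\Omega(x),\psi)=\psi(x)\int|x-y|^{-N-2s}dy-\int\psi(y)|x-y|^{-N-2s}dy$, in which the first piece gains an extra $d(x)^\eta$-factor from $\psi(x)\le Cd(x)^\eta$, together with the spatial concentration of $\Omega(t,x)\triangle\Omega(x)$ away from the singularity encoded in \eqref{decayomega}.
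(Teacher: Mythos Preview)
Your overall strategy---build a separable decaying barrier $Ke^{-\lambda t}\psi(x)$ with $\psi$ the solution of a shifted elliptic problem and then trap $u$ between $v\pm Ke^{-\lambda t}\psi$---is the same idea as the paper's. The paper organizes it in two steps: first Lemma~\ref{differenza} (the sup/inf-convolution machinery) is applied to the pair $(u,v)$ with different coefficients $(h(t,x),\Omega(t,x))$ versus $(h(x),\Omega(x))$, yielding a genuine viscosity inequality for $w=u-v$ with right-hand side $\tilde f$; then Theorem~\ref{totoinfty} compares $w$ with the separable barrier $e^{-\lambda t}\varphi_\lambda$, where the separable structure lets a parabolic test function be rescaled into an elliptic one for $\varphi_\lambda$.

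There is, however, a real gap in your write-up at the point where you say ``a direct computation using the elliptic equations satisfied by $v$ and $\psi$ yields \ldots''. Neither $v$ nor $\psi$ is known to be smooth; they are only continuous viscosity solutions, so you cannot evaluate $\mathcal{L}_s(\Omega(t,x),v)$ or $\mathcal{L}_s(\Omega(t,x),\psi)$ pointwise, let alone add the two equations. The object $w^+=v+Ke^{-\lambda t}\psi$ is \emph{not} separable in $(t,x)$, so the trick used in Theorem~\ref{totoinfty} (a test function for $e^{-\lambda t}\varphi_\lambda$ becomes, after multiplying by $e^{\lambda t}$, a test function for $\varphi_\lambda$) does not apply to $w^+$: a smooth $\phi$ touching $w^+$ from below at $(t_0,x_0)$ does not produce test functions for $v$ and $\psi$ separately. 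To make your argument rigorous you would have to invoke something like Lemma~\ref{differenza} or Corollary~\ref{ellipticdif} to justify that sums/differences of viscosity (sub/super)solutions are again viscosity (sub/super)solutions, and then a second time to pass from the stationary coefficients $(h(x),\Omega(x))$ to the time-dependent ones $(h(t,x),\Omega(t,x))$; this is exactly the content the paper packages into Lemma~\ref{differenza} and its corollary.

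A second, independent gap is the claimed lower bound $\psi(x)\ge c\,d(x)^\eta$. The barrier $d(x)^\eta$ from Lemma~\ref{barriercone} is a \emph{super}solution, which via comparison gives only the upper estimate $\psi\le C d(x)^\eta$; nothing in the paper provides a Hopf-type lower bound of the form $\psi\ge c\,d(x)^\eta$. Without it you cannot absorb $|u_0-v|\le C d(x)^\eta$ into $K\psi$ at $t=0$. (The paper sidesteps this in Theorem~\ref{totoinfty} by assuming the initial datum has compact support, so that $\varphi_\lambda>0$ on the support suffices.)
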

%
%\begin{teo} Let us assume that $h(t,x)$ and  $h(x)$ satisfy \eqref{alphap} and \eqref{alpha} respectively, and that $\{\Omega(t,x)\}, \{\Omega(x)\}$  satisfy \eqref{omegax} and \eqref{ostationary} respectively. Let $u(x,t)$ the unique viscosity solution to \eqref{parabolic}-\eqref{bounin1} and $v(x)$ the unique solution to \eqref{10:17}. Let us assume moreover that there exist constant $C_1, C_2$  and $\eta>0$ such that
%\[
%\left(|f(t,x)-f(x)|+|h(t,x)-h(x)|\right)d(x)^{2s-\eta}e^{\lambda t}\le C_1,
%\]
%\be\label{decayomega}
%|\Omega(t,x)\Delta\Omega(x)|d(x)^{-N-\eta}e^{\lambda t}\le C_2,
%\ee
%for some $\lambda<\overline{\lambda}$.
%Then $u(x,t)\to v(x)$ as $t\to\infty$, uniformly in $\Omega$.
%\end{teo}
%

\section{Preliminary material}

We collect in this section some preliminary lemmas, which will be
used in the proofs later on.
\subsection{Background on viscosity theory}
In the following, we will often make use of the continuity of the integral operator with respect to a suitable convergence of its variables. We state this property in its full generality, in order to be able to apply it in different contests throughout the paper.
\begin{lemma}[Continuity of the integral operator]\label{continuity}
Let us consider a sequence of points $x_n\to\bar x\in\Omega$ and a family of bounded sets $\Theta(x_n), \Theta(\bar x)$ such that $\chi_{\Theta(x_n)}\to \chi_{\Theta(\bar x)}$ almost everywhere and, for some $\delta>0$,
\[
 \ \tilde{\Theta}(x_n)\cap B_{r}(0)=\Sigma\cap B_{r}(0) \quad \mbox{ for all } r\le \delta\mbox{ and } n>0,
\]with $\Sigma$ as in \eqref{Sigmadef}. Assume moreover that $\phi_n\to\phi$ pointwise with
\be\label{c11}
\begin{split}
\left|\phi_n(x_n)-\phi_n(x_n+z)-q_n \cdot z\right|&\le C|z|^2\\
\left|\phi(\bar x)-\phi(\bar x+z)-q \cdot z\right|&\le C|z|^2
\end{split} \ \ \ \ \ \  \mbox{ for } \ \ \ |z|\le \delta,
\ee
for some $q_n, q\subset \mathbb{R}^N$  and $C$ a positive constant that does not depend on $n$. Then
\[
\mathcal{L}_{s} (\Theta(x_n),\phi_n(x_n))\to \mathcal{L}_{s} ( \Theta(\bar x),\phi(\bar x)).
\]
\end{lemma}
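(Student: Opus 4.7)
The plan is to split the integral into a near-singularity part on $B_\delta(0)$ and a tail on $\mathbb{R}^N \setminus B_\delta(0)$ and apply dominated convergence separately to each, using the symmetry $\Sigma=-\Sigma$ to tame the principal value.

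After changing variables $z = x_n - y$, so that $\tilde{\Theta}(x_n) = \{z : x_n - z \in \Theta(x_n)\}$, I would write
\[
\mathcal{L}_{s}(\Theta(x_n),\phi_n(x_n)) = I_n^{\mathrm{near}} + I_n^{\mathrm{far}},
\]
with $I_n^{\mathrm{near}}$ integrated over $\tilde{\Theta}(x_n)\cap B_\delta(0)$ and $I_n^{\mathrm{far}}$ over $\tilde{\Theta}(x_n)\setminus B_\delta(0)$. By hypothesis the near domain equals $\Sigma\cap B_\delta(0)$ for every $n$ and for the limit, so the two near integrations are on the same set. Using $\Sigma = -\Sigma$, I symmetrize to obtain
\[
I_n^{\mathrm{near}} = \frac{1}{2}\int_{\Sigma\cap B_\delta(0)} \frac{2\phi_n(x_n)-\phi_n(x_n-z)-\phi_n(x_n+z)}{|z|^{N+2s}}\,dz,
\]
so the odd contribution $\pm q_n\cdot z$ coming from \eqref{c11} cancels exactly and the principal value becomes an absolutely convergent Lebesgue integral. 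The quadratic remainder in~\eqref{c11} bounds the numerator uniformly by $2C|z|^2$, yielding the $n$-independent integrable majorant $2C|z|^{2-N-2s}$ on $B_\delta(0)$, since $2s<2$. Pointwise a.e.\ convergence of the symmetrized integrand follows from the pointwise convergence $\phi_n\to\phi$ together with the equi-quadratic bound~\eqref{c11} (which also makes $\phi$ continuous at $\bar{x}$ and controls the error produced by the shift $x_n\to\bar x$). Dominated convergence then gives $I_n^{\mathrm{near}}\to I^{\mathrm{near}}$.

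For $I_n^{\mathrm{far}}$, I would first observe that the bounded sets $\Theta(x_n)$ all lie in a common ball $B_R$, so the translated domains $\tilde{\Theta}(x_n)$ remain in a common bounded region. On $\{|z|\ge \delta\}$ the kernel is bounded by $\delta^{-(N+2s)}$, and the uniform boundedness of $\phi_n$ on $B_R$ then yields the integrable majorant $2\sup_n\|\phi_n\|_{L^\infty(B_R)}\delta^{-(N+2s)}\chi_{B_R}$. The assumption $|\Theta(x_n)\triangle\Theta(\bar x)|\to 0$ combined with Lebesgue continuity of translation for the fixed bounded set $\Theta(\bar x)$ yields $|\tilde{\Theta}(x_n)\triangle\tilde{\Theta}(\bar x)|\to 0$, hence $\chi_{\tilde{\Theta}(x_n)}\to\chi_{\tilde{\Theta}(\bar x)}$ in $L^1$, and therefore a.e.\ along a subsequence; together with pointwise convergence of $\phi_n(x_n-z)\to \phi(\bar x-z)$, dominated convergence delivers $I_n^{\mathrm{far}}\to I^{\mathrm{far}}$. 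Uniqueness of the limit upgrades subsequential convergence to full sequential convergence.

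The delicate step is the symmetrization near the origin: without the stationary hypothesis $\tilde{\Theta}(x_n)\cap B_\delta(0) = \Sigma\cap B_\delta(0)$ one could not cancel the leading odd term in the Taylor expansion, and the principal value would fail to be uniformly absolutely convergent on $B_\delta(0)$. This is exactly the role of assumption~\eqref{ostationary} (and its parabolic analogue~\eqref{omegax}) in the paper. A secondary bookkeeping point is transferring the pointwise convergence $\phi_n\to\phi$ to the shifting base points $x_n\to\bar x$; this is routine via the equi-quadratic bound~\eqref{c11} as soon as the vectors $\{q_n\}$ are bounded, which is the case in every application below, where $q_n$ is the gradient of a test function at a converging point inside the bounded domain $\Omega$.
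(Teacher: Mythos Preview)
Your proof is correct and shares the essential mechanism with the paper's: both exploit that $\tilde\Theta(x_n)\cap B_r(0)=\Sigma\cap B_r(0)$ with $\Sigma=-\Sigma$ so that the first-order term $q_n\cdot z$ drops out and the near contribution is controlled by the uniform quadratic remainder, while the far contribution is nonsingular and passes to the limit by the convergence of the sets and of $\phi_n$. The organization differs slightly. You split once at the fixed radius $\delta$ and invoke dominated convergence on both pieces; the paper instead splits at a variable radius $r<\delta$, bounds each near integral by $C\,\omega_N(2-2s)^{-1}r^{2-2s}$ \emph{without} ever asserting convergence of the near integrand, passes the far part to the limit for fixed $r$, and only then sends $r\to 0$. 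The payoff of the paper's two-parameter argument is that it sidesteps the need to verify pointwise convergence in $z$ of the symmetrized second difference $2\phi_n(x_n)-\phi_n(x_n+z)-\phi_n(x_n-z)$, a point you correctly flag as depending on transferring the pointwise convergence $\phi_n\to\phi$ across the moving base points $x_n\to\bar x$ (and on boundedness of $q_n$); your route is more direct but must handle that transfer for the near piece as well as the far one.
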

\begin{proof}
For any $r<\delta$, we can decompose the integral operator as follows
\be \label{eq1}
\mathcal{L}_{s} (\Theta(x_n),\phi_n(x_n))=\mathcal{L}_{s} (\Theta(x_n)\setminus B_{ r}(x_n),\phi_n(x_n))+\mathcal{L}_{s} (\Theta(x_n)\cap B_{ r}(x_n),\phi_n(x_n)).
\ee
Notice that the first integral in the right-hand side of \eqref{eq1}
is nonsingular and, for any fixed $r$, it readily  passes to
the limit as $n\to\infty$, thanks to the convergence of $\Theta(x_n)$ and $\phi_n$. Instead, the second one has to be meant in the principal value sense
\begin{align*}
\mathcal{L}_{s} (\Theta(x_n)\cap B_{ r}(x_n)&,\phi_n(x_n))\\
=\lim_{\rho\to0}\int_{\rho\le|z|\le  r}&\frac{\phi_n(x_n)-\phi_n(x_n+z)}{|z|^{N+2s}}\chi_{\Sigma}dz=\lim_{\rho\to0}\int_{\rho\le|z|\le  r}\frac{\phi_n(x_n)-\phi_n(x_n+z)-q_n\cdot z}{|z|^{N+2s}}\chi_{\Sigma}dz,
\end{align*}
where we have used that $\Sigma=-\Sigma$. Thanks to assumption \eqref{c11} we get
\[
|\mathcal{L}_{s} (\Theta(x_n)\cap B_{ r}(x_n),\phi_n(x_n))|\le C\frac{\omega_N}{2-2s}  r^{(2-2s)}.
\]
Being a similar computation valid for $\mathcal{L}_{s} ( \Theta(\bar x),\phi(\bar x))$, we get that
\[
|\mathcal{L}_{s} (\Theta(x_n),\phi_n(x_n))- \mathcal{L}_{s} ( \Theta(\bar x),\phi(\bar x))|\le |\mathcal{L}_{s} (\Theta(x_n)\setminus B_{ r},\phi_n(x_n))- \mathcal{L}_{s} ( \Theta(\bar x)\setminus B_{ r},\phi(\bar x))|+2C\frac{\omega_N}{2-2s}  r^{(2-2s)}.
\]
The assertion follows by taking the limit in the 
inequality above, %It gives us the required result to take the limit of
% the inequality above at
first as $n\to\infty$ and then as $ r\to 0$.
\end{proof}

Now we provide a suitably localized, equivalent definition of
viscosity solutions, which will turn out useful in proving the comparison
Lemma \ref{timecomp} below. Such \EEE equivalence is
already known (see, for instance, \cite{barimb}). For
completeness, we give here a statement and a proof \EEE in the 
elliptic \EEE case.

\begin{lemma}[Equivalent definition]\label{def2}
We have that \EEE $u\in \USC_b(  \Omega  )$ ($\in \LSC_b(  \Omega
)$) is a viscosity subsolution (supersolution, respectively) to the equation in  \eqref{10:17} 
if and only if, \EEE whenever $x_0 \EEE\in \Omega  $
and $\varphi\in  C^2(\Omega )$ are such that $u(  x_0 \EEE
)=\varphi( x_0 \EEE )$ and $u( y)\le \varphi( y)$ for all $y \in \Omega$, then for all $B_r( x_0 \EEE )\subset \Omega$ the function
\begin{equation}\label{16:57}
 \varphi_r(x)=
\begin{cases}
\varphi(x) \qquad & \mbox{in } B_r( x_0  ),\\
 u(x) & \mbox{otherwise, }
\end{cases}
\end{equation}
satisfies
\begin{equation}\label{serve}
  h( x_0 \EEE ) \varphi_r(  x_0 \EEE )+\mathcal{L}_{s} (\Omega( x_0 \EEE),\varphi_r(  x_0 \EEE
   ))\le (\ge) \ f(   x_0 \EEE ). \EEE
\end{equation}
 A similar result hold in the parabolic case. 
%Parabolic case: \EEE
%We have that $u\in \USC_b( (0,T)\times\Omega  )$ ($\in \LSC_b( (0,T)\times\Omega  )$) is a viscosity subsolution (supersolution, respectively) to the equation in  \eqref{parabolic}
%if and only if, whenever $( t_0 , x_0 )\in (0,T)\times\Omega  $ and $\varphi\in
%C^2((0,T)\times{\Omega})$ are such that $u( t_0 , x_0 )=\varphi( t_0 , x_0 )$
%and $u(\tau,y)\le \varphi(\tau,y)$ for all $\tau \EEE,y\in(0,T)\times\Omega$, then for all $B_r( x_0 \EEE )\subset \Omega$ the function
%\begin{equation}\label{16:57}
% \varphi_r=
%\begin{cases}
%\varphi \qquad & \mbox{in } B_r( x_0  )\times(0,T),\\
% u & \mbox{otherwise, }
%\end{cases}
%\end{equation}
%satisfies 
%\begin{equation*}
% \partial_t  \varphi( t , x )+ h( x ) \varphi( t , x )+\mathcal{L}_{s} (\Omega(t,x),\varphi( t , x ))\le (\ge) \ f( t , x ). 
%\end{equation*}
% Moreover $u\in C([0,T)\times\overline \Omega)$ is a \emph{viscosity solution} to \eqref{parabolic} if it is both sub- and supersolution and satisfies the boundary and initial conditions
%  \begin{align}
%    \label{parabolic}
% \left\{   \begin{array}{ll}
%    %\label{bounin1}
% u(t,x) = 0 & \mbox{on } (0,T)\times\partial\Omega\, ,\\
%    u(0,x) = u_0(x) & \mbox{on }   \Omega,
%           \end{array}
%                      \right.
%\end{align}
%pointwise.\EEE
\end{lemma}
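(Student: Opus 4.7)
I would split the proof into the two implications, treating the elliptic subsolution case in detail (the supersolution case is symmetric, and the parabolic case reproduces the same argument with time frozen). The reverse direction ($\Leftarrow$) is a direct monotonicity observation: given $\varphi\in C^2(\Omega)$ touching $u$ from above at $x_0$ and any $B_r(x_0)\subset \Omega$, the replacement $\varphi_r$ defined in \eqref{16:57} satisfies $\varphi_r\le \varphi$ on $\Omega$ (because $u\le\varphi$) with equality at $x_0$. Since for fixed value at $x_0$ the map $\psi\mapsto \mathcal{L}_{s}(\Omega(x_0),\psi(x_0))$ is pointwise decreasing in the values $\psi(y)$ at $y\ne x_0$, we get $\mathcal{L}_{s}(\Omega(x_0),\varphi(x_0))\le \mathcal{L}_{s}(\Omega(x_0),\varphi_r(x_0))$. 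Combined with the identity $h(x_0)\varphi(x_0)=h(x_0)\varphi_r(x_0)$ and the assumed \eqref{serve}, the inequality of Definition \ref{def1} for $\varphi$ follows.

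\textbf{The forward direction ($\Rightarrow$).} Here I would approximate $\varphi_r$ from above by a sequence of globally smooth functions that remain admissible test functions in Definition \ref{def1}. Fix $\varphi,x_0,B_r(x_0)$, and pick $\theta_n\in C^\infty_c(B_{r+1/n}(x_0))$ with $\theta_n\equiv 1$ on $\overline{B_r(x_0)}$ and $0\le\theta_n\le 1$. Let $\{u_n\}\subset C^2(\Omega)$ be smooth upper envelopes of $u$ satisfying $u_n\ge u$ and $u_n\to u$ pointwise on $\Omega$; the existence of such a sequence for a bounded USC function is classical and can be arranged, for instance, by sup-convolution followed by a small-scale mollification and a vanishing upward shift. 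Define $\varphi_n := \theta_n\varphi+(1-\theta_n)u_n \in C^2(\Omega)$. Then $\varphi_n\equiv\varphi$ on $B_r(x_0)$, $\varphi_n(x_0)=\varphi(x_0)=u(x_0)$, and $\varphi_n\ge u$ on $\Omega$ as a convex combination of two functions $\ge u$. Applying the subsolution condition of Definition \ref{def1} to each $\varphi_n$ yields $h(x_0)\varphi_n(x_0)+\mathcal{L}_{s}(\Omega(x_0),\varphi_n(x_0))\le f(x_0)$.

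\textbf{Passing to the limit.} I would split $\mathcal{L}_{s}(\Omega(x_0),\varphi_n(x_0))$ into a singular piece on $\Omega(x_0)\cap B_r(x_0)$ and a tail piece on $\Omega(x_0)\setminus B_r(x_0)$. Because $\varphi_n\equiv\varphi$ on $B_r(x_0)$, the singular piece is independent of $n$ and coincides with the corresponding piece of $\mathcal{L}_{s}(\Omega(x_0),\varphi_r(x_0))$. On the tail the kernel is bounded by $r^{-N-2s}$ on the bounded set $\Omega(x_0)\setminus B_r(x_0)$, while $\varphi_n(y)\to u(y)=\varphi_r(y)$ pointwise for $|y-x_0|>r$ (since $\theta_n(y)\to 0$ and $u_n\to u$), with $|\varphi_n|$ uniformly bounded by $\max(\|\varphi\|_\infty,\|u_1\|_\infty)$. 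Dominated convergence then sends the tail piece to the tail part of $\mathcal{L}_{s}(\Omega(x_0),\varphi_r(x_0))$, and \eqref{serve} follows in the limit.

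\textbf{Main obstacle.} The only technical step is constructing the smooth upper envelope $u_n$ so that $\varphi_n$ is globally $C^2$ on $\Omega$ while still enjoying the pointwise convergence needed in the tail; this is a standard viscosity-theoretic regularization, but it must be carried out carefully to preserve $\varphi_n\ge u$ and $\varphi_n(x_0)=u(x_0)$. All remaining steps reduce to dominated convergence and the elementary monotonicity of $\mathcal{L}_{s}$ in its tail argument. The parabolic case goes through verbatim, with $\theta_n$ depending only on the spatial variable so that $\partial_t\varphi_n=\partial_t\varphi$ on the space-time neighborhood of $(t_0,x_0)$ where the test-function condition is invoked.
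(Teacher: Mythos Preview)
Your proposal is correct and follows essentially the same route as the paper: the $(\Leftarrow)$ direction is the identical monotonicity observation, and for $(\Rightarrow)$ both you and the paper approximate $\varphi_r$ from above by a sequence of global $C^2$ test functions, apply Definition~\ref{def1}, and pass to the limit (the paper via Lemma~\ref{continuity}, you via direct dominated convergence on the nonsingular tail). The only difference is the regularization device: the paper first adds a quadratic penalty $\tfrac1n|x-x_0|^2$ to create a uniform gap $u-\varphi_{r,n}\le -r^2/(4n)$ outside $B_{r/2}(x_0)$, then uses a partition of unity and mollifies the tail portion, the gap absorbing the mollification error; you instead build a global smooth upper envelope $u_n\ge u$ of $u$ directly (sup-convolution $+$ mollification $+$ shift) and glue it to $\varphi$ by a cutoff. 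The paper's construction avoids having to manufacture $u_n$ and keeps everything local to $\varphi$, while yours is perhaps more transparent about why the limiting tail is exactly $u$; both yield the same conclusion with comparable effort.
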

\begin{proof}
We prove only the equivalence in the case of  subsolutions,  
the case of supersolution being identical. \EEE

Let us assume at first that $u\in \USC_b(\Omega)$ fulfills the
conditions of Lemma \ref{def2}. We want to check that is a
viscosity subsolution in the sense of Definition \ref{def1}. Let
$\varphi \EEE \in  C^2(\Omega)$ be such that
$u-\varphi $ has a global maximum at $x_0$ and $\varphi(x_0)=u(x_0)$.
It then follows that, for any  $B_{r}(x_0)\subset\Omega$,
\begin{align*}
f(x_0)&\ge  h(x_0)\varphi_r( x_0 )+\mathcal{L}_{s} (\Omega(x_0),\varphi_r( x_0 ))\\[1mm]
 &=h(x_0)u(x_0)+\int_{ \Omega(x)\setminus B_{r}(x_0) }  \frac{u(x)-u(y)}{|x-y|^{N+2s}} dy+\int_{  \Omega(x)\cap B_{r}(x_0) }  \frac{u(x)-\varphi(y)}{|x-y|^{N+2s}} dy\\
&\ge  h(x_0)u(x_0)+\int_{  \Omega(x)}
                                                                                                                                                            \frac{u(x)-\varphi(y)}{|x-y|^{N+2s}} dy\\[1mm]
  &=h(x_0)\varphi ( x_0 )+\mathcal{L}_{s} (\Omega(x_0),\varphi (
    x_0 )) , 
\end{align*}
where the first inequality $\ge$ comes from
\eqref{serve} and the second one follows as $u\le \varphi$ in $\Omega$.

To show the reverse implication, let us assume that
$u\in \USC_b(\Omega)$ \EEE is a viscosity subsolution to \eqref{10:17} according to Definition \ref{def1}. % We want to show that $u$ is also a viscosity subsolution to \eqref{10:17} according to Definition \ref{def2}. 
Let $\varphi\in  C^2({\Omega})$ be such that $u-\varphi$ has a maximum
at $x_0\in\Omega$ and $\varphi(x_0)=u(x_0)$ and, for any
$B_{r}(x_0)\subset\Omega$, let $\varphi_{r}$ be the auxiliary function
defined in \eqref{16:57}.  As a first step, \EEE we modify
$\varphi_{r}$ \EEE  as $\varphi_{r,n}(x)=\varphi_{r}(x)
\EEE+\frac1n|x-x_0|^2$ and notice that, for any $n\in\mathbb{N}$,
the function \EEE $u-\varphi_n$ has a strict local maximum at $x_0$ and 
\[
u-\varphi_{r,n}\le -\frac  {r^2} {4n} \ \ \ \mbox{in} \ \ \ \Omega\setminus B_{\frac{ r}{2}}(x_0).
\]

Let $\psi_1,\psi_2\in C^{\infty}(\Omega)$ be  a partition of unity
\EEE
%Then, let us consider the the  partition of unity $\psi_1,\psi_2\in
%C^{\infty}(\Omega)$
associated to the concentric balls $B_{\frac{ r}{2}}(x_0)$ and
$B_{\frac{3 r}{4}}(x_0)$, namely, $0\le\psi_ , \, \psi_2\le 1$, $\psi_1=1$ in $B_{\frac{ r}{2}}(x_0)$, $\psi_1=0$ in $\Omega\setminus B_{\frac{3 r}{4}}(x_0)$, and $\psi_1+\psi_2=1$. Let us finally set 
\[
\zeta_n=\psi_1\varphi_{r,n}+\rho_{m_n}* (\psi_2\varphi_{r,n})
\]
where $\rho_{m_n}$ is a mollifier and $\{m_n\}_{n\in\mathbb{N}}$ is a
sequence of numbers converging to $0$ to be suitably determined. Notice that $\zeta_n(x)\equiv \varphi_{r}(x)\EEE+\frac1n|x-x_0|^2$ in
$B_{\frac{ r}{2}}(x_0)$ and that $\zeta_n(x_0)=u(x_0)$ for $n$
large.  Moreover, thanks to the properties of mollifiers, for any $n\in \mathbb N$ there exists $m_n\in\mathbb{N}$ such that
\[
u-\zeta_n\le -\frac{ r^2}{8n} \ \ \ \mbox{in} \ \ \ \Omega\setminus B_{\frac{ r}{2}}(x_0).
\] 
Then, $\zeta_n\in C^{2}$ is a good test function for Definition
\ref{def1} and we have
\[
h(x_0)u(x_0)+\mathcal{L}_{s}(\Omega(x_0),\zeta_n(x_0)) \le \ f(x_0). 
\]
Taking the limit as $n\to\infty$ we prove the assertion applying Lemma \ref{continuity}. Note
indeed that $\zeta_n\to \varphi$ in $C_2(B_{\frac r2}(x_0))$ and $\zeta_n\to u$ pointwise in $\Omega\setminus B_{\frac r2}(x_0)$.
\end{proof}
In proving the stability of  families of viscosity solutions, a
suitable notion of limit for sequences of upper semicontinuous
functions has to be considered, see for instance \cite{caff}. We
introduce the following.
% To this
% aim 
% Now we recall the notion of $\Gamma-$convergence for uppersemicontinuous functions, see for instance \cite{caff}. It represents the minimal requirement to prove stability for families of viscosity solutions.
%\begin{defin}[$\Gamma$-convergence]
%A sequence of upper-semicontinuous functions $v_n$ is said to
%\emph{$\Gamma$-converge to $v$ at $\bar z\in D\subset
%\mathbb{R}^M$}  if
%\begin{align}
%  &\label{gamma1}
%\forall   z_n\to \bar z: \quad 
%    \limsup_{n\to\infty}v_n(z_n)\le v(\bar z)\\
%  &\label{gamma2}
% \exists   z_n\to \bar z \ \ : \ \
%    \lim_{n\to\infty}v_n(z_n)=v(\bar z).
%    \end{align}
%\end{defin} 
\begin{defin}[$\Gamma$-convergence]
A sequence of upper-semicontinuous functions $v_n$ is said to
\emph{$\Gamma$-converge to $v$ in $ D\subset
\mathbb{R}^M$}  if
\begin{align}
  &\label{gamma1}
\mbox{for all converging sequences }   z_n\to \bar z \mbox{ in } D \ : \quad 
    \limsup_{n\to\infty}v_n(z_n)\le v(\bar z)\\
  &\label{gamma2}
 \mbox{for all $\bar z\in D$ there exists a sequence }   z_n\to \bar z \ \ : \ \
    \lim_{n\to\infty}v_n(z_n)=v(\bar z).
    \end{align}
\end{defin} 
  This concept corresponds (up to a sign change) to a localized version of the
  classical $\Gamma$-convergence notion, see  \cite{DalMaso93}, hence
  the same name.
  
Clearly, uniformly converging sequences in $\Omega$ are also
$\Gamma$-converging.
Moreover,
$\Gamma$-convergence readily ensues in connection with the
upper-semicontinuous envelope  of a family of functions. Both
examples  will play a role in the sequel.
%This type of convergence is deeply linked with the notion of
%viscosity solution, see for instance Proposition 4.3 in \cite{CIL}
%and Lemma 4.5 in \cite{caff}. The notion of $\Gamma-$convergence
%represents the minimum requirement to have stability  The two
%ingredients above represent the minimum requirements to show that a
%certain family of subsol have some stability properties for some
%family of viscosity (sub)solutions.

The following stability result is an adaptation of the classical one provided in Proposition 4.3 of \cite{CIL} (see also Lemma 4.5 in \cite{caff}).
\begin{lemma}[Stability]\label{stability}
Let us consider $v\in \USC_b((0,T)\times\Omega)$ and $f\in
C((0,T)\times\Omega)$. Assume moreover that
\begin{align*}
i)&\quad \{v_n\}\subset \USC_b((0,T)\times\Omega)  \ \ \text{$\Gamma-$converges to $v$  in $(0,T)\times\Omega$,}\\
 ii)&\quad   f_n \to f, h_n \to h \ \ \text{locally uniformly}, \mbox{ and } |\Omega_n\triangle\Omega|\to \mbox{ as } n\to\infty, %\ \chi_{\Omega_n}\to \chi_{\Omega} \ \ \text{almost everywhere in} \ \Omega,
\\
iii)&\quad  
\partial_tv_n(t,x) +h_n( t, x)v_n(t,x) +\mathcal{L}_{s} (\Omega_n( t, x),v_n( t,
      x)  )\le  f_n(
      t, x) \ \ \ \mbox{in} \ \ \ (0,T)\times\Omega \ \ \text{ 
  in the viscosity sense.}
\end{align*}
  Then, if $\Omega( t, x)$ satisfies \eqref{contpar},\eqref{omegax}, it follows that
\[
\partial_tv( t, x)+h( t, x)v( t, x)+\mathcal{L}_{s} (\Omega( t, x),v( t, x))\le  f( t, x)
\]
in the viscosity sense.
\end{lemma}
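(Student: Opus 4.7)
The plan is to adapt the classical stability argument for viscosity solutions to the present nonlocal parabolic setting, with the main novelty being a careful handling of the integral operator via Lemma \ref{continuity} and the localised characterisation of Lemma \ref{def2}. I fix a test function $\varphi\in C^2((0,T)\times\Omega)$ and a point $(\bar t,\bar x)\in(0,T)\times\Omega$ at which $v-\varphi$ attains a local maximum with $v(\bar t,\bar x)=\varphi(\bar t,\bar x)$. After perturbing $\varphi$ by $|t-\bar t|^2+|x-\bar x|^4$ I may assume the maximum is strict on a small closed neighbourhood $K\subset(0,T)\times\Omega$ of $(\bar t,\bar x)$. Since each $v_n-\varphi$ is upper semicontinuous, it attains its maximum on $K$ at some $(t_n,x_n)$; combining the two halves of $\Gamma$-convergence in the standard way forces $(t_n,x_n)\to(\bar t,\bar x)$ and $v_n(t_n,x_n)\to v(\bar t,\bar x)=\varphi(\bar t,\bar x)$, so that the shifts $c_n:=v_n(t_n,x_n)-\varphi(t_n,x_n)$ vanish.

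For any $r>0$ with $\overline{B_r(x_n)}\subset\Omega$ I then apply the parabolic analogue of Lemma \ref{def2} using the patched test function
\[
\psi_{r,n}(y)=\begin{cases}\varphi(t_n,y)+c_n & y\in B_r(x_n),\\ v_n(t_n,y) & y\notin B_r(x_n),\end{cases}
\]
which yields the pointwise inequality
\[
\partial_t\varphi(t_n,x_n)+h_n(t_n,x_n)v_n(t_n,x_n)+\mathcal{L}_s\bigl(\Omega_n(t_n,x_n),\psi_{r,n}(x_n)\bigr)\le f_n(t_n,x_n).
\]
I then pass to the limit $n\to\infty$: by continuity of $\varphi$, locally uniform convergence of $h_n$ and $f_n$, and the convergence $v_n(t_n,x_n)\to v(\bar t,\bar x)$ already established, the first two terms on the left and the right-hand side converge to $\partial_t\varphi(\bar t,\bar x)$, $h(\bar t,\bar x)v(\bar t,\bar x)$ and $f(\bar t,\bar x)$ respectively.

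The main effort concerns the nonlocal term, which I split along $B_r(x_n)$ and its complement. On $B_r(x_n)$, for $r\le\zeta d(\bar x)/2$, assumption \eqref{omegax} ensures $\widetilde\Omega_n(t_n,x_n)\cap B_r(0)=\Sigma\cap B_r(0)$ independently of $n$, and the integrand satisfies the quadratic bound \eqref{c11} by the $C^2$-regularity of $\varphi$; Lemma \ref{continuity} then handles this piece directly, its limit being the corresponding integral for $\varphi(\bar t,\cdot)$ on $\Sigma\cap B_r(0)$. On the exterior $\mathbb{R}^N\setminus B_r(x_n)$ the kernel is non-singular and bounded, each $v_n$ is uniformly bounded on $\Omega$, and hypothesis (ii) combined with the set-valued continuity \eqref{contpar} yields $\chi_{\Omega_n(t_n,x_n)}\to\chi_{\Omega(\bar t,\bar x)}$ in $L^1$; the $\Gamma$-convergence bound $\limsup_n v_n(t_n,y)\le v(\bar t,y)$ combined with reverse Fatou (legitimate by uniform boundedness of $v_n$) then provides the desired one-sided estimate for this piece. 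Summing the inside and outside contributions and letting $r\to 0$ after $n\to\infty$ yields the localised subsolution inequality for $v$ at $(\bar t,\bar x)$, which by the equivalence in Lemma \ref{def2} means $v$ is a viscosity subsolution of the limit equation.

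The hard part is the passage to the limit of the exterior piece of the nonlocal term, where $\Gamma$-convergence supplies only the one-sided bound $\limsup_n v_n(t_n,y)\le v(\bar t,y)$. The patching in Lemma \ref{def2} is what makes this feasible: it replaces the unknown $v_n$ by the smooth test function close to the singularity — removing any need for a pointwise limit there — and leaves only a non-singular, sign-coherent exterior integral for which the one-sided $\limsup$ inequality of $\Gamma$-convergence is compatible with the direction of the subsolution inequality. A secondary technicality is extracting the set convergence $\chi_{\Omega_n(t_n,x_n)}\to\chi_{\Omega(\bar t,\bar x)}$ from the combination of hypothesis (ii) with the continuity assumption \eqref{contpar} on the limit map $\Omega(\cdot,\cdot)$.
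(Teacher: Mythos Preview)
Your argument is essentially correct but takes a genuinely different route from the paper. The paper never uses the patched test function of Lemma~\ref{def2}; instead it stays with the global smooth test function of Definition~\ref{def1}. After the standard penalisation $\varphi_\theta=\varphi+\theta(|t-\bar t|^2+|x-\bar x|^2)$ and shifting by the constants $\epsilon_n=\sup_{(0,T)\times\Omega}(v_n-\varphi_\theta)$, the maxima of $v_n-(\varphi_\theta+\epsilon_n)$ are global, so the subsolution inequality for $v_n$ features $\mathcal{L}_s(\Omega_n(t_n,x_n),\varphi_\theta(t_n,\cdot))$ evaluated on the \emph{fixed smooth} function $\varphi_\theta$. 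Passing $n\to\infty$ is then a direct application of Lemma~\ref{continuity}, followed by $\theta\to0$ (again via Lemma~\ref{continuity}); no Fatou-type step is needed at all. Your route, by contrast, keeps $v_n$ itself in the exterior integral and therefore genuinely needs the one-sided $\Gamma$-convergence bound $\limsup_n v_n(t_n,y)\le v(\bar t,y)$ together with Fatou's lemma---note this is standard Fatou with an integrable lower bound, not ``reverse Fatou'' as you wrote, since the inequality you want is $E\le\liminf_n E_n$. This works (after passing to a subsequence along which the indicator functions converge a.e.), and it nicely illustrates why the localised definition is robust, but the paper's version is shorter precisely because it sidesteps that sign-tracking. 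One further minor point: your final step $r\to0$ is superfluous, since Lemma~\ref{def2} only requires the inequality for some (equivalently, every) fixed $r>0$.
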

\begin{remark}\label{ellipticparabolic}\rm
An elliptic version of the Lemma holds true by assuming \eqref{continuita}-\eqref{Sigmadef}. Notice that the stationary case  can be straightforwardly obtained from the evolutionary one upon interpreting  %The proof of Corollary
                                %\ref{ellipticdif} follows easily from
                                %the following facts: any
  $u:\Omega \to \mathbb{R}$  as a trivial time-dependent function
  $\tilde u(t,x)= u(x)$  on   $(0,T)\times\Omega$. In
  fact, if  such a  function is touched from above or from below
  by a smooth function $\phi\in C^2((0,T)\times\Omega)$ at some
  $(t,x)\in(0,T)\times\Omega$,  we have that   $\partial_t
  \varphi(t,x)=0$. We hence conclude that  such   time-dependent
  representation $\tilde u(t,x) $  of a subsolution (or supersolution) $u(x)$ of the
  elliptic problem is subsolution (supersolution, respectively) of
  its  parabolic counterpart. 
\end{remark}
\begin{proof}[ Proof of Lemma \ref{stability}]
Let us assume that $v-\varphi$ has a strict global maximum, equal to
$0$, at $(\bar t,\bar x)\in (0,T)\times \Omega$. Taking $\varphi_{\theta}=\varphi+\theta (|t-\bar t|^2+|x-\bar x|^2)$, we have that also the $\sup (v-\phi_{\theta})$ is reached only at $(\bar t,\bar x)$.
Owing to the assumption $i)$,  we know that there exists a sequence of points $\{( \tau_{n}, y_{n})\}\subset (0,T)\times\Omega$ such that
\be\label{6.10}
( \tau_{n}, y_{n}, v_n(\tau_{n}, y_{n}))\to (\bar t,\bar x, v(\bar t,\bar x)).
\ee
%Let us take $\delta$ small enough to have $B_{\delta}(\bar t,\bar
%x):=\{(t,x)\ : \   (|t-\bar t|^2+ |x-\bar
%x|^2)^{\frac12}< \delta\} \subset  (0,T)\times\Omega$ and $n$ large enough so that $( \tau_{n}, y_{n})\in B_{\delta}(\bar t,\bar x)$.
Thanks to the penalization in the definition of $\varphi_{\theta}$ and assumption $i)$, for $n$ large enough we have that 
\[
v_n(\tau_{n}, y_{n})-\varphi_{\theta}(\tau_{n}, y_{n})\le \sup_{(0,T)\times \Omega} (v_n-\varphi_{\theta})=v_n(t_{n}, x_{n})-\varphi_{\theta}(t_{n}, x_{n}) =\epsilon_n,
\]
for some  $\{(t_{n}, x_{n})\}\in (0,T)$ such that, up to a not relabeled subsequence, $(t_n,x_n)\to(\tilde t, \tilde x)\in (0,T)\times\Omega$. Using again the $\Gamma-$convergence of $v_n$, we find 
\[
(v-\varphi_{\theta})(\tilde t, \tilde x)\ge \limsup_{n\to\infty }(v_n-\varphi_{\theta})(t_{n}, x_{n}) \ge\lim_{n\to\infty}(v_n-\varphi_{\theta})(\tau_{n}, y_{n})=(v-\varphi_{\theta})(\bar t,\bar x)=0.
\]
Since the supremum of $v-\varphi$ is strict, this implies that $(\tilde t, \tilde x)=(\bar t, \bar x)$ and that $\epsilon_n\to 0$.
Moreover, setting $\varphi_n=\varphi_{\theta}+\epsilon_n$, it follows that $\sup_{(0,T)\times \Omega}(v_n-\varphi_{n})=(v_n-\varphi_{\theta})(t_{n}, x_{n})=0$.

In conclusion,  we have proved that $v_n-\varphi_n$ has a global maximum at $(t_n,x_n)$ (for $n$ large enough) and $v_n(t_n,x_n)=\varphi_n(t_n,x_n)$, that $\epsilon_n\to 0$, and that $(t_n,x_n)\to(\bar t, \bar x)$. 
Since each $v_n$ is a subsolution at $(\bar t, \bar x)$  we get
\[
\partial_t\varphi_n( t_n,x_n)+h_n( t_n,x_n)v_{n}( t_n,x_n)+\mathcal{L}_{s} (\Omega_n(t_n,x_n),\varphi_n( t_n,x_n))\le f_n( t_n,x_n).
\]
The first two terms in the left-hand side and the one in the right-hand
side  easily pass to the limit  for $n \to \infty$, 
thanks to the continuity of $h$ and  $f$ and to  the
definition of $\varphi_n$. In order  to deal with the
integral operator notice that
\[
\mathcal{L}_{s} (\Omega_n(t_n,x_n),\varphi_n( t_n,x_n))=\mathcal{L}_{s} (\Omega_n(t_n,x_n),\varphi( t_n,x_n)).
\]
Since $\varphi$ is smooth, we can use Lemma \ref{continuity}, with
$\Theta(x_n)=\Omega_n(t_n,x_n)$ and $\phi_n(\cdot)=\varphi(
t_n,\cdot)$,  and  pass to the limit with respect to $n$.
Eventually,  we get
\[
\partial_t\varphi_{\theta}( \bar t,\bar x)+h( \bar t,\bar x)v ( \bar t,\bar x)+\mathcal{L}_{s} (\Omega( \bar t,\bar x),\varphi_{\theta}( \bar t,\bar x))\le f( \bar t,\bar x),
\]
for any $\theta>0$. Taking the limit (using Lemma \ref{continuity} again) as $\theta\to 0$, we obtain the desired result.\EEE
\end{proof}

The previous stability result highlights the robustness of 
% how much robust
the notion of viscosity solution in relation to limit 
procedures. Notice that, given any uniformly bounded sequence of
viscosity (sub/super) solutions of a certain family of equations, 
one can always find a $\Gamma-$limit  and this is  the candidate (sub/super) solution for the limiting equation. Such candidate is given by the lower/upper half relaxed limit 
\[
\overline{v}(x)=\sup\{\limsup_{n\to\infty} v_n(x_n) \ : \ x_n\to x\}, \ \ \ \ \underline{v}(x)=\inf\{\liminf_{n\to\infty} v_n(x_n) \ : \ x_n\to x\}.
\]
It is easy to check that $v_n$ $\Gamma-$converges to $\overline{v}$
and $-v_n$ $\Gamma-$converges to $-\underline{v}$. The key point here
is that no compactness on the sequence $v_n$ is required for the
existence of $\overline{v}$ and  $\underline{v}$, as  boundedness
suffices.  As we shall see in Section \ref{seceigenvalue}, this is a particularly powerful tool when dealing with equations that satisfy a comparison principle.
\EEE

\subsection{Sup- and infconvolution}
In the sequel we often need to determine the equation (or inequality)
solved by the difference of sub- or supersolutions. Note that,
since such functions are not smooth, the  property of being a
sub- or supersolution may be not preserved by taking
differences.  To deal with this difficulty, we need to use
suitable regularization of the involved functions.  Let us start
recalling the definition of {\it supconvolution} of $u\in
\USC_b((0,T)\times\Omega)$, namely, 
\be\label{supcon}
u^{\epsilon}(t,x)=\sup_{(\tau, y) \in (0,T)\times\Omega}\left\{u(\tau,y)-\frac1{\epsilon}(|x-y|^2+|t-\tau|^2)\right\}.
\ee
Notice that, since $u$ is upper semicontinuous and bounded, for
$\epsilon$ small enough the supremum above is reached inside
$(0,T)\times\Omega$. To be more precise, let us adopt the following
notation: for any $(t,x)\in(0,T)\times\Omega$,  let 
$(t^{\epsilon},x^{\epsilon})$  be  a point with the following property
\be\label{01-06bis}
u^{\epsilon}(t,x)=u(t^{\epsilon},x^{\epsilon})-\frac1{\epsilon}(|x-x^{\epsilon}|^2+|t-t^{\epsilon}|^2).
\ee
Then,
\be\label{control}
(|x-x^{\epsilon}|^2+|t-t^{\epsilon}|^2)\le 2\epsilon \|u\|_{L^{\infty}((0,T)\times\Omega)}.
\ee
Moreover, by construction, it results that the parabola
\[
P(t,x)=u(\bar{t}^{\epsilon},\bar{x}^{\epsilon})-\frac1{\epsilon}(|t-\bar{t}^{\epsilon}|^2+|x-\bar{x}^{\epsilon}|^2)
\]
touches $u^{\epsilon}$ from below at $(\bar t, \bar x)$. This
shows that the supconvolution is  semiconvex in
$(0,T)\times\Omega$. Such a property is particularity useful in order
to pointwise evaluate some viscosity inequality related to
subsolutions. Indeed, let us assume that $u^{\epsilon}$ is touched
from above by a smooth function at $(\bar t, \bar x)$. Then, thanks to
its semiconvexity property, we deduce that $u^{\epsilon}\in
C^{1,1}(\bar t, \bar x)$, namely there exist $q  \in\mathbb{R}^{N+1}$ and $C>0$ such that, in a neighborhood of $(\bar t, \bar x)$,
\be\label{c11}
\left|u^{\epsilon}(t,x)-u^{\epsilon}(\bar t, \bar x)-q \cdot\binom{t-\bar t}{x-\bar x}\right|\le C(|t-\bar{t}|^2+|x-\bar{x}|^2).
\ee
This means that the time derivative and the fractional operator can be
evaluated pointwise at $(\bar t, \bar x)$ (see Lemma \ref{pointsub}
below).

Similarly, the {\it infconvolution} of a function $v\in \LSC((0,T)\times\Omega)\cap L^{\infty}((0,T)\times\Omega)$ is defined as
\be\label{infconv}
u_{\epsilon}(t,x)=\inf_{(\tau, y) \in (0,T)\times\Omega}\left\{u(\tau,y)+\frac1{\epsilon}(|x-y|^2+|t-\tau|^2)\right\},
\ee
and we let $(t_{\epsilon},x_{\epsilon})$ be the point
where 
\be\label{01-06tris}
u_{\epsilon}(t,x)=u(t_{\epsilon},x_{\epsilon})-\frac1{\epsilon}(|x-x_{\epsilon}|^2+|t-t_{\epsilon}|^2).
\ee
The property of the infconvolution correspond  to those of
the supconvolution up to the trivial transformation
$v_{\epsilon}=-(-v)^{\epsilon}$. Omitting further details for the
sake of brevity, we limit ourselves in proving the following inequality
on supconvolutions.  

\begin{lemma}\label{pointsub}
Let us assume \eqref{Sigmadef}, \eqref{contpar}, and \eqref{omegax} and that $u(t,x)$ is a viscosity subsolution to
\eqref{parabolic} and let $u^{\epsilon}(t,x)$ be its
supconvolution. If  $u^{\epsilon}$ is touched from above by some smooth function at $(\bar t, \bar x)$, the following inequality holds in a classical sense
\be\label{13:34uno}
\partial_tu^{\epsilon}(\bar t, \bar x)+h(\bar{t}^{\epsilon},\bar{x}^{\epsilon})u^{\epsilon}(\bar t, \bar x)+\mathcal{L}_{s} (\Omega(\bar{t}^{\epsilon},\bar{x}^{\epsilon}),u^{\epsilon}(\bar t, \bar x))\le f(\bar{t}^{\epsilon},\bar{x}^{\epsilon}),
\ee
where $(\bar{t}^{\epsilon},\bar{x}^{\epsilon})$ satisfies \eqref{01-06bis}.
\end{lemma}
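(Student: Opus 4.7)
The plan is the classical shift argument for supconvolutions, combined with the equivalent subsolution formulation of Lemma~\ref{def2} and a careful $r \to 0$ limit. The touching of $u^\epsilon$ at $(\bar t,\bar x)$ by the smooth $\varphi$ is lifted to a touching of $u$ at $(\bar t^\epsilon,\bar x^\epsilon)$ by a translated test function; the subsolution inequality for $u$ is applied there; and the result is pulled back to $(\bar t,\bar x)$ via the supconvolution identities.

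First, I would verify that every term in \eqref{13:34uno} is classically meaningful. The parabola \eqref{01-06bis} touches $u^\epsilon$ from below at $(\bar t,\bar x)$ while $\varphi$ touches from above, giving the two-sided second-order expansion \eqref{c11}. Together with the symmetry of $\Sigma$ in \eqref{Sigmadef} (which cancels the odd linear part on $\Sigma\cap B_r(0)$) and the matching \eqref{omegax}, this ensures that $\partial_t u^\epsilon(\bar t,\bar x)$ exists in the classical sense and that $\mathcal{L}_s(\Omega(\bar t^\epsilon,\bar x^\epsilon),u^\epsilon(\bar t,\bar x))$ is a convergent principal value with inner contribution of order $O(r^{2-2s})$, exactly as in Lemma~\ref{continuity}.

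Next, setting $\Delta t:=\bar t^\epsilon-\bar t$, $\Delta x:=\bar x^\epsilon-\bar x$ and $C:=\tfrac{1}{\epsilon}(|\Delta t|^2+|\Delta x|^2)$, the defining supconvolution inequality $u^\epsilon(\tau,y)\ge u(\tau+\Delta t,y+\Delta x)-C$ (with equality at $(\bar t,\bar x)$) shows that
\[
\tilde\varphi(\tau',y'):=\varphi(\tau'-\Delta t,y'-\Delta x)+C
\]
touches $u$ from above at $(\bar t^\epsilon,\bar x^\epsilon)$. Invoking Lemma~\ref{def2} with the auxiliary test $\tilde\varphi_r$ (equal to $\tilde\varphi$ on $B_r(\bar x^\epsilon)$ and to $u$ outside) yields, for every small $r>0$,
\[
\partial_t\tilde\varphi(\bar t^\epsilon,\bar x^\epsilon)+h(\bar t^\epsilon,\bar x^\epsilon)\,u(\bar t^\epsilon,\bar x^\epsilon)+\mathcal{L}_s\bigl(\Omega(\bar t^\epsilon,\bar x^\epsilon),\tilde\varphi_r(\bar t^\epsilon,\bar x^\epsilon)\bigr)\le f(\bar t^\epsilon,\bar x^\epsilon).
\]
After the change of variable $z=\bar x^\epsilon-y$ the additive constant $C$ cancels from the integrand. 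On the outer piece the supconvolution identity $u(\bar t^\epsilon,\bar x^\epsilon-z)\le u^\epsilon(\bar t,\bar x-z)+C$ together with $u(\bar t^\epsilon,\bar x^\epsilon)=u^\epsilon(\bar t,\bar x)+C$ gives $u(\bar t^\epsilon,\bar x^\epsilon)-u(\bar t^\epsilon,\bar x^\epsilon-z)\ge u^\epsilon(\bar t,\bar x)-u^\epsilon(\bar t,\bar x-z)$; on the inner ball the domination $\varphi\ge u^\epsilon$ with equality at $(\bar t,\bar x)$ forces the linear parts of $\varphi$ and of the expansion \eqref{c11} of $u^\epsilon$ to agree, so $(\varphi-u^\epsilon)(\bar t,\bar x-z)=O(|z|^2)$ and the $\tilde\varphi_r/u^\epsilon$ integrand discrepancy on $B_r(0)$ contributes at most $O(r^{2-2s})$. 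Combining,
\[
\mathcal{L}_s\bigl(\Omega(\bar t^\epsilon,\bar x^\epsilon),\tilde\varphi_r(\bar t^\epsilon,\bar x^\epsilon)\bigr)\ge \mathcal{L}_s\bigl(\Omega(\bar t^\epsilon,\bar x^\epsilon),u^\epsilon(\bar t,\bar x)\bigr)-O(r^{2-2s}),
\]
and since $\partial_t\tilde\varphi(\bar t^\epsilon,\bar x^\epsilon)=\partial_t u^\epsilon(\bar t,\bar x)$ and $h\ge 0$ with $u(\bar t^\epsilon,\bar x^\epsilon)\ge u^\epsilon(\bar t,\bar x)$, letting $r\to 0$ delivers \eqref{13:34uno}.

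The main obstacle is the $x$-dependence of $\Omega(\cdot)$, which breaks translation invariance of $\mathcal{L}_s$: the natural integration domain after the shift remains $\tilde\Omega(\bar t^\epsilon,\bar x^\epsilon)$ rather than $\tilde\Omega(\bar t,\bar x)$, and this is precisely what forces the mixed appearance of $(\bar t^\epsilon,\bar x^\epsilon)$ and $(\bar t,\bar x)$ in \eqref{13:34uno}. The delicate point is arranging the inside/outside sign comparisons so that the $O(r^{2-2s})$ inner error vanishes in the limit while the outer supconvolution identity keeps the subsolution inequality intact; the $C^{1,1}$ expansion \eqref{c11} (and the matching expansion of $\varphi$) is essential to control the inner contribution.
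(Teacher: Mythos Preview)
Your proposal is correct and follows essentially the same shift argument as the paper: translate the test function so that $\tilde\varphi$ touches $u$ at $(\bar t^\epsilon,\bar x^\epsilon)$, apply the localized subsolution inequality of Lemma~\ref{def2}, pull the identities back to $(\bar t,\bar x)$, and pass to the limit $r\to 0$ using the $C^{1,1}$ expansion \eqref{c11}. Your write-up is in fact slightly more careful on two points the paper glosses over: you correctly record the outer comparison $u(\bar t^\epsilon,\bar x^\epsilon)-u(\bar t^\epsilon,\bar x^\epsilon-z)\ge u^\epsilon(\bar t,\bar x)-u^\epsilon(\bar t,\bar x-z)$ as an \emph{inequality} (the paper presents it as an equality), and you make explicit that dropping the constant $C$ in the zero-order term requires $h\ge 0$, which holds by \eqref{alphap}.
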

\begin{proof} Let us assume that $u^{\epsilon}$ is touched from above by a smooth $\varphi$ at $(\bar t,\bar x)$. Let us recall that there exists $(\bar{t}^{\epsilon},\bar{x}^{\epsilon})$ such that
\[
u^{\epsilon}(\bar t,\bar x)=u^{\epsilon}(\bar{t}^{\epsilon},\bar{x}^{\epsilon})-\frac1{\epsilon}(|\bar{t}-\bar{t}^{\epsilon}|^2+|\bar{x}-\bar{x}^{\epsilon}|^2) \ \ \ \mbox{and} \ \ \ (\bar{t}^{\epsilon},\bar{x}^{\epsilon})\to(\bar t,\bar x) \ \ \ \mbox{as} \ \ \ \epsilon\to0.
\]
By definition of supconvolution we have that
\[
u^{\epsilon}(t+\bar t-\bar{t}^{\epsilon},x+\bar x-\bar{x}^{\epsilon})\ge u(\tau,y)-\frac1{\epsilon}(|t+\bar t-\bar{t}^{\epsilon}-\tau|^2+|x+\bar x-\bar{x}^{\epsilon}-y|^2).
\]
Choosing $(\tau,y)=(t,x)$ it follows
\[
u^{\epsilon}(t+\bar t-\bar{t}^{\epsilon},x+\bar x-\bar{x}^{\epsilon})\ge u(t,x)-\frac1{\epsilon}(|\bar t-\bar{t}^{\epsilon}|^2+|\bar x-\bar{x}^{\epsilon}|^2).
\]
Then, by  defining 
\[
\bar{\varphi}(t,x)=\varphi(t+\bar t-\bar{t}^{\epsilon},x+\bar x-\bar{x}^{\epsilon})+\frac1{\epsilon}(|\bar t-\bar{t}^{\epsilon}|^2+|\bar x-\bar{x}^{\epsilon}|^2),
\]
we infer that $\bar{\varphi}$ touches from above $u$ at $(\bar{t}^{\epsilon},\bar{x}^{\epsilon})$. Since $u$ is a viscosity subsolution to \eqref{parabolic}, it follows that
\be\label{01-06}
\partial_t\bar{\varphi}_{r}(\bar{t}^{\epsilon},\bar{x}^{\epsilon})+h(\bar{t}^{\epsilon},\bar{x}^{\epsilon})\bar{\varphi}_{r}(\bar{t}^{\epsilon},\bar{x}^{\epsilon})+\mathcal{L}_{s} (\Omega(\bar{t}^{\epsilon},\bar{x}^{\epsilon}),\bar{\varphi}_{r}(\bar{t}^{\epsilon},\bar{x}^{\epsilon}))\le f(t,x).
\ee
Now notice that by construction of $\bar{\varphi}$ it results $\partial_t\bar{\varphi}_{r}(\bar{t}^{\epsilon},\bar{x}^{\epsilon})=\partial_t{\varphi}_{r}(\bar{t},\bar{x})$ and
\[
 \bar{\varphi}_{r}(\bar{t}^{\epsilon},\bar{x}^{\epsilon})-\bar{\varphi}_{r}(\bar{t}^{\epsilon},\bar{x}^{\epsilon}+z)= \varphi_{r}(\bar t,\bar x)-\varphi_{r}(\bar t,\bar x+z),
\]
that implies
\[
\mathcal{L}_{s} (\Omega(\bar{t}^{\epsilon},\bar{x}^{\epsilon}),\bar{\varphi}_{r}(\bar{t}^{\epsilon},\bar{x}^{\epsilon}))=\int_{\Omegat(\bar{t}^{\epsilon},\bar{x}^{\epsilon})}\frac{\bar{\varphi}_{r}(\bar{t}^{\epsilon},\bar{x}^{\epsilon})-\bar{\varphi}_{r}(\bar{t}^{\epsilon},\bar{x}^{\epsilon}+z)}{|z|^{N+2s}}dz
\]
\[
=\int_{\Omegat(\bar{t}^{\epsilon},\bar{x}^{\epsilon})}\frac{\varphi_{r}(\bar t,\bar x)-\varphi_{r}(\bar t,\bar x+z)}{|z|^{N+2s}}dz=\mathcal{L}_{s} (\Omega(\bar{t}^{\epsilon},\bar{x}^{\epsilon}),{\varphi}_{r}(\bar{t},\bar{x})).
\]
Then, \eqref{01-06} becomes
\[
\partial_t{\varphi}_{r}(\bar{t},\bar{x})+h(\bar{t}^{\epsilon},\bar{x}^{\epsilon})u^{\epsilon}(\bar{t},\bar{x})+\mathcal{L}_{s} (\Omega(\bar{t}^{\epsilon},\bar{x}^{\epsilon}),{\varphi}_{r}(\bar{t},\bar{x}))\le f(\bar{t}^{\epsilon},\bar{x}^{\epsilon}).
\]
Since $u^{\epsilon}$ is touched from above by a smooth
function at $(\bar t,\bar x)$, we know that $u^{\epsilon}\in
C^{1,1}(\bar t,\bar x)$ (see \eqref{c11}) and then
$\partial_t{\varphi}_{r}(\bar{t},\bar{x})=\partial_tu^{\epsilon}(\bar{t},\bar{x})$. Recalling assumption \eqref{omegax} and since $(\bar{t}^{\epsilon},\bar{x}^{\epsilon})\to (\bar t,\bar x)\in (0,T)\times \Omega$, we deduce that there exists $\delta>0$ such that, for any $r<\delta$, we can decompose the nonlocal operator as follows
\begin{align*}
\mathcal{L}_{s}
  (\Omega(\bar{t}^{\epsilon},\bar{x}^{\epsilon}),{\varphi}_{r}(\bar{t},\bar{x}))
  &=\int_{\Sigma \cap B_r(0)}\frac{\varphi(\bar t,\bar x)-\varphi(\bar
    t,\bar x+z)}{|z|^{N+2s}}dz\\
  &\quad +\int_{\Omegat(\bar{t}^{\epsilon},\bar{x}^{\epsilon})\setminus B_r(0)}\frac{u^{\epsilon}(\bar t,\bar x)-u^{\epsilon}(\bar t,\bar x+z)}{|z|^{N+2s}}dz.
\end{align*}
The integral on $\Sigma\cap B_r(0))$ is well defined converges to zero as $r\to0$, due to the smoothness of $\varphi$ and the symmetry of $\Sigma$. To deal with the second integral we apply Lemma \ref{continuity} with $x_n=\bar x^{\epsilon}$,  $\Theta(x_n)=\Omega(\bar{t}^{\epsilon},x_n)\setminus B_{\frac1n}(x_n))$ and $\phi_{n}(\cdot)=\phi(\cdot)=u^{\epsilon}(\bar t,\cdot)$. We deduce that $\mathcal{L}_{s}(\Omega(\bar{t}^{\epsilon},\bar{x}^{\epsilon}),{\varphi}_{r}(\bar{t},\bar{x}))\to
\mathcal{L}_{s}
(\Omega(\bar{t}^{\epsilon},\bar{x}^{\epsilon}),u^{\epsilon}(\bar{t},\bar{x}))$
as $r\to0$. This completes the proof of the Lemma.
\end{proof}
A similar inequality holds for infconvolution. For the sake of
later reference, we state it here below without proof. This  can be
obtained by straightforwardly adapting the argument of Lemma
\ref{pointsub}. 
\begin{lemma}\label{pointsup}
Let us assume that $v(t,x)$ is a viscosity supersolution to
\eqref{parabolic} and let $v_{\epsilon}(t,x)$ be its
infconvolution. If $v_{\epsilon}$ is touched from below by some smooth function at $(\bar t, \bar x)$, the following inequality holds in a classical sense
\be\label{13:34due}
\partial_tv_{\epsilon}(\bar t, \bar x)+h(\bar{t}_{\epsilon},\bar{x}_{\epsilon})v_{\epsilon}(\bar t, \bar x)+\mathcal{L}_{s} (\Omega(\bar{t}_{\epsilon},\bar{x}_{\epsilon}),v_{\epsilon}(\bar t, \bar x))\le f(\bar{t}_{\epsilon},\bar{x}_{\epsilon}),
\ee
where $(t_{\epsilon},x_{\epsilon})$ satisfies \eqref{01-06tris}.
\end{lemma}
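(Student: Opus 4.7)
The plan is to adapt the argument of Lemma \ref{pointsub} by exploiting the structural identity $v_\epsilon=-(-v)^\epsilon$, but carrying out the computation directly so as to keep track of the signs and of the supersolution inequality. Fix a smooth $\varphi$ touching $v_\epsilon$ from below at $(\bar t,\bar x)$. Since $v\in\LSC_b$ and the penalization in \eqref{infconv} is coercive, the infimum is attained at some $(\bar t_\epsilon,\bar x_\epsilon)\in(0,T)\times\Omega$ for $\epsilon$ small, with the analogue of \eqref{control} guaranteeing that $(\bar t_\epsilon,\bar x_\epsilon)\to(\bar t,\bar x)$ as $\epsilon\to 0$. Moreover $v_\epsilon$ is semiconcave: the parabola $Q(t,x)=v(\bar t_\epsilon,\bar x_\epsilon)+\tfrac{1}{\epsilon}(|t-\bar t_\epsilon|^2+|x-\bar x_\epsilon|^2)$ touches $v_\epsilon$ from above at $(\bar t,\bar x)$, so that the analogue of \eqref{c11} holds and the pointwise quantities $\partial_t v_\epsilon(\bar t,\bar x)$ and $\mathcal{L}_s(\Omega(\bar t_\epsilon,\bar x_\epsilon),v_\epsilon(\bar t,\bar x))$ are classically defined.

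Next I construct a test function for $v$ by setting
\[
\bar\varphi(t,x)=\varphi(t+\bar t-\bar t_\epsilon,\,x+\bar x-\bar x_\epsilon)-\frac{1}{\epsilon}\bigl(|\bar t-\bar t_\epsilon|^2+|\bar x-\bar x_\epsilon|^2\bigr).
\]
Choosing $(\tau,y)=(t,x)$ in \eqref{infconv} evaluated at the shifted point yields $v_\epsilon(t+\bar t-\bar t_\epsilon,x+\bar x-\bar x_\epsilon)\le v(t,x)+\tfrac{1}{\epsilon}(|\bar t-\bar t_\epsilon|^2+|\bar x-\bar x_\epsilon|^2)$, and combining with $\varphi\le v_\epsilon$ near $(\bar t,\bar x)$ together with equality at $(\bar t,\bar x)$ one checks that $\bar\varphi\le v$ in a neighborhood of $(\bar t_\epsilon,\bar x_\epsilon)$ and $\bar\varphi(\bar t_\epsilon,\bar x_\epsilon)=v(\bar t_\epsilon,\bar x_\epsilon)$. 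Hence $\bar\varphi$ touches $v$ from below at $(\bar t_\epsilon,\bar x_\epsilon)$, and the viscosity supersolution definition in its localized form (Lemma \ref{def2}), applied to the auxiliary function $\bar\varphi_r$ built as in \eqref{16:57}, gives
\[
\partial_t\bar\varphi_r(\bar t_\epsilon,\bar x_\epsilon)+h(\bar t_\epsilon,\bar x_\epsilon)v(\bar t_\epsilon,\bar x_\epsilon)+\mathcal{L}_s(\Omega(\bar t_\epsilon,\bar x_\epsilon),\bar\varphi_r(\bar t_\epsilon,\bar x_\epsilon))\ge f(\bar t_\epsilon,\bar x_\epsilon).
\]

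To conclude one uses the translation invariance $\bar\varphi(\bar t_\epsilon,\bar x_\epsilon)-\bar\varphi(\bar t_\epsilon,\bar x_\epsilon+z)=\varphi(\bar t,\bar x)-\varphi(\bar t,\bar x+z)$ and $\partial_t\bar\varphi(\bar t_\epsilon,\bar x_\epsilon)=\partial_t\varphi(\bar t,\bar x)$, which thanks to the semiconcavity-based $C^{1,1}$ estimate at $(\bar t,\bar x)$ equals $\partial_t v_\epsilon(\bar t,\bar x)$. Under assumption \eqref{omegax}, for $r$ small enough $\Omegat(\bar t_\epsilon,\bar x_\epsilon)\cap B_r(0)=\Sigma\cap B_r(0)$, and so the singular part of the integral, which involves only $\varphi$, vanishes as $r\to 0$ by the symmetry $\Sigma=-\Sigma$ and the $C^{1,1}$ estimate \eqref{c11}; the nonsingular part rewrites in terms of $v_\epsilon(\bar t,\cdot)$ on $\Omega(\bar t_\epsilon,\bar x_\epsilon)\setminus B_r(0)$, to which Lemma \ref{continuity} applies (with $x_n=\bar x_\epsilon$, $\Theta(x_n)=\Omega(\bar t_\epsilon,\bar x_\epsilon)\setminus B_{1/n}(\bar x_\epsilon)$, and $\phi_n(\cdot)=v_\epsilon(\bar t,\cdot)$) to recover $\mathcal{L}_s(\Omega(\bar t_\epsilon,\bar x_\epsilon),v_\epsilon(\bar t,\bar x))$ in the limit. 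Letting $r\to 0$ in the inequality above yields \eqref{13:34due}. The main obstacle is merely the sign bookkeeping in dualizing sup-convolution to inf-convolution and subsolution to supersolution; everything else goes through verbatim as in Lemma \ref{pointsub}.
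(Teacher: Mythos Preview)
Your proof is correct and follows exactly the approach the paper itself indicates, namely a straightforward dualization of the argument of Lemma~\ref{pointsub} via the relation $v_\epsilon=-(-v)^\epsilon$. Note that you correctly obtain the inequality with $\ge$, which is what a supersolution yields; the $\le$ appearing in the displayed formula \eqref{13:34due} is evidently a typographical slip in the paper.
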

In the following Lemma we  eventually state that the difference of
a super- and a subsolution is still a supersolution.   
\begin{lemma}[Difference]\label{differenza}
Let us consider $h_1(t,x), h_2(t,x)$ that satisfy \eqref{alphap}, $\{\Omega_1(t,x)\}, \{\Omega_2(t,x)\}$ that satisfy \eqref{contpar},\eqref{omegax} and two functions $u\in USC_b((0,T)\times\Omega), v\in LSC_b((0,T)\times\Omega)$ that solve in the viscosity sense
\[
\partial_tu(t,x)+h_1(t,x)u(t,x)+\mathcal{L}_{s} (\Omega_1(t,x),u(t,x))\le f_1(t,x) \qquad \mbox{in }  (0,T)\times\Omega
\]
\[
\partial_tv(t,x)+h_2(t,x)v(t,x)+\mathcal{L}_{s} (\Omega_2(t,x),v(t,x))\ge f_2(t,x) \qquad \mbox{in }  (0,T)\times\Omega,
\]
respectively. Then $w=u-v$ solves in the viscosity sense
\[
 \partial_tw(t,x)+h_1(t,x)w(t,x)+\mathcal{L}_{s} (\Omega_1(t,x),w(t,x))\le \tilde{f}(x,t) \qquad \mbox{in }  (0,\infty)\times\Omega
\]
where
\[
\tilde{f}(x,t)=f_1({t} ,{x} )-f_2({t} ,{x} )+M|h_1({t} ,{x} )-h_2({t} ,{x} )|+ 2M\int_{ |z|\ge \frac{\zeta}{2}d(\bar x) }\frac{|\chi_{\Omegat_1({t} ,{x} )}-\chi_{\Omegat_2({t} ,{x} )}|}{|z|^{N+2s}}dz,\EEE
\]
\end{lemma}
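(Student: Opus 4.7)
The natural strategy is to regularize $u$ and $v$ by the supconvolution $u^{\epsilon}$ and the infconvolution $v_{\epsilon}$ defined in \eqref{supcon}-\eqref{infconv}, derive a pointwise inequality for $w^{\epsilon}=u^{\epsilon}-v_{\epsilon}$, and then pass to the limit with the stability Lemma \ref{stability}. Since $u\in \USC_b$ and $v\in \LSC_b$, $u^{\epsilon}$ is semiconvex and $v_{\epsilon}$ is semiconcave on $(0,T)\times\Omega$, both continuous and uniformly bounded by some $M$; in particular $w^{\epsilon}$ is continuous and $w^{\epsilon}\searrow u-v$ monotonically, hence $\Gamma$-converges to $w^{*}=u-v$ (which is itself USC as the sum of two USC functions).

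\textbf{Pointwise inequality for $w^{\epsilon}$.} Suppose $w^{\epsilon}$ is touched from above at $(\bar t,\bar x)\in(0,T)\times\Omega$ by a smooth $\varphi$. Exploiting semiconcavity of $v_{\epsilon}$, there exists $q$ such that $v_{\epsilon}(y)\le v_{\epsilon}(\bar t,\bar x)+q\cdot(y-(\bar t,\bar x))+C|y-(\bar t,\bar x)|^{2}$ locally, so that the estimate $u^{\epsilon}\le v_{\epsilon}+\varphi$ gives a smooth quadratic upper bound for $u^{\epsilon}$ at $(\bar t,\bar x)$; symmetrically, semiconvexity of $u^{\epsilon}$ produces a smooth quadratic lower bound for $v_{\epsilon}$ at the same point. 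This is exactly the situation in which Lemmas \ref{pointsub} and \ref{pointsup} apply, yielding
\begin{align*}
\partial_{t}u^{\epsilon}(\bar t,\bar x)+h_{1}(\bar t^{\epsilon},\bar x^{\epsilon})u^{\epsilon}(\bar t,\bar x)+\mathcal{L}_{s}(\Omega_{1}(\bar t^{\epsilon},\bar x^{\epsilon}),u^{\epsilon}(\bar t,\bar x))&\le f_{1}(\bar t^{\epsilon},\bar x^{\epsilon}),\\
\partial_{t}v_{\epsilon}(\bar t,\bar x)+h_{2}(\bar t_{\epsilon},\bar x_{\epsilon})v_{\epsilon}(\bar t,\bar x)+\mathcal{L}_{s}(\Omega_{2}(\bar t_{\epsilon},\bar x_{\epsilon}),v_{\epsilon}(\bar t,\bar x))&\ge f_{2}(\bar t_{\epsilon},\bar x_{\epsilon}),
\end{align*}
with $(\bar t^{\epsilon},\bar x^{\epsilon})\to(\bar t,\bar x)$ and $(\bar t_{\epsilon},\bar x_{\epsilon})\to(\bar t,\bar x)$ by \eqref{control}.

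\textbf{Bookkeeping the correction term.} Subtracting and using the identity $h_{1}u^{\epsilon}-h_{2}v_{\epsilon}=h_{1}w^{\epsilon}+(h_{1}-h_{2})v_{\epsilon}$ together with the linearity $\mathcal{L}_{s}(\Omega_{1},u^{\epsilon})-\mathcal{L}_{s}(\Omega_{2},v_{\epsilon})=\mathcal{L}_{s}(\Omega_{1},w^{\epsilon})+\bigl[\mathcal{L}_{s}(\Omega_{1},v_{\epsilon})-\mathcal{L}_{s}(\Omega_{2},v_{\epsilon})\bigr]$, one obtains
\[
\partial_{t}w^{\epsilon}+h_{1}w^{\epsilon}+\mathcal{L}_{s}(\Omega_{1},w^{\epsilon})\le (f_{1}-f_{2})+|h_{1}-h_{2}|\,\|v_{\epsilon}\|_{\infty}+\bigl|\mathcal{L}_{s}(\Omega_{1},v_{\epsilon})-\mathcal{L}_{s}(\Omega_{2},v_{\epsilon})\bigr|.
\]
The crucial point for the last term is that by assumption \eqref{omegax}, both $\tilde\Omega_{1}(\bar t^{\epsilon},\bar x^{\epsilon})$ and $\tilde\Omega_{2}(\bar t_{\epsilon},\bar x_{\epsilon})$ coincide with $\Sigma$ on $B_{\zeta d(\bar x)/2}(0)$ for $\epsilon$ small; hence $\chi_{\tilde\Omega_{1}}-\chi_{\tilde\Omega_{2}}$ vanishes on this ball and the nonlocal difference reduces to a non-singular integral bounded by $2M\int_{|z|\ge \zeta d(\bar x)/2}|\chi_{\tilde\Omega_{1}}-\chi_{\tilde\Omega_{2}}||z|^{-N-2s}dz$, matching the definition of $\tilde f$.

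\textbf{Passage to the limit and main obstacle.} Since $(\bar t^{\epsilon},\bar x^{\epsilon}),(\bar t_{\epsilon},\bar x_{\epsilon})\to(\bar t,\bar x)$ and $h_{i},f_{i},\Omega_{i}(\cdot)$ are continuous in their arguments, the right-hand side converges to $\tilde f(\bar t,\bar x)$ up to $o_{\epsilon}(1)$, so that $w^{\epsilon}$ is a viscosity subsolution of a perturbation of the target equation. Invoking the $\Gamma$-convergence $w^{\epsilon}\to u-v$ together with Lemma \ref{stability} (applied with $\Omega_{n}=\Omega_{1}$ fixed and $f_{n}=\tilde f+o_{\epsilon}(1)$) delivers the claim. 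The principal technical difficulty is the first step: transferring the smooth touch of $w^{\epsilon}$ into separate smooth touches of $u^{\epsilon}$ from above and $v_{\epsilon}$ from below; this is exactly where the one-sided $C^{1,1}$-regularity provided by the sup/inf-convolutions is essential, and where the structural hypothesis \eqref{omegax} is needed to tame the kernel singularity in the operator-difference term.
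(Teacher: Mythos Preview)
Your proposal is correct and follows essentially the same route as the paper: regularize via sup/inf-convolution, exploit the resulting two-sided $C^{1,1}$ regularity at a touching point to invoke Lemmas~\ref{pointsub}--\ref{pointsup} pointwise, subtract, use \eqref{omegax} to kill the singular part of the operator-difference, and pass to the limit through Lemma~\ref{stability}. One small imprecision: in the $\epsilon$-problem the integration domain is $\Omega_{1}(\bar t^{\epsilon},\bar x^{\epsilon})$ rather than $\Omega_{1}(\bar t,\bar x)$, so it is not literally ``fixed'' but converges to $\Omega_{1}(\bar t,\bar x)$ via \eqref{control} and \eqref{contpar}; this is exactly what the $\Omega_{n}\!\to\!\Omega$ slot in Lemma~\ref{stability} is designed for.
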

with $M=\max\{\|u\|_{L^{\infty}((0,T)\times\Omega)},\|v\|_{L^{\infty}((0,T)\times\Omega)}\}$.
\begin{proof}
Recalling definitions \eqref{supcon} and \eqref{infconv}, let us
consider the function
$w^{\epsilon}(t,x)=u^{\epsilon}(t,x)-v_{\epsilon}(t,x)$ and assume
that it is touched from above by a $\varphi\in
C^2((0,\infty)\times\Omega)$ at point $ (\bar{t}, \bar{x})$. This 
means  that
\[
u^{\epsilon}(\bar{t}, \bar{x})-v_{\epsilon}(\bar{t},
\bar{x})=\varphi(\bar{t}, \bar{x}) \ \ \ \mbox{and} \ \ \ 
u^{\epsilon}-v_{\epsilon}\le \varphi  %\ \ \ u^{\epsilon}\le
                                %v_{\epsilon}+\varphi \ \ \
                                %u^{\epsilon}-\varphi \le v_{\epsilon}
\ \ \mbox{in} \ \ \Omega.
\]

 This latter fact, together with the semiconvexity property of both
 $u^{\epsilon}$ and $-v_{\epsilon}$, implies that $u^{\epsilon}$ and
 $-v_{\epsilon}$ are $C^{1,1}(\bar{t},\bar{x})$ (see
 \eqref{c11}). We are hence in the position of applying
 Lemmas \ref{pointsub} and \ref{pointsup} and evaluating the
 inequalities satisfied by $u^{\epsilon}$ and $v_{\epsilon}$ pointwise. We have that
\[
\partial_tu^{\epsilon} (\bar{t}, \bar{x})+h_1(\bar{t}^{\epsilon},\bar{x}^{\epsilon})u^{\epsilon} (\bar{t}, \bar{x})+\mathcal{L}_{s}(\Omega_1(\bar{t}^{\epsilon},\bar{x}^{\epsilon}),u^{\epsilon} (\bar{t}, \bar{x}))\le f_1(\bar{t}^{\epsilon},\bar{x}^{\epsilon}),
\]
and that
\[
\partial_tv_{\epsilon} (\bar{t},
\bar{x})+h_2(\bar{t}_{\epsilon},\bar{x}_{\epsilon})v_{\epsilon}
(\bar{t}, \bar{x})+\mathcal{L}_{s}(\Omega_2(
\bar{t}_{\epsilon},\bar{x}_{\epsilon}), v_{\epsilon} (\bar{t}, \bar{x}))\ge f_2(\bar{t}_{\epsilon},\bar{x}_{\epsilon}).
\]
Recalling the ordering assumption between $w^{\epsilon}$ and $\varphi$ and combining the two inequalities above, we infer that, for $\epsilon$ small enough,
\begin{align}
  \label{01-06tris}
&\partial_t \varphi_r(\bar t,\bar
  x)+h_1(\bar{t}^{\epsilon},\bar{x}^{\epsilon})\varphi_r(\bar t,\bar
  x)+ \mathcal{L}_s 
  (\Omega_1(\bar{t}^{\epsilon},\bar{x}^{\epsilon}),  \varphi_r(\bar
  t,\bar  x))\\[2mm]
  &\nonumber \quad \le
\partial_t w^{\epsilon}(\bar t,\bar
    x)+h_1(\bar{t}^{\epsilon},\bar{x}^{\epsilon})w^{\epsilon}(\bar t,\bar
    x)+\mathcal{L}_s(\Omega_1(\bar{t}^{\epsilon},\bar{x}^{\epsilon}),
    w^{\epsilon}(\bar t,\bar  x))\\[2mm]
  &\nonumber \quad 
\le
    f_1(\bar{t}^{\epsilon},\bar{x}^{\epsilon})-f_2(\bar{t}_{\epsilon},\bar{x}_{\epsilon})+M|h_1(\bar{t}^{\epsilon},\bar{x}^{\epsilon})-h_2(\bar{t}_{\epsilon},\bar{x}_{\epsilon})|\\
  &\nonumber \qquad +2M\int_{\mathbb{R}^N}\frac{|\chi_{\Omegat_1(\bar{t}^{\epsilon},\bar{x}^{\epsilon})}-\chi_{\Omegat_2(\bar{t}_{\epsilon},\bar{x}_{\epsilon})}|}{|z|^{N+2s}}dz.
\end{align}
Let us stress that, for $\epsilon$ small enough, the integral term in the right hand side above is finite. Indeed, thanks to the assumption \eqref{omegax} and since $(\bar{t}^{\epsilon},\bar{x}^{\epsilon})\to (\bar{t},\bar{x})$ and $(\bar{t}_{\epsilon},\bar{x}_{\epsilon})\to (\bar{t},\bar{x})$  as $\epsilon\to 0$, then 
\[
\exists \ \epsilon_0>0 \ : \ \forall \epsilon\in(0,\epsilon_0) \ \ \ B_{\frac{\zeta}{2}d(\bar x)}\cap \Omegat(\bar{t}^{\epsilon},\bar{x}^{\epsilon})= B_{\frac{\zeta}{2}d(\bar x)}\cap \Omegat(\bar{t}_{\epsilon},\bar{x}_{\epsilon})=B_{\frac{\zeta}{2}d(\bar x)}\cap \Sigma.
\]
This implies that
\begin{align*}
&\int_{\mathbb{R}^N}\frac{|\chi_{\Omegat_1(\bar{t}^{\epsilon},\bar{x}^{\epsilon})}-\chi_{\Omegat_2(\bar{t}_{\epsilon},\bar{x}_{\epsilon})}|}{|z|^{N+2s}}dz=2M\int_{|z|\ge
                 \frac{\zeta}{2}d(\bar
                 x)}\frac{|\chi_{\Omegat_1(\bar{t}^{\epsilon},\bar{x}^{\epsilon})}-\chi_{\Omegat_2(\bar{t}_{\epsilon},\bar{x}_{\epsilon})}|}{|z|^{N+2s}}dz\\
  &\qquad \le  \left(\frac{2}{\zeta d(\bar x)}\right)^{N+2s}|\Omegat_1(\bar{t}^{\epsilon},\bar{x}^{\epsilon})\triangle\Omegat_2(\bar{t}_{\epsilon},\bar{x}_{\epsilon})|.
\end{align*}
Since \eqref{01-06tris} is true any time that $w^{\epsilon}$ is touched from above by a smooth $\varphi$ at some point in $(0,T)\times\Omega$, we can conclude that $w^{\epsilon}$ solves in the viscosity sense
\[
\partial_t w^{\epsilon}( t,
    x)+h_{\epsilon}({t}^{\epsilon},{x}^{\epsilon})w^{\epsilon}( t,
    x)+\mathcal{L}_s(\Omega_1({t}^{\epsilon},{x}^{\epsilon}),
    w^{\epsilon}( t,  x)) \le f^{\epsilon}( t,  x),
\]
where
\begin{align*}
&h_{\epsilon}(t,x)=h_1({t}^{\epsilon},{x}^{\epsilon}),\\
&\Omega_{\epsilon}(t,x)=\Omega_1({t}^{\epsilon},{x}^{\epsilon}),\\
&f^{\epsilon}(\bar t, \bar x)=  f_1(\bar{t}^{\epsilon},\bar{x}^{\epsilon})-f_2(\bar{t}_{\epsilon},\bar{x}_{\epsilon})+M|h_1(\bar{t}^{\epsilon},\bar{x}^{\epsilon})-h_2(\bar{t}_{\epsilon},\bar{x}_{\epsilon})|  +2M\int_{|z|\ge \frac{\zeta}{2}d(\bar x)}\frac{|\chi_{\Omegat_1(\bar{t}^{\epsilon},\bar{x}^{\epsilon})}-\chi_{\Omegat_2(\bar{t}_{\epsilon},\bar{x}_{\epsilon})}|}{|z|^{N+2s}}dz
\end{align*}
and the point ${t}^{\epsilon},{x}^{\epsilon}$ is related to $( t,  x)$ through \eqref{01-06bis} and \eqref{control}. Thanks to Lemma \ref{stability}, we can pass to the limit in \eqref{01-06tris} as $\epsilon\to 0$ and obtain the desired result. 
\end{proof}

For later purpose we also explicitly state an elliptic version of Lemma \ref{differenza}. 
\begin{cor}\label{ellipticdif}
Assume \eqref{alpha}-\eqref{ostationary}, that $f_1,f_2\in C(\Omega)$ satisfy \eqref{fcon} and that $u\in USC_b(\Omega)$, $v\in LSC_b(\Omega)$ solve
\[
h(x)u(x)+\mathcal{L}_{s} (\Omega(x),u(x))\le f_1(x) \qquad \mbox{in } \Omega
\]
\[
h_2(x)v(x)+\mathcal{L}_{s} (\Omega(x),v(x))\ge f_2(x) \qquad \mbox{in } \Omega,
\]
respectively. Then $w=u-v$ solves
\[
\partial_tw(x)+h(x)w(x)+\mathcal{L}_{s} (\Omega(x),w(x))\le f_1(x)-f_2(x) \qquad \mbox{in }  \Omega.
\]
\end{cor}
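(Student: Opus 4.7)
The plan is to reduce to the parabolic Lemma \ref{differenza} via the lift introduced in Remark \ref{ellipticparabolic}. Fix $T>0$ and define the time-independent lifts $\tilde{u}(t,x):=u(x)$, $\tilde{v}(t,x):=v(x)$, $\tilde{h}(t,x):=h(x)$, $\tilde{\Omega}(t,x):=\Omega(x)$, $\tilde{f}_i(t,x):=f_i(x)$ on $(0,T)\times\Omega$. By Remark \ref{ellipticparabolic}, $\tilde{u}$ and $\tilde{v}$ are respectively a viscosity sub- and supersolution of the corresponding parabolic problem, and the standing assumptions \eqref{alpha}--\eqref{ostationary} on $h$ and $\Omega(\cdot)$ imply that $\tilde{h}$ and $\tilde{\Omega}(\cdot,\cdot)$ satisfy the time-dependent hypotheses \eqref{contpar}--\eqref{alphap} trivially (with no $t$-dependence); the condition \eqref{fconintro} is inherited from \eqref{fcon}.

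Next, apply Lemma \ref{differenza} with $h_1=h_2=\tilde{h}$ and $\Omega_1(t,x)=\Omega_2(t,x)=\tilde{\Omega}(t,x)$. In the right-hand side $\tilde{f}$ produced by that lemma, both correction terms vanish identically: the term $M|h_1-h_2|$ is zero, and the integral $\int_{|z|\ge \frac{\zeta}{2}d(\bar{x})}|\chi_{\tilde{\Omega}_1}-\chi_{\tilde{\Omega}_2}||z|^{-N-2s}\,dz$ is zero, since $\tilde{\Omega}_1\equiv\tilde{\Omega}_2$. Consequently $\tilde{w}:=\tilde{u}-\tilde{v}$ satisfies, in the viscosity sense on $(0,T)\times\Omega$,
\[
\partial_t \tilde{w}(t,x)+\tilde{h}(t,x)\tilde{w}(t,x)+\mathcal{L}_s(\tilde{\Omega}(t,x),\tilde{w}(t,x))\le \tilde{f}_1(t,x)-\tilde{f}_2(t,x).
\]

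Finally, project back to the stationary setting. If $\varphi\in C^2(\Omega)$ touches $w=u-v$ from above at $x_0\in\Omega$, then the lift $\tilde{\varphi}(t,x):=\varphi(x)$ touches $\tilde{w}$ from above at every $(t_0,x_0)\in(0,T)\times\{x_0\}$, and $\partial_t\tilde{\varphi}\equiv 0$. Inserting this into the parabolic viscosity inequality above yields precisely
\[
h(x_0)\varphi(x_0)+\mathcal{L}_s(\Omega(x_0),\varphi(x_0))\le f_1(x_0)-f_2(x_0),
\]
which is the desired elliptic inequality (the $\partial_t w$ appearing in the statement is a residual typo from the parabolic template and is identically zero for stationary $w$).

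There is no serious obstacle: the sup-/inf-convolution machinery and Lemmas \ref{pointsub}--\ref{pointsup} have already been set up in the time-dependent formulation, so the corollary is essentially free. The only point requiring a small check is that Remark \ref{ellipticparabolic}, stated as a general principle, applies verbatim to sub- and supersolutions of our specific integro-differential equation; this is immediate from Definition \ref{def1} since the time derivative of any smooth test function touching a stationary graph at an interior point of $(0,T)\times\Omega$ may be taken to vanish by adding $-\theta|t-t_0|^2$ with $\theta\to 0^+$ and invoking Lemma \ref{continuity}.
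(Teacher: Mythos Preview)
Your proposal is correct and follows exactly the route the paper indicates: the paper does not give a proof but simply refers to Remark \ref{ellipticparabolic}, i.e., lift to the parabolic setting, apply Lemma \ref{differenza} with $h_1=h_2$ and $\Omega_1=\Omega_2$ so that the correction terms vanish, and read off the elliptic inequality. Your handling of the residual typos ($h_2$ versus $h$, and the spurious $\partial_t w$) matches the intended statement.
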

\begin{remark}
   For the sake of brevity, we do not provide a proof of Corollary \ref{ellipticdif}, see Remark \ref{ellipticparabolic}.
\end{remark}

\subsection{Regularity}
 By adapting the regularity theory for fully nonlinear
 integro-differential equations from  \cite[Sec. 14]{caff} we can
 prove the following. 
\begin{teo}[H\"older regularity]\label{regelliptic}
Let us assume \eqref{alpha}-\eqref{Sigmadef}, that $f\in C(\Omega)\cap\elle{\infty}$, and that $u\in C(\Omega)\cap\elle{\infty}$ solves in the viscosity sense
\[
 h(x)u(x)+\mathcal{L}_{s}(\Omega(x), u(x) )= f(x) \quad \mbox{in } \Omega.
\]
Then, for any open sets $\Omega'\subset\subset \Omega''\subset\subset\Omega$, it follows that
\[
\|u\|_{C^{\gamma}(\Omega')}\le \tilde C,
\]
where $\gamma \in (0,1)$ and $\tilde C=\tilde C(\|f\|_{\elle{\infty}}, s, \zeta, d(\Omega'', \Omega'),\|u\|_{\elle{\infty}})$.
\end{teo}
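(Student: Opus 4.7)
The strategy is to adapt the interior Hölder regularity theory for integro-differential equations developed in \cite{caff}, \cite{schwabsil} and \cite{mou} to our setting. The crucial observation is that on $\Omega''\subset\subset\Omega$ the distance to the boundary is bounded below by some $d_0=d(\Omega'',\partial\Omega)>0$, so the zeroth-order coefficient $h$ is bounded on $\Omega''$ by $\beta/d_0^{2s}$, and the scale $\zeta d(x)$ below which $\tilde\Omega(x)$ coincides with $\Sigma$ is bounded below by $\zeta d_0$. Hence the operator, restricted to $\Omega''$, is in fact translation invariant at a uniform positive scale, modulo a bounded tail and a bounded zeroth-order perturbation.

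The first step is to localize the equation. For any $x_0\in\Omega'$ set $r_0=\min\{\zeta d_0,\,d(\Omega',\partial\Omega'')\}$. For $x\in B_{r_0}(x_0)\subset\Omega''$, assumption \eqref{ostationary} allows us to split
\[
\mathcal{L}_s(\Omega(x),u(x)) = \mathrm{p.v.}\!\!\int_{\Sigma\cap B_{\zeta d_0}(0)}\!\!\frac{u(x)-u(x+z)}{|z|^{N+2s}}\,dz \,+\, \int_{\tilde\Omega(x)\setminus B_{\zeta d_0}(0)}\!\!\frac{u(x)-u(x+z)}{|z|^{N+2s}}\,dz.
\]
The tail integral is pointwise bounded by $C\|u\|_\infty(\zeta d_0)^{-2s}$, and together with $h(x)u(x)$ it can be absorbed into an effective right-hand side. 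Using the localized equivalent definition of viscosity solution from Lemma \ref{def2}, together with the continuity Lemma \ref{continuity}, one verifies that $u$ is a viscosity solution on $B_{r_0}(x_0)$ of the translation-invariant equation
\[
\mathrm{p.v.}\int_{\Sigma\cap B_{\zeta d_0}(0)}\frac{u(x)-u(x+z)}{|z|^{N+2s}}\,dz = \tilde f(x),
\]
with $\|\tilde f\|_{L^\infty(B_{r_0}(x_0))}\le C(\|f\|_\infty,\|u\|_\infty,\beta,\zeta,d_0,s)$.

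Next, I would invoke the interior Hölder estimate for translation-invariant nonlocal equations with kernel $\chi_\Sigma(z)/|z|^{N+2s}$. Thanks to the symmetry $\Sigma=-\Sigma$ and the non-degeneracy bound $|\Sigma\cap(B_{2r}\setminus B_r)|\ge q|B_{2r}\setminus B_r|$ from \eqref{Sigmadef}, such a kernel falls within the class of operators covered by \cite[Theorem 4.6]{mou} (or equivalently the classes treated in \cite[Sec.~12--14]{caff} and in \cite{schwabsil}). These results, applied to our bounded viscosity solution with bounded $\tilde f$, give
\[
\|u\|_{C^\gamma(B_{r_0/2}(x_0))}\le C\bigl(\|u\|_{L^\infty}+\|\tilde f\|_{L^\infty}\bigr),
\]
for some $\gamma\in(0,1)$ and $C$ depending only on $s,q,N,\zeta,d_0$. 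Since $x_0\in\Omega'$ is arbitrary and $r_0$ depends only on $\zeta d_0$ and $d(\Omega',\Omega'')$, a standard covering argument yields the desired estimate on $\Omega'$.

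The main obstacle is the rigorous justification of the localization step, namely that the truncation of the nonsymmetric, space-dependent operator to the symmetric, translation-invariant piece on $B_{\zeta d_0}(0)\cap\Sigma$ preserves the viscosity subsolution and supersolution inequalities with an $L^\infty$ perturbation of the right-hand side. This is delicate because the kernel singularity at the origin must be handled uniformly in $x\in B_{r_0}(x_0)$, but it is precisely what condition \eqref{ostationary} is designed to allow; the argument follows the pattern already used, e.g., in Lemma \ref{pointsub}. A secondary but less delicate point is to check that the density condition \eqref{Sigmadef} is indeed enough to match the ellipticity hypotheses of the cited regularity theorem; this is essentially the content of the remark, made after assumption \eqref{Sigmadef} in the introduction, that our condition is a simplified version of (A3) of \cite{schwabsil}.
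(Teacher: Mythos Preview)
Your proposal is correct and follows essentially the same route as the paper: reduce the $x$-dependent operator to the translation-invariant one with kernel $\chi_\Sigma(z)|z|^{-N-2s}$ by absorbing the tail and the zeroth-order term $h(x)u(x)$ into a bounded effective right-hand side, then invoke \cite[Theorem~4.6]{mou}. The paper carries out the reduction on all of $\Omega$ with the variable cutoff $\zeta d(x)$ (so its $\tilde f$ blows up at the boundary), whereas you work on $\Omega''$ with the fixed cutoff $\zeta d_0$; this is an inessential variant. Your identification of the delicate point---justifying the viscosity inequality for the truncated operator---is accurate, and the paper resolves it exactly as you anticipate, via sup/inf-convolution (Lemma~\ref{pointsub}) followed by the stability Lemma~\ref{stability}.
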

\begin{proof}
We claim that
\[
 \mathcal{L}_{s}(\Sigma({x}),u(x))= \tilde f(x) \quad \mbox{in } \Omega
\]
 in the viscosity sense, where $\Sigma({x})=\Sigma+x$ and $\tilde f\in
 C(\Omega)$ is a suitable function such that $\tilde f(x)\approx
 d(x)^{-2s}$ close to $\partial\Omega$. Once such a property is
 verified, the proof of the Lemma follows from \cite[Theorem
 4.6]{mou}. See also \cite[Theorem 7.2]{schwabsil}, were the parabolic
 problem is treated.

In order to prove the claim, we follow the ideas of \cite[Sec. 14]{caff}. Let us assume that, any time $u$ is touched from above with a smooth function at some $x\in \Omega$, $u$ belongs to $C^{1,1}( x)$. Using Lemma \ref{continuity}, we deduce that
\[
h({x})u( x)+\mathcal{L}_{s} (\Omega({x}),u( x))\le f(x)
\]
pointwise for any such a $x\in \Omega$. Thanks to assumption \eqref{ostationary}, the nonlocal
operator can be estimated as follows 
\begin{align*}
  &
\mathcal{L}_{s}(\Omega({x}),
     u(x))=\int_{\Sigma\cap B_{\zeta d({x}^{\epsilon})}}\frac{ u(    {x} )- u(x
    +z )}{|z|^{N+2s}}dz+\int_{\Omegat(x)\setminus B_{\zeta d(x)}}\frac{ u(
    x )- u( x+z )}{|z|^{N+2s}}dz\\
  &\quad
=\int_{\Sigma}\frac{ u(    x )- u( x+z
    )}{|z|^{N+2s}}dz+\int_{\Omegat(x)\setminus B_{\zeta d(x)}}\frac{ u(
    x )- u( x+z )}{|z|^{N+2s}}dz-\int_{\Sigma\setminus B_{\zeta d(x)}}\frac{ u(
    x )- u( x+z )}{|z|^{N+2s}}dz\\
  &\quad
=  \mathcal{L}_{s}(\Sigma({x}),
     u(x)).
\end{align*}
Let us set
\[
\tilde f(x)=f(x)-\int_{\Omegat(x)\setminus B_{\zeta d(x)}}\frac{ u(
    x )- u( x+z )}{|z|^{N+2s}}dz+\int_{\Sigma\setminus B_{\zeta d(x)}}\frac{ u(
    x )- u( x+z )}{|z|^{N+2s}}dz.
\]
This proves that
\[
  \mathcal{L}_{s}(\Sigma({x}),
     u(x)) \le \tilde f(x),
\]
assuming that $u$ belongs to $C^{1,1}(x)$. Let us apply this argument to the sup convolution $u^{\epsilon}$.
By definition of the sup convolution, we recall that, any time $u^{\epsilon}$ is touched from above by a smooth function $\varphi$ at $x\in \Omega$, then $u^{\epsilon}\in C^{1,1}(x)$. Moreover, thanks to Theorem \ref{pointsub}, we have that
\[
h(x^{\epsilon})u(x)+\mathcal{L}_{s}(\Omega(x^{\epsilon}), u(x) )\le f(x^{\epsilon})
\]
point wise for any such a $x\in\Omega $. Then, thanks to the argument above, it follows that
\[
  \mathcal{L}_{s}(\Sigma({x^{\epsilon}}),
     u^{\epsilon}(x)) \le \tilde f(x^{\epsilon}).
\]
Eventually, thanks to the stability property of viscosity solution (see Lemma \ref{stability}), we can pass to the limit in the inequality above to conclude that
\[
  \mathcal{L}_{s}(\Sigma({x}),
     u(x)) \le \tilde f(x),
\]
in the viscosity sense. Similarly, one can check that 
\[
 \mathcal{L}_{s}(\Sigma({x}),
     u(x))\ge \tilde f(x),
\]
and the proof of the initial claim follows.
\end{proof}

\subsection{Equivalence with the fractional laplacian}
We now present two technical lemmas, shedding light on  the
relation between the operator in \eqref{10:17} and the classical
 fractional laplacian. 

\begin{lemma}[Equivalence]\label{acca}
The function defined in  \eqref{15:15} can be equivalently written as
\[
a(x)=\Gamma(2s+1)\int_{\tilde{S}(x)^c}\frac{1}{|z|^{N+2s}},
\]
where $S(x)$ is the largest star-shaped  subset of $\Omega$
centered at $x$ and $\tilde{S}(x)=x-S(x)$.

If  $\Omega$ is convex, the fractional laplacian
$(-\Delta)_s$ defined in \eqref{decomposition} is 
equivalent to the elliptic operator defined in \eqref{start},
as  $$\Gamma(2s+1) (-\Delta)_s\varphi(x) = a(x) \varphi (x)+ (-\Delta)^\star_s\varphi (x)$$ on
suitably smooth
function $\varphi$.
\end{lemma}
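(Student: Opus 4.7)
My strategy is to reduce both sides of the first identity to the same spherical integral over $S^{N-1}$ via polar coordinates, and then to deduce Part 2 as a direct consequence of the decomposition \eqref{decomposition}.

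For the equivalent formula for $a(x)$, note that $d(x,\sigma(y))$ depends only on the direction $\omega = y/|y|$. Setting $R(\omega):=d(x,\omega)$ for the length of the segment joining $x$ to $\partial\Omega$ along $\omega$ (positive and bounded on $S^{N-1}$ since $\Omega$ is bounded and $x\in\Omega$), the change to polar coordinates $y = r\omega$ in \eqref{15:15} gives
\begin{equation*}
a(x) = \int_{S^{N-1}}\int_0^\infty r^{-1-2s}\,e^{-R(\omega)/r}\,dr\,d\omega .
\end{equation*}
The inner integral is a disguised Gamma integral: the substitution $u = R(\omega)/r$ turns it into $R(\omega)^{-2s}\int_0^\infty u^{2s-1}e^{-u}\,du = \Gamma(2s) R(\omega)^{-2s}$, so
\begin{equation*}
a(x) = \Gamma(2s)\int_{S^{N-1}} R(\omega)^{-2s}\,d\omega.
\end{equation*}

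For the other expression, star-shapedness of $S(x)$ at $x$ yields the radial representation $S(x) = \{x+t\omega\,:\,0\le t<R(\omega),\ \omega\in S^{N-1}\}$, whence $\tilde S(x) = x - S(x) = \{-t\omega\,:\,0\le t<R(\omega)\}$; in polar form $z = r\omega'$ this is the set $\{r\omega'\,:\,r<R(-\omega')\}$. Therefore
\begin{equation*}
\int_{\tilde S(x)^c}\frac{dz}{|z|^{N+2s}} = \int_{S^{N-1}}\int_{R(-\omega')}^\infty r^{-1-2s}\,dr\,d\omega' = \frac{1}{2s}\int_{S^{N-1}} R(\omega)^{-2s}\,d\omega,
\end{equation*}
after the change of variable $\omega = -\omega'$ on $S^{N-1}$. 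Multiplying by $\Gamma(2s+1) = 2s\,\Gamma(2s)$ cancels the factor $1/(2s)$ and returns precisely the expression for $a(x)$, establishing the first claim.

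For Part 2, convexity of $\Omega$ makes it star-shaped with respect to every one of its points, so $S(x)\equiv \Omega$ and $\tilde S(x)\equiv \tilde\Omega(x)$. The decomposition \eqref{decomposition} reads $(-\Delta)_s\varphi(x) = (-\Delta)_s^\Omega\varphi(x)+k(x)\varphi(x)$, in which $(-\Delta)_s^\Omega\varphi \equiv (-\Delta)_s^\star\varphi$ because $\Omega(x)=\Omega$. The change of coordinates $z=x-y$ rewrites
\begin{equation*}
k(x) = \int_{\mathbb{R}^N\setminus\Omega}\frac{dy}{|x-y|^{N+2s}} = \int_{\tilde\Omega(x)^c}\frac{dz}{|z|^{N+2s}} = \frac{a(x)}{\Gamma(2s+1)}
\end{equation*}
by Part 1. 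Substitution and multiplication by $\Gamma(2s+1)$ then produces the stated identity, with the constant $\Gamma(2s+1)$ playing the normalization role already flagged around \eqref{start}.

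\textbf{Main obstacle.} The computation is essentially a Fubini-plus-substitution exercise, and no singularity issue arises since $R(\omega)\ge d(x)>0$ uniformly. The one point requiring care is the direction reversal induced by the map $y\mapsto x-y$: the radial profile of $\tilde S(x)$ in direction $\omega'$ is $R(-\omega')$, not $R(\omega')$, and the final antipodal change of variable on $S^{N-1}$ is needed to match the two spherical integrals exactly, without a stray factor of two.
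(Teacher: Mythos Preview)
Your proof is correct and follows essentially the same route as the paper: polar coordinates turn \eqref{15:15} into $\Gamma(2s)\int_{S^{N-1}}R(\omega)^{-2s}\,d\omega$, the complement integral is computed radially and matched via $\Gamma(2s+1)=2s\,\Gamma(2s)$, and the convex case is handled through the decomposition \eqref{decomposition} with $S(x)\equiv\Omega$. If anything, you are slightly more careful than the paper in tracking the antipodal reflection $\omega\mapsto-\omega$ induced by $z=x-y$; the paper writes the radial bound for $\tilde S(x)^c$ directly as $d(x,\sigma)$ without comment, which is harmless since the spherical integral is invariant under that reflection.
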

\begin{proof}
We firstly  notice that
\begin{align*}
a(x)&=\int_{\mathbb{R}^N}\frac{1}{|y|^{N+2s}}e^{-\frac{d(x,\sigma(y))}{|y|}}dy\\
&
=\int_{\omega^{N-1}}\int_0^{\infty}\frac{1}{\rho^{1+2s}}e^{-\frac{d(x,\sigma)}{\rho}}d\rho
  d\sigma=\int_{\omega^{N-1}}\frac{1}{d(x,\sigma)^{2s}}d\sigma
  \int_0^{\infty}\frac{1}{r^{1+2s}}e^{-\frac{1}{r}}dr\\
&
=\int_{\omega^{N-1}}\frac{1}{d(x,\sigma)^{2s}}d\sigma\int_0^{\infty}t^{2s-1}e^{-t}dt=\Gamma(2s)\int_{\omega^{N-1}}\frac{d\sigma}{d(x,\sigma)^{2s}}
\end{align*}
where we recall that $d(x,\sigma(y))$ denotes the distance
between $x$ and the first point reached on $\partial \Omega$ by the ray from
$x$ with direction $\sigma(y) = y/|y|$.
On the other hand, we have that
\[
\int_{\tilde{S}(x)^c}\frac{1}{|z|^{N+2s}}dz=\int_{\omega^{N-1}}\int_{d(x,\sigma)}\rho^{-1-2s}d\rho d\sigma=\frac{1}{2s}\int_{\omega^{N-1}}\frac{d\sigma}{d(x,\sigma)^{2s}}.
\] 
The conclusion follows from the fact that $\Gamma (2s+1)=\Gamma(2s)2s$.\\

Assume now that $\Omega$ is convex. Then $S(x)\equiv\Omega$ for any $x\in\Omega$ and
\[
a(x)=\Gamma(2s+1)\int_{\{x-\Omega\}^c}\frac{1}{|z|^{N+2s}}dz=\Gamma(2s+1)\int_{\Omega^c}\frac{1}{|x-y|^{N+2s}}dy.
\]
Then, recalling \eqref{start}, we have that, for any $\varphi\in C^{\infty}_c(\Omega)$,
\[
a(x)\varphi(x)+(-\Delta)_{s}^{\star}\varphi(x)=\Gamma(2s+1)\left[\int_{\Omega^c}\frac{1}{|x-y|^{N+2s}}dy \ \varphi(x)+p.v.\int_{\Omega}\frac{\varphi(x)-\varphi(y)}{|x-y|^{N+2s}}dy\right]
\]
\[
=\Gamma(2s+1)\ p.v.\int_{\mathbb{R}^N}\frac{\varphi(x)-\varphi(y)\chi_{\Omega}}{|x-y|^{N+2s}}dy=\Gamma(2s+1)(-\Delta)_ {s}\varphi (x).\qedhere
\]
\end{proof}

We now introduce a class of domains for which the
function $k(x)$ defined in \eqref{decomposition} satisfies the
bounds \eqref{killing}. To this aim, we assume that the
complement $\Omega^c $  satisfies a uniform positive density
condition, namely that there exists $\rho_0>0$ and
$\kappa >0 $ such that
\be\label{density}
|B_{\rho}(\bar x)\cap \Omega^c|\ge \kappa  |B_{\rho}(\bar x)| \ \ \ \mbox{for all } \bar x\in\partial\Omega \ \mbox{ and } \ \rho\in (0,\rho_0).
\ee
Let us stress that \eqref{density} is weaker than the  exterior cone condition.

\begin{lemma}[Bounds on $h$]\label{hbeha}
Let $\Omega$ be an open bounded set of $\mathbb R^N$ with
$\Omega^c$ satisfying  \eqref{density}. Then, the function 
\[
h (x)=\int_{\Omega^c}\frac{1}{|x-y|^{N+2s}}dy
\]
satisfies the bounds  \eqref{killing}.
\end{lemma}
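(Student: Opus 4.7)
The plan is to handle the two bounds separately by elementary geometric considerations, exploiting the nearest-point projection onto $\partial\Omega$ together with the uniform positive density condition \eqref{density}.

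For the upper bound, observe that by definition $B_{d(x)}(x)\subset \Omega$, so every $y\in\Omega^c$ satisfies $|x-y|\ge d(x)$. Passing to polar coordinates centered at $x$,
\[
h(x)\le \int_{|x-y|\ge d(x)}\frac{dy}{|x-y|^{N+2s}}=\sigma_{N-1}\int_{d(x)}^{\infty}r^{-1-2s}\,dr=\frac{\sigma_{N-1}}{2s}\,\frac{1}{d(x)^{2s}},
\]
where $\sigma_{N-1}$ is the surface measure of the unit sphere. This gives $\beta=\sigma_{N-1}/(2s)$.

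For the lower bound, let $\bar x\in\partial\Omega$ be a point realizing the distance, so that $|x-\bar x|=d(x)$, and set $\rho=\min\{d(x),\rho_0\}$. Since $\rho\le\rho_0$, the density condition \eqref{density} applies and yields $|B_{\rho}(\bar x)\cap \Omega^c|\ge \kappa\, c_N\rho^N$, where $c_N$ is the volume of the unit ball. On the other hand, for every $y\in B_{\rho}(\bar x)$ the triangle inequality gives $|x-y|\le d(x)+\rho\le 2d(x)$ (in both regimes). Restricting the integral defining $h(x)$ to $B_\rho(\bar x)\cap \Omega^c$, we obtain
\[
h(x)\ge \frac{|B_{\rho}(\bar x)\cap \Omega^c|}{(2d(x))^{N+2s}}\ge \frac{\kappa\, c_N}{2^{N+2s}}\,\frac{\rho^N}{d(x)^{N+2s}}.
\]
If $d(x)\le \rho_0$ then $\rho=d(x)$ and the right-hand side is exactly $(\kappa c_N/2^{N+2s})d(x)^{-2s}$. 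If instead $d(x)>\rho_0$, then $\rho=\rho_0$, and using $d(x)\le\mathrm{diam}(\Omega)$ we estimate $\rho_0^N/d(x)^N\ge \rho_0^N/\mathrm{diam}(\Omega)^N$, so the right-hand side is still bounded below by a constant multiple of $d(x)^{-2s}$. Taking the minimum of the two constants produces the desired $\alpha>0$.

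The only delicate point is coordinating the radius $\rho$ with the density condition: one must have $\rho\le\rho_0$ for \eqref{density} to apply, while keeping the ratio $\rho^N/(d(x)+\rho)^{N+2s}$ comparable to $d(x)^{-2s}$. This is resolved by the split above, absorbing the large-distance regime into a constant through the boundedness of $\Omega$.
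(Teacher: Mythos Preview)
Your proof is correct and follows essentially the same approach as the paper: both use $\Omega^c\subset B_{d(x)}(x)^c$ for the upper bound and restrict to $B_\rho(\bar x)\cap\Omega^c$ combined with the density condition \eqref{density} and the triangle inequality $|x-y|\le 2d(x)$ for the lower bound. Your treatment is in fact slightly more complete, since the paper only writes out the case $d(x)\le\rho_0$ and leaves the complementary regime $d(x)>\rho_0$ implicit, whereas you spell it out using the boundedness of $\Omega$.
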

\begin{proof}
To start with,  notice that, we have $\Omega^c\subset
B_{d(x)}(x)^c$ for all $x\in\Omega$. Then,
\be\label{easypart}
h(x)\le \int_{B_{d(x)}(x)^c}\frac{1}{|x-y|^{N+2s}}dy=\int_{|z|\ge d(x)}\frac{1}{|z|^{N+2s}}dz=\frac{\omega_N}{2s}\frac1{d(x)^{2s}}
\ee
whence the upper bound in  \eqref{killing}. 

To prove the lower bound  let us take
$x\in\Omega$ such that $d(x)\le \rho_0$. We then have  % It results that
\[
h(x)=\int_{\Omega^c}\frac{1}{|x-y|^{N+2s}}d y  \ge
\int_{\Omega^c\cap B_{d(x)}(\bar x)}\frac{1}{|x-y|^{N+2s}}d y 
\]
where $\bar x\in\partial\Omega$ is such that $d(x)=|x-\bar x|$. Using that if $y\in B_{d(x)}(\bar x)$ then $|x-y|\le |x|+|y|\le 2d(x)$ and taking advantage of \eqref{density} (recall that $d(x)\le \rho_0$), the inequality above becomes
\[
h(x)\ge\frac{1}{(2d(x))^{N+2s}}|\Omega^c\cap B_{d(x)}(\bar x)|\ge C\frac{1}{d(x)^{2s}}.\qedhere
\]
\end{proof}

\section{Existence and uniqueness}

The existence of viscosity solutions follows by applying the
classical Perron method. We give here full details of this
construction in the parabolic case, hence proving Theorem
\ref{existence}. The proof of Theorem \ref{teide} follows the same
lines, being actually simpler. We comment on it at the end of the
section.

As a first step toward the implementation of the Perron method,
we start by providing  a suitable barrier for the elliptic problem. %, then we prove a comparison result for sub and super solutions that are ordered at the boundary and, finally, we implement to our setting the Perron's method.

\begin{lemma}[Barriers]\label{barriercone}
Let us assume that the set valued function $x\to\Omega(x)$ satisfies \eqref{ostationary}, \eqref{Sigmadef}. Then there exists a positive $\bar \eta=\bar \eta(N,s,\zeta,\alpha)$ such that, for any $\eta\in(0,\bar \eta]$,
the function $u_{\eta}(x)=d(x)^{\eta}$ solves the inequality
\be\label{sbomba}
\frac{\alpha}{d(x)^{2s}}u_{\eta}(x)+\mathcal{L}_s(\Omega(x),u_{\eta}
(x) )\ge \frac{\alpha}{2} d(x)^{\eta-2s} \ \ \ \mbox{in} \ \ \ \Omega
\ee
in the viscosity sense.
\end{lemma}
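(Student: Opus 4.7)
The plan is to verify the inequality in the viscosity sense at an arbitrary interior point $x_0\in\Omega$. By Lemma \ref{def2}, for a smooth test function $\varphi$ touching $u_\eta$ from below at $x_0$, I replace $\varphi$ by $\varphi_r$ (equal to $\varphi$ in $B_r(x_0)$ and to $u_\eta$ outside) with $r=\zeta d(x_0)$. Since $h(x_0)u_\eta(x_0)\ge \alpha d(x_0)^{\eta-2s}$ by \eqref{alpha}, the goal reduces to showing
\[
\mathcal L_s\bigl(\Omega(x_0),\varphi_r(x_0)\bigr)\ \ge\ -\tfrac{\alpha}{2}\, d(x_0)^{\eta-2s}
\]
for $\eta$ small enough (depending on $\alpha$). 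Using \eqref{ostationary}, I then split the nonlocal integral into a near-field contribution on $\Sigma\cap B_r(0)$ and a far-field contribution on $\tilde\Omega(x_0)\setminus B_r(0)$.

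In the near field, the symmetry $\Sigma=-\Sigma$ recasts the integrand as the second difference $2\varphi(x_0)-\varphi(x_0-z)-\varphi(x_0+z)$, which by $\varphi\le u_\eta$ (with equality at $x_0$) is bounded below by $2d(x_0)^\eta-d(x_0-z)^\eta-d(x_0+z)^\eta$. Combining the 1-Lipschitz property of $d$ with the concavity inequality $(d(x_0)+|z|)^\eta\le d(x_0)^\eta+\eta d(x_0)^{\eta-1}|z|$ gives the linear lower bound $-2\eta d(x_0)^{\eta-1}|z|$. Integrating against $|z|^{-N-2s}$ over $B_{\zeta d(x_0)}$ yields $I_{\mathrm{near}}\ge -C\eta d(x_0)^{\eta-2s}$ when $s<1/2$. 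For $s\ge 1/2$ the linear bound fails to be integrable at the origin, so I instead fall back on the smoothness of $\varphi$, using $|2\varphi(x_0)-\varphi(x_0\pm z)|\le \|D^2\varphi\|_\infty |z|^2$ together with a rescaling argument to recover the same asymptotic order.

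In the far field, I restrict attention to $\{y:\ d(y)>d(x_0)\}$, where the integrand is negative. Here the representation $d(y)^\eta-d(x_0)^\eta=\eta\int_{d(x_0)}^{d(y)}t^{\eta-1}\,dt$ together with $t^{\eta-1}$ being decreasing on $[d(x_0),d(y)]$ gives the sharp bound $d(y)^\eta-d(x_0)^\eta\le \eta d(x_0)^{\eta-1}|x_0-y|$ that recovers an $\eta$-factor, while the trivial bound $\le\mathrm{diam}^\eta$ controls the region $|x_0-y|$ large. For $s>1/2$ the weighted integral $\int_r^\infty \rho^{-2s}d\rho$ converges directly, yielding $I_{\mathrm{far}}\ge -C\eta d(x_0)^{\eta-2s}$. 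For $s\le 1/2$ the two bounds have to be balanced at an optimal cut-off $K\sim d(x_0)^{1-\eta}/\eta$, producing a bound of order $-C\eta^{2s}d(x_0)^{\eta-2s}$ (after using $\eta<2s$).

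Combining everything yields
\[
h(x_0)u_\eta(x_0)+\mathcal L_s\bigl(\Omega(x_0),\varphi_r(x_0)\bigr)\ge \bigl[\alpha-C(N,s,\zeta)(\eta+\eta^{2s\wedge 1})\bigr]\,d(x_0)^{\eta-2s},
\]
and it suffices to pick $\bar\eta=\bar\eta(N,s,\zeta,\alpha)$ so that the bracket is at least $\alpha/2$ for every $\eta\in(0,\bar\eta]$. The main delicacy is the far-field analysis for $s\le 1/2$: the crude subadditivity bound $d(y)^\eta-d(x_0)^\eta\le|x_0-y|^\eta$ does not produce any factor of $\eta$, so the refined Taylor-type estimate based on the monotonicity of $t^{\eta-1}$ and the optimal balancing of the cut-off are both essential to close the argument.
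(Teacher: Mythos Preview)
Your argument has two genuine gaps that, taken together, cover every $s\in(0,1)$.

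\emph{Near field, $s\ge 1/2$.} Falling back on $|2\varphi(x_0)-\varphi(x_0+z)-\varphi(x_0-z)|\le\|D^2\varphi\|_\infty|z|^2$ cannot close the estimate: the constant $\|D^2\varphi\|_\infty$ depends on the particular test function and carries no factor of $\eta$, so the resulting contribution $C\|D^2\varphi\|_\infty(\zeta d(x_0))^{2-2s}$ is neither uniform in $\varphi$ nor small as $\eta\to 0$. No rescaling of the domain repairs this, since the test function is arbitrary and does not scale with $d(x_0)$ in any controlled way.

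\emph{Far field, $s\le 1/2$.} Balancing your linear bound against the trivial bound $d(y)^\eta-d(x_0)^\eta\le\mathrm{diam}^\eta$ at the cut-off $K\sim d(x_0)^{1-\eta}/\eta$ produces a contribution of order $\eta^{2s}d(x_0)^{2s\eta-2s}$, not $\eta^{2s}d(x_0)^{\eta-2s}$ as you claim. The ratio of the two is $d(x_0)^{(2s-1)\eta}$, which blows up as $d(x_0)\to 0$ when $s<1/2$, so the bound cannot be absorbed into $\tfrac{\alpha}{2}d(x_0)^{\eta-2s}$ uniformly in $x_0$.

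Both problems vanish with one observation you have not used: since $u_\eta=d^\eta$ is touched from below by a smooth function at $x_0$, the distance $d$ is differentiable there and admits a unique nearest boundary point $\bar x$; hence $d(y)\le|y-\bar x|$ for all $y$, with equality at $y=x_0$. Replacing $d(\cdot)^\eta$ by the \emph{smooth} majorant $|\cdot-\bar x|^\eta$ gives, in the near field, a genuine second-order Taylor expansion with Hessian bound $C\eta\,d(x_0)^{\eta-2}$, yielding a quadratic second-difference estimate that is integrable for every $s\in(0,1)$ and carries the needed $\eta$-factor. In the far field, the same replacement followed by the scaling $z=d(x_0)w$ gives directly
\[
I_{\mathrm{far}}\ \ge\ -\,d(x_0)^{\eta-2s}\int_{|w|\ge\zeta}\frac{(1+|w|)^\eta-1}{|w|^{N+2s}}\,dw,
\]
where the integral is finite for $\eta<2s$ and tends to $0$ as $\eta\to 0$ by dominated convergence. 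This is precisely the route taken in the paper, and it avoids any case distinction in $s$.
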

\begin{remark}\label{barpar}\rm
Notice that under  assumptions \eqref{Sigmadef} and \eqref{omegax}, the function $u_{\eta}$, with $\eta<\bar \eta(N,s,\zeta_T,\alpha)$ also satisfies in the viscosity sense
\[
\partial_tu_{\eta}(x)+ \frac{\alpha}{d(x)^{2s}}u_{\eta}(x)+\mathcal{L}_s(\Omega(t,x),u_{\eta}
(x) )\ge \frac{\alpha}{2} d(x)^{\eta-2s}  \ \ \ \mbox{in} \ \ \ (0,T)\times\Omega,
\]
since for all $t\in(0,\infty)$ the set valued function $x\to\Omega_t(x)=\Omega(t,x)$ satisfies \eqref{ostationary} with $\zeta_T$. If we moreover assume \eqref{threeprime}, the barrier is uniform in time\EEE.
\end{remark}
\begin{proof}[Proof of Lemma \ref{barriercone}]
Fix  $x\in\Omega$ and assume that there exists $\varphi\in
C^2(\Omega)$  such that $u_{\eta}-\varphi$ has a minimum in $\Omega$
at $x$ and that $u_{\eta}(x)=\varphi(x)$. We have to check (see
Lemma  \ref{def2}) that for any $B_r(x)\subset \Omega$
\be\label{11:32}
\frac{\alpha}{d(x)^{2s}}u_{\eta}(x)+\mathcal{L}_s(\Omega(x),\varphi_r(x))\ge \frac{\alpha}{2} d(x)^{\eta-2s}.
\ee
Since  $\varphi$ touches $u_{\eta}$ from below  at $x$, we
deduce \cite[Prop. 2.14]{BC} that there exists a unique
$\bar x\in\partial \Omega$ such that $d(x)=|x-\bar x|$. To simplify
notation, from now on we use a system of coordinates that is
centered at $\bar x$, so that $d(x)=|x|$. Notice that %, thanks to the properties of $\varphi$, it results that
\[
\varphi(x)-\varphi(x+z)\ge u_{\eta}(x)-u_{\eta}(x+z)\ge (|x|^{\eta}-|x+z|^{\eta}) \ \ \ \mbox{for all} \ \ \ z\in\Omegat(x),
\]
the last inequality following from the fact that $d(x+z)\le
|x+z|$. We then have that  %Then it results that
\[
\mathcal{L}_s(\Omega(x),\varphi_r(x))\ge  \int_{\Omegat(x)}\frac{|x|^{\eta}-|x+z|^{\eta}}{|z|^{N+2s}}dz
\]
\[
=\left(\int_{ \{|z|\le \zeta
    |x|\}\cap\Sigma }\frac{|x|^{\eta}-|x+z|^{\eta}}{|z|^{N+2s}}dz+\int_{\{|z|>
    \zeta
    |x|\}\cap\Omegat(x)}\frac{|x|^{\eta}-|x+z|^{\eta}}{|z|^{N+2s}}dz\right)=:
I_1+I_2,
\]
 where we have used  that $\Omegat(x)\cap B_{\zeta
    |x|}=\Sigma\cap B_{\zeta|x|}$ (see assumption \eqref{ostationary})\EEE.  Thanks to Taylor expansion, we get that
\begin{align*}
|x+z|^{\eta}&=|x|^{\eta}+\eta |x|^{\eta-2}x\cdot z
+\frac12\left[\eta(\eta-2)|\xi|^{\eta-4}|\xi \cdot z|^2+\eta
  |\xi|^{\eta-2}|z|^2 \right]\\
&
\le |x|^{\eta}+\eta |x|^{\eta-2}x\cdot z+2^{\eta-3}\eta |x|^{\eta-2}|z|^2  \ \ \ \mbox{ for } \ \ |z|\le \zeta |x|
\end{align*}
with $\xi= x+t z$ for some $t\in(0,1)$ and the 
inequality follows   from neglecting a negative term and
from  the fact that $|\xi|\ge (1-\zeta)|x|\ge \frac12 |x|$. This implies that
\begin{align}\label{16:42bis}
I_1&\ge-\int_{\{|z|\le \zeta |x|\}\cap\Sigma}\left(\eta |x|^{\eta-2}x\cdot z+\eta
  2^{\eta-3} 
  |x|^{\eta-2}|z|^2\right)\frac{1}{|z|^{N+2s}}dz\\
&
=- \eta 2^{\eta-3}  |x|^{\eta-2}\int_{\{|z|\le \zeta |x|\}\cap\Sigma}|z|^{2-N-2s}dz\ge- \eta C|x|^{\eta-2s}. \nonumber
\end{align}
 where, in the second line, we have used that the set $\{|z|\le \zeta |x|\}\cap\Sigma$ is radially symmetric and that the first order term of the expansion vanishes in the principal value sense\EEE.
On the other hand, one has that 
\be\label{16:43bis}
I_2\ge|x|^{\eta-2s}\int_{\{|y|\ge \zeta\}}\frac{1-(1+|y|)^{\eta}}{|y|^{N+2s}}dy.
\ee
%Let us focus on the integral in the right-hand side above:
%\begin{align*}
%&\int_{\{|y|> \zeta\}}\frac{(1+|y|)^{\eta}-1}{|y|^{N+ 2
%  s}}dy=\omega_N\int_{\zeta}^{\infty}\frac{(1+t)^{\eta}-1}{t^{ 2
%  s-1}}dt\\
%&\quad
%=
%  \omega_N\sum_{i=0}^{\infty}\int_{{\zeta}^{1-i}}^{{\zeta}^{-i}}\frac{(1+t)^{\eta}-1}{t^{2s+1}}dt\le \omega_N\sum_{i=0}^{\infty}\frac{(1+{\zeta}^{-i})^{\eta}-1}{{\zeta}^{(1-i)(2s+1)}}({\zeta}^{-i}-{\zeta}^{1-i})\\
%&\quad
%\le C\sum_{i=0}^{\infty}\left( \zeta^{i2(s-\eta)}-\zeta^{i2s}\right)\le C\left(\frac{1}{1-\zeta^{2(s-\eta)}}-\frac{1}{1-\zeta^{2s}}\right).
%\end{align*}
Combining this last inequality with \eqref{16:42bis} and \eqref{16:43bis}, we obtain that
\[
\mathcal{L}_s(\Omega(x),\varphi_r(x))\ge- g(\eta)d(x)^{\eta-2s},
\]
where $$g(\eta)=C\left(\eta+\int_{\{|y|\ge \zeta\}}\frac{1-(1+|y|)^{\eta}}{|y|^{N+2s}}dy\right).$$
Notice that, thanks to the  Lebesgue Dominated Convergence
Theorem,  the integral in the brackets above  goes to zero as $\eta\to0$. It follows  that
\[
\frac{\alpha}{
  d(x)^{2s}}u_{\eta}(x)+\mathcal{L}_s(\Omega(x),\phi_r(x))-f(x)\ge   (\alpha-g(\eta))d(x)^{\eta-2s}.
\]
At this point it is enough to chose $\eta\le\bar \eta$  satisfying
$
\alpha-g(\bar \eta)= {\alpha}/2$ in order to conclude the
proof. 
\end{proof}

 Let us now provide a comparison principle for equation
\eqref{parabolic}.  This relies on  Lemma \ref{differenza},
 which is in turn based  on the regularization of sub/super-solutions through sup/inf convolution.

\begin{lemma}[Comparison] \label{timecomp}
Assume \eqref{Sigmadef}, \eqref{contpar}-\eqref{alphap}, that $T\in (0,\infty)$, that $u(t,x)$ and $v(t,x)$ are sub- and
supersolutions to \eqref{parabolic}, respectively,  that they
are ordered on the boundary, namely $u\le v$ on $
(0,T)\times\partial\Omega  $, and that $u(0,\cdot)\le v (0,\cdot)$ on
$\Omega$.  Then,
\begin{equation}
u\le v \ \ \mbox{in} \ \  (0,T)\times\Omega  .
\end{equation}
\end{lemma}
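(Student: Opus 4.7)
The plan is a time-penalization argument that leverages Lemma \ref{differenza} to reduce the question to a scalar viscosity inequality. Since $u$ and $v$ are a sub- and a supersolution of the same equation, I would apply Lemma \ref{differenza} with $h_1=h_2=h$, $\Omega_1=\Omega_2=\Omega$, $f_1=f_2=f$; all correction terms in the resulting $\tilde f$ then vanish, so $w:=u-v\in\USC_b((0,T)\times\Omega)$ is a viscosity subsolution of
\[
\partial_t w + h(t,x)\,w + \mathcal{L}_s(\Omega(t,x),w)\le 0.
\]

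Next I argue by contradiction, assuming $\sup_{(0,T)\times\Omega} w>0$. I introduce the penalization $w_\tau(t,x):=w(t,x)-\tau/(T-t)$ with $\tau>0$ small enough that $\sup w_\tau>0$. Interpreting $u\le v$ on the parabolic boundary in the customary semicontinuous sense $u^*\le v_*$ gives $w^*\le 0$ on $\{0\}\times\overline\Omega\,\cup\,(0,T)\times\partial\Omega$. Since $w_\tau^*\to-\infty$ as $t\to T$, the restriction of $w_\tau^*$ to $[0,T-\delta]\times\overline\Omega$ for suitable $\delta>0$ is a USC function on a compactum and attains its (positive) supremum at some $(t_0,x_0)$, which must lie in $(0,T)\times\Omega$ because $w_\tau^*<0$ on the parabolic boundary.

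I then take as test function the $C^2$ map constant in $x$,
\[
\phi(t):=w(t_0,x_0)+\tau/(T-t)-\tau/(T-t_0),
\]
which touches $w$ from above at $(t_0,x_0)$ by construction. Invoking the parabolic analogue of Lemma \ref{def2}, with $\phi_r=\phi$ on $B_r(x_0)$ and $\phi_r=w$ outside, the subsolution inequality reads
\[
\tau/(T-t_0)^2 + h(t_0,x_0)\,w(t_0,x_0) + \mathcal{L}_s(\Omega(t_0,x_0),\phi_r(t_0,x_0))\le 0.
\]
The nonlocal term is nonnegative: on $B_r(x_0)$ the integrand vanishes because $\phi_r\equiv w(t_0,x_0)$ there, while on the complement it equals $(w(t_0,x_0)-w(t_0,y))/|x_0-y|^{N+2s}\ge 0$, since at the fixed time $t_0$ the time penalization is constant and hence $w(t_0,\cdot)\le w(t_0,x_0)$ pointwise. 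Combined with $\partial_t\phi(t_0,x_0)=\tau/(T-t_0)^2>0$ and $h(t_0,x_0)\,w(t_0,x_0)\ge\alpha d(x_0)^{-2s}w(t_0,x_0)>0$ (by \eqref{alphap} and $w(t_0,x_0)>0$), the left-hand side is strictly positive, a contradiction. Hence $w\le 0$, i.e.\ $u\le v$.

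The delicate point I anticipate is the interior attainment of the maximum of $w_\tau^*$: the maximizer must satisfy $d(x_0)>0$, which rests on the Dirichlet condition being understood in the semicontinuous sense $u^*\le 0\le v_*$ on $(0,T)\times\partial\Omega$. Should this interpretation not be immediately at hand, I would enforce interior attainment through the barrier of Lemma \ref{barriercone} (see Remark \ref{barpar}): comparing $u$ and $v$ individually with $\pm C d(x)^\eta$ produces $|u|,|v|\le C d(x)^\eta$ in a tubular neighborhood of $\partial\Omega$, so that $w^*\to 0$ uniformly as $d(x)\to 0$ and the maximization can safely be restricted to a compact subdomain of $(0,T)\times\Omega$, where the contradiction above applies verbatim.
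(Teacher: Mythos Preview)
Your proof is correct and follows essentially the same strategy as the paper: reduce via Lemma \ref{differenza} to the homogeneous inequality $\partial_t w + hw + \mathcal{L}_s(\Omega,w)\le 0$ for $w=u-v$, apply a $\tau/(T-t)$ penalization to force the positive supremum to be attained at an interior point, and obtain a contradiction from the strict positivity of $h$ in \eqref{alphap}. The only cosmetic difference is the order of operations---the paper penalizes $u$ \emph{before} invoking Lemma \ref{differenza} and then tests with the constant $M=\sup w$, whereas you penalize $w$ afterwards and test with a time-only function via Lemma \ref{def2}---but the two arguments are equivalent (and your ordering has the minor advantage that Lemma \ref{differenza} is applied to genuinely bounded functions).
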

\begin{proof}
Given $\delta>0$, let  us introduce the function $u_{\delta}(t,x)=u(t,x)-\frac{\delta}{T-t}$ and notice that it is a  viscosity subsolution to \eqref{parabolic}, namely,
\[
\partial_t u_{\delta}(t,x) +h(t,x)u_{\delta} (t,x) +\mathcal{L}_{s}(\Omega(t,x),
u_{\delta} (t,x)  )- f(t,x)\le -\frac{\delta}{T^2}<0.
\]
We firstly show  that $u_{\delta}\le v$, for any $\delta>0$,
and then conclude the proof by  taking the limit as $\delta$
goes to $0$. Using Lemma \ref{differenza}, we deduce that $w=u_{\delta}-v$ solves in the viscosity sense
\[
\partial_t w(t,x) +h(t,x)w(t,x)+\mathcal{L}_{s}(\Omega(t,x),
w (t,x)  )\le0 \ \ \  \mbox{in} \ \Omega.
\]
Let us assume by contradiction that $\sup_{(0,T)\times\Omega}w=
M >0$. Due to the ordering assumption on the parabolic boundary $
(0,T)\times\partial\Omega  $ and on the initial conditions, and the
behavior of $u_{\delta}$ as $t\to T^-$,  $M$ is
attained inside at $(\bar t, \bar x)\in(0,T)\times\Omega$. This 
implies  that the constant function $M$ touches from above $w$ at
 the  point $(\bar t, \bar x)$, and then it is an admissible test function for $w$ to be a subsolution. It follows that
\[
\frac{\alpha}{d^{2s}(\bar x)}M\le h(\bar t, \bar x)M\le 0,
\]
that is clearly a contradiction. Then $u_{\delta}-v=w\le 0$ for any
$\delta>0$ and  the assertion follows. \EEE
\end{proof}

\begin{cor}[Elliptic comparison] \label{ellcomp}
Assume \eqref{alpha}-\eqref{Sigmadef} that $u(x)$ and $v(x)$ are sub- and
supersolutions to \eqref{10:17}, respectively, and that  $u\le v$ on $\partial\Omega$. Then,
\begin{equation}
u\le v\ \ \mbox{in} \ \  \Omega  .
\end{equation}
\end{cor}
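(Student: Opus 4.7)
The plan is to follow the strategy of Lemma \ref{timecomp}, which simplifies substantially in the stationary setting because there is no time direction requiring an endpoint barrier. First, I would invoke Corollary \ref{ellipticdif} on the difference $w = u - v$: since both $u$ and $v$ are taken to satisfy the equation with the same forcing $f$, the right-hand side $f_1 - f_2$ vanishes, so that $w \in \USC_b(\Omega)$ is a viscosity subsolution of
\begin{equation*}
h(x) w(x) + \mathcal{L}_s(\Omega(x), w(x)) \le 0 \quad \text{in } \Omega.
\end{equation*}

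Next, I argue by contradiction, assuming $M := \sup_{\Omega} w > 0$. The boundary ordering $u \le v$ on $\partial\Omega$, read through upper/lower semicontinuous envelopes, implies that the upper semicontinuous envelope $w^*$ satisfies $w^* \le 0$ on $\partial\Omega$. Since $w^*$ is upper semicontinuous on the compact set $\overline{\Omega}$, it attains its maximum, and the positivity of $M$ forces this maximum to be realized at an interior point $\bar x \in \Omega$; upper semicontinuity of $w$ then yields $w(\bar x) = M$. Consequently, the constant map $\varphi \equiv M$ is a legitimate $C^2(\Omega)$ test function touching $w$ from above at $\bar x$. Because the nonlocal operator annihilates constants, $\mathcal{L}_s(\Omega(\bar x), \varphi(\bar x)) = 0$, and the viscosity subsolution inequality collapses to
\begin{equation*}
h(\bar x) M \le 0,
\end{equation*}
which contradicts $h(\bar x) \ge \alpha/d(\bar x)^{2s} > 0$ together with $M > 0$.

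The main obstacle, relatively minor in this setting, is the careful interpretation of the boundary inequality for merely semicontinuous sub- and supersolutions, together with the extraction of an interior maximizer. Should the boundary condition require additional care, one can alternatively perturb by the barrier $\delta d(x)^\eta$ from Lemma \ref{barriercone}, forming $w_\delta = w - \delta d(x)^\eta$. By Corollary \ref{ellipticdif} this is a strict subsolution with right-hand side bounded above by $-\delta(\alpha/2) d(x)^{\eta - 2s}$, it still satisfies $w_\delta \le 0$ on $\partial\Omega$, and any putative interior maximum yields the same contradiction as above; letting $\delta \to 0$ then recovers $\sup_\Omega w \le 0$.
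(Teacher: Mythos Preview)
Your proposal is correct and follows exactly the route the paper intends: the paper omits the proof, pointing only to Corollary \ref{ellipticdif}, and the argument you give---pass to $w=u-v$ via Corollary \ref{ellipticdif}, then test with the constant $M$ at an interior maximum to contradict $h>0$---is precisely the elliptic analogue of the proof of Lemma \ref{timecomp}. Your optional barrier perturbation is not needed here but does no harm.
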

 The proof of this corollary can be easily deduced from that of
Corollary \ref{ellipticdif} and we omit the details for the sake of brevity.

We are now ready to present a first existence result, which
relies on the possibility of finding suitable barriers for  the parabolic problem. We will later
check that such barriers can be easily obtained from Lemma \ref{barriercone}.

\begin{teo}[Existence, given barriers] \label{perronpar}Assume \eqref{Sigmadef}, \eqref{contpar}-\eqref{alphap}.
Let  $T \in (0,\infty) $  and    $\underline
l(t,x)$ and $\overline l(t,x)$ be  sub- and supersolution to
\eqref{parabolic}, respectively, with  $\underline
l=\overline l=0$ on $(0,T) \times  \partial
\Omega$. Then, for any $u_0\in C(\Omega)$ such that $\underline
l(0,x)\le u_0(x) \le \overline l(0,x)$ for all $x\in\Omega$, problem
\eqref{parabolic}  admits a unique viscosity solution.
\end{teo}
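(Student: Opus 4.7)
The plan is to implement Perron's method in the form adapted to nonlocal operators, relying on three ingredients already at our disposal: the comparison principle of Lemma~\ref{timecomp}, the stability of viscosity subsolutions under $\Gamma$-convergence from Lemma~\ref{stability}, and the cut-and-paste characterization of subsolutions in Lemma~\ref{def2}. Uniqueness is immediate: given two solutions, Lemma~\ref{timecomp} applied in both directions forces coincidence, so all the work is in existence.

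For existence, I would introduce the Perron class
\[
\mathcal S=\{\,w\in \USC((0,T)\times\Omega)\cap L^{\infty}((0,T)\times\Omega):\ w\text{ is a viscosity subsolution of \eqref{parabolic}},\ w\le\overline l,\ w(0,\cdot)\le u_0\,\}
\]
and define the candidate $u(t,x):=\sup_{w\in\mathcal S}w(t,x)$. Since $\underline l\in\mathcal S$ the class is nonempty and $\underline l\le u\le\overline l$. Finite maxima of elements of $\mathcal S$ are again subsolutions (in the cut-and-paste test of Lemma~\ref{def2}, enlarging the ``outside'' values of $\varphi_r$ only decreases the nonlocal integrand), and $u^*$ is the $\Gamma$-limit of a suitable increasing sequence of such finite maxima. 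Applying Lemma~\ref{stability} with $h_n\equiv h$, $f_n\equiv f$, $\Omega_n\equiv\Omega$ then yields that $u^*$ is itself a subsolution; since $u^*\le\overline l$ and $u^*(0,\cdot)\le u_0$ by upper-semicontinuity, $u^*\in\mathcal S$, and hence $u^*=u$ by maximality.

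The main obstacle is to show that the lower-semicontinuous envelope $u_*$ is a supersolution. Suppose the contrary at some $(t_0,x_0)\in(0,T)\times\Omega$; then there is $\varphi\in C^2$ with $u_*-\varphi$ having a strict local minimum equal to $0$ at $(t_0,x_0)$ and
\[
\partial_t\varphi(t_0,x_0)+h(t_0,x_0)\varphi(t_0,x_0)+\mathcal L_s(\Omega(t_0,x_0),\varphi(t_0,x_0))<f(t_0,x_0).
\]
Since $\overline l$ is a supersolution this forces $u_*(t_0,x_0)<\overline l(t_0,x_0)$. Using continuity of $h$ and $f$, the parabolic analogue of \eqref{contpar}, and Lemma~\ref{continuity}, the strict inequality extends to a small space-time neighborhood, and I would perform the classical bump construction in the form dictated by Lemma~\ref{def2}: for small $r,\delta>0$, set $\tilde u=\max(u^*,\varphi+\delta)$ in $B_r(t_0,x_0)$ and $\tilde u=u^*$ elsewhere. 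The cut-and-paste test makes it straightforward to check that $\tilde u$ is still a subsolution, and continuity combined with $u_*(t_0,x_0)<\overline l(t_0,x_0)$ ensures $\tilde u\le\overline l$; by definition of the lsc envelope there exist $(t_n,x_n)\to(t_0,x_0)$ along which $u(t_n,x_n)\to u_*(t_0,x_0)$, along which $\tilde u$ strictly exceeds $u$, contradicting maximality. The subtlety specific to the nonlocal setting is precisely the compatibility between the bump and the cut-and-paste test, which is what makes Lemma~\ref{def2} indispensable here.

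Combining the two steps, Lemma~\ref{timecomp} applied to the subsolution $u^*$ and the supersolution $u_*$ gives $u^*\le u_*$, so $u^*=u_*=u$ is continuous on $(0,T)\times\Omega$. The lateral boundary condition $u=0$ and the bound $\underline l(0,\cdot)\le u(0,\cdot)\le\overline l(0,\cdot)$ follow from the squeeze $\underline l\le u\le\overline l$; to pin down $u(0,\cdot)=u_0$ continuously, one would further construct, for each $\eps>0$, auxiliary local sub- and supersolutions of the form $u_0(x)\pm\eps\pm C_\eps t$, whose admissibility as sub/supersolutions relies on the parabolic barrier identified in Remark~\ref{barpar} via Lemma~\ref{barriercone}.
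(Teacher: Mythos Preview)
Your overall architecture is exactly that of the paper: Perron class, $u^*$ is a subsolution via Lemma~\ref{stability}, $u_*$ is a supersolution via the bump construction, and Lemma~\ref{timecomp} closes the loop. That part is fine.

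The genuine gap is the attainment of the initial datum. You propose to use ``auxiliary local sub- and supersolutions of the form $u_0(x)\pm\eps\pm C_\eps t$, whose admissibility \dots\ relies on the parabolic barrier identified in Remark~\ref{barpar} via Lemma~\ref{barriercone}.'' Two problems. First, Lemma~\ref{barriercone} and Remark~\ref{barpar} concern the function $d(x)^{\eta}$ as a barrier at the \emph{lateral} boundary; they say nothing about the initial layer and cannot justify the claimed admissibility. Second, and more seriously, for a merely continuous $u_0$ the function $u_0(x)-\eps-C_\eps t$ is \emph{not} in general a viscosity subsolution of the equation: when a smooth $\varphi$ touches it from above at $(t_0,x_0)$ you must verify $-C_\eps+h(t_0,x_0)\varphi(t_0,x_0)+\mathcal L_s(\Omega(t_0,x_0),\varphi(t_0,x_0))\le f(t_0,x_0)$, but there is no a~priori upper bound on $\mathcal L_s(\Omega(t_0,x_0),\varphi(t_0,x_0))$ uniform over admissible $\varphi$ (the singular kernel forces this to depend on $\|D^2\varphi\|$, which is unconstrained). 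Hence no finite $C_\eps$ works. The paper circumvents this by building, for each interior point $\bar x$, an explicit \emph{smooth} barrier
\[
\tilde w(t,x)=a\,\eta\!\left(\frac{\bar x-x}{\delta_\eps}\right)-b-K\delta_\eps^{-2s}t,\qquad \eta\in C_c^\infty(B_1(0)),\ \eta(0)=1,
\]
with $a,b$ chosen so that $\tilde w(0,\cdot)\le u_0$ and $\tilde w(0,\bar x)=u_0(\bar x)-\eps$. Because $\tilde w$ is $C^2$ in space, $\mathcal L_s$ can be computed directly and scales like $\delta_\eps^{-2s}$, so a sufficiently large $K$ makes $\tilde w$ a subsolution; membership in the Perron class then forces $u_*(0,\bar x)\ge u_0(\bar x)-2\eps$. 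You need exactly this kind of \emph{smooth, localized} initial barrier, not the raw $u_0$, to close the argument.
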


\begin{proof}
We aim at applying  Perron's method.  Let us set
\begin{align*}
 A&=\Big\{w\in \USC(\Omega\times(0,T)) \ : \  \underline l(t,
    x)\le w(t,x)\le \overline l(t,  x) \ \mbox{for $(t,
     x) \in (0,T) \times \partial \Omega$, } \\
&\hspace{25mm}  w \ \mbox{is a  subsolution to \eqref{parabolic}, and } w(x,0)\le u_0(x) \Big\}.
\end{align*}
Since $\overline l \in A\not = \emptyset$,  we can set
\[
u(t,x)=\sup_{w\in A} w(t,x).
\] 
By definition, it follows \EEE that for any $(\bar t, \bar
x)\in(0,T)\times\Omega$ there exists a sequence $\{v_n\}\subset A$
that $\Gamma-$converges to the uppersemicontinuous envelop $u^*$ at $(\bar t, \bar x)$. We \EEE can
use Lemma \ref{stability} to show that $u^*$ is a subsolution to the equation in 
\eqref{parabolic}.  Moreover $u^*(\cdot,0)\le u_0(\cdot)$ in $\Omega$
and $u^*(t,\cdot)\le 0$ on $\partial\Omega$ for any $t\in(0,T)$. In
fact, \EEE assume by contradiction that there exists $\bar x \in \Omega$ such that $u^*(x,0)- u_0(x)=\xi>0$. This would mean that there exist $\{x_n\}\subset\Omega$, $\{t_n\}\subset [0,T)$ and $\{w_n\}\subset A$ such that
\[
x_n\to \bar x, \ \ \ t_n\to 0 \ \ \ \mbox{and} \ \ \ w_n(x_n,t_n)\to u_0(\bar x)+\xi,
\]
that is in contradiction with the definition of $u$. Similarly we
check that $u^*(t,\cdot)\le 0$ on $\partial\Omega$. This implies that $u^*\in A$ and, by definition of $u$, we get that $u=u^*$

Now we claim that the lower-semicontinuous envelope \EEE $u_*$ is a supersolution to \eqref{parabolic} and that $u_*(x,0)\ge u_0(x)$ for all $x\in\Omega$. Once the claim is proved, we can apply
the comparison principle of Lemma \ref{timecomp} to the subsolution
$u$ and the \EEE  supersolution $u_*$  to infer that 
\[
 u\le u_* .
\]
This implies that $u_*=u^*=u$ is a viscosity solution to
\eqref{parabolic} that satisfies the boundary and initial
conditions \EEE in the classical sense.\\

Let us hence prove that $u_*$ is a supersolution with
$u_*(x,0)\geq u_0(x)$. \EEE
%Let us show at first that $u_*$ is a supersolution to
%\eqref{parabolic}. 
By contradiction, we assume there exists $\phi\in  C^2( (0,T)\times\Omega  )$ such that $u_*-\phi$ has a strict global minimum at $(t_0,x_0)$, $u_*(t_0,x_0)=\phi(t_0,x_0)$ and 
\be\label{16:03}
\partial_t \phi(t_0,x_0)+ h(t_0,x_0)u_*(t_0,x_0)+\int_{\Omega(t_0,x_0)}  \frac{\phi(t_0,x_0)-\phi(z,t_0)}{|x_0-z|^{N+2s}}dz<f(t_0,x_0).
\ee
This means that there exists $\epsilon>0$ such that the function 
\[
F(t,x)=\partial_t \phi(t,x)
h(x)u_*(t,x)+h(t,x)\phi(t,x)+\mathcal{L}_{s} (\Omega(t,x),\phi
(t,x) \EEE)-f(t,x)
\] satisfies $F(t_0,x_0)=-\epsilon$. Since such a function is
continuous at $(t_0,x_0)$, \EEE there exists $r>0$ such that
$F(t,x)<-\frac{\epsilon}{2}$ for all $(t,x)\in
\overline{B_r(t_0,x_0)}$ where $B_{r}(t_0,x_0)=\{(|t- t_0|^2+ |x-
x_0|^2)^{\frac12}<r\}\subset (0,T)\times\Omega$. Let us define \EEE
\[
\delta_1=\inf_{x\in\Omega \setminus B_{r}(t_0,x_0)} (v- \phi)(t,x)>0, \ \ \  \ \ \ \delta_2=\frac{\epsilon}{4 \sup_{x\in {B_r(x_0)}}h(x)},
\]
and set $\delta=\min\{\delta_1,\delta_2\}$. With this choice of $r$ and $\delta$ we define
\[
V =
\begin{cases}
\max\{v ,{\phi}+\delta\} \qquad & \mbox{in } B_{r}(t_0,x_0),\\
 v  & \mbox{otherwise}\, .
\end{cases}
\]
Notice \EEE that, since $v$ is upper semincontinuous, \EEE
the set $\{v -{\phi}-\delta<0\}$ is open and nonempty. %: open since
                                %$v$ is upper semicontinuous, non
                                %empty 
(since, by definition of lower semicontinuous envelop, there exists a sequence $x_n\to x_0$ such that $v(z_n)\to u_*(x_0)=\phi(x_0)$ as $n\to\infty$). Moreover $\{v -{\phi}-\delta<0\}\subset B_{r}(t_0,x_0) $ thanks to the choice of $\delta_1$.
We want to prove that $V $ is a subsolution. Let us consider now
$\psi\in  C^2( (0,T)\times\Omega  )$ such that $V -\psi$ has a global
maximum at $(\tau_0,y_0)$ \EEE and $\psi(\tau_0,y_0)=V
(\tau_0,y_0)$. If $V (\tau_0,y_0)=v(\tau_0,y_0)$, since $V \ge v$, it
results that $v -\psi$ has \EEE a global maximum at $(\tau_0,y_0)$ and $v(\tau_0,y_0)=\psi(\tau_0,y_0)$. Using that $v $ is a subsolution we get that
\begin{align*}
&\partial_t \psi(\tau_0,y_0)+
  h(y_0)V(\tau_0,y_0)+\mathcal{L}_s(\Omega(\tau_0,y_0),\psi
  (\tau_0,y_0) \EEE)\\
&\quad
=\partial_t \psi(\tau_0,y_0)+h(y_0)
  v(\tau_0,y_0)+\mathcal{L}_s(\Omega(\tau_0,y_0),\psi(\tau_0,y_0)
  \EEE) \le f(\tau_0,y_0).
\end{align*}

Let us now focus on the case $V (\tau_0,y_0)=\phi(\tau_0,y_0)+\delta\neq v(\tau_0,y_0)$. This implies that 
\[
\partial_t \psi(\tau_0,y_0)=\partial_t \phi(\tau_0,y_0).
\]
 and that $(\tau_0,y_0)\in B_{r}(t_0,x_0)$. Then we have that
\[
\phi+\delta-\psi\le V -\psi\le 0 \ \ \ \mbox{in} \ \ B_{r}(t_0,x_0),
\]
where we have used the fact that $\phi+\delta\le V $ in
$B_{r}(t_0,x_0)$. Moreover, we readily check \EEE 
\[
\phi+\delta-\psi\le \phi+\delta-v \le 0 \ \ \ \mbox{in} \ \  (0,T)\times\Omega  \setminus B_{r}(t_0,x_0),
\]
since $v \equiv V  \le \psi$ in $ (0,T)\times\Omega  \setminus
B_{r}(t_0,x_0)$ and thanks to the definition of $\delta_1$. As
effect of \EEE the two inequalities above, we deduce that
$\phi+\delta\le \psi$ in $ (0,T)\times\Omega  $. It follows that \EEE
\begin{align*}
 &\partial_t \psi(\tau_0,y_0)+
   h(\tau_0,y_0)V(\tau_0,y_0)+\mathcal{L}_s(\Omega(\tau_0,y_0),\psi
   (\tau_0,y_0) \EEE)
  \\
&\quad
\le\partial_t \phi(\tau_0,y_0)
  +h(\tau_0,y_0)(\phi(\tau_0, \EEE y_0)+\delta)+\mathcal{L}_s(\Omega(\tau_0,y_0),\phi (\tau_0,y_0) \EEE)
  \\
&\quad \le f(\tau_0,y_0)-\frac{\epsilon}2+\frac{\epsilon}4<f(\tau_0,y_0),
\end{align*}
where the last inequality comes from the choice of $r$ and
$\delta$. This leads \EEE to a contradiction since it
implies \EEE that $V\in A$ and that $V>v\ge u$ somewhere in $
B_{r}(t_0,x_0)$. This proves that \EEE  $u_*$ is a supersolution to \eqref{parabolic}.

Finally let us \EEE  prove that $u_*(x,0)\ge u_0(x)$ for all $x\in\Omega$. Again assume by contradiction that there exists $\bar x\in\Omega$ such that
\be\label{spring}
u_*(\bar x,0)< u_0(\bar x).
\ee
Our aim is to build \EEE a barrier from \EEE below for
$u(t,x)$ in a neighborhood of $(0,\bar x)$ (hence a barrier for
$u_*$, as well), contradicting \EEE \eqref{spring}.
Thanks to the continuity of $u_0$, for any $\epsilon>0$ there exists $\delta_{\epsilon}< \frac12 d(\bar x)$ such that
\[
|u_0(\bar x)-u_0(x)|\le\epsilon \ \ \ \mbox{if} \ \ \ |\bar x -x|\le \delta_{\epsilon}.
\]
Take now a function $\eta(x)\in C_c^{\infty}(B_1(0))$ with $0\le \eta\le 1$ and $\eta(0)=1$, and define
\[
\tilde w(t,x) =a\eta\left(\frac{\bar x -x}{\delta_{\epsilon}}\right)-b-K\delta_{\epsilon}^{-2s}t,
\]
with $a=u_0(\bar x)-\epsilon+\|u_0\|_{\elle{\infty}}$,
$b=\|u_0\|_{\elle{\infty}}$, and $K>0$ to be chosen below. \EEE
Thanks to the choice of $a$ and \EEE $ b$ it is easy to check that $\tilde w(0,x)\le u_0(x)$. Moreover, recalling that supp$\left(\eta\left(\frac{\bar x -x}{\delta_{\epsilon}}\right)\right)\subset B_{\delta_{\epsilon}}(\bar x)$ and that the integral operator scales as $\delta^{-2s}$, we get that
\[
\partial_t \tilde w+h \tilde w+\mathcal{L}_s(\Omega(t,x),\tilde w
(t,x) \EEE )-f\le -K\delta_{\epsilon}^{-2s}+\delta^{-2s}C(\eta)+\|f\|_{\elle{\infty}}\le 0,
\]
 where the last inequality follows by letting \EEE  $K>
 C(\eta)+\delta_{\epsilon}^{2s}[aC(d(\bar
 x))+\|f\|_{\elle{\infty}}]$. Hence, \EEE $\tilde w\in A$ and, by
 definition of $u(t,x)$, $\tilde w(t,x)\le u(t,x)$. %Since $\tilde
                                %w(t,x)$ is continuous and $\tilde
                                %w(0,x)=u_0(\bar x)-\epsilon$, it is
                                %the suitable barrier that we were
                                %searching for. Indeed
Now, \EEE for any $\epsilon>0$, there exists $\tilde{\delta}_{\epsilon}$ (possibly smaller then $\delta_{\epsilon}$) such that
\[
u_0(\bar x)-2\epsilon\le \tilde w(t,x) \le u(t,x) \ \ \ \mbox{for} \ \ \ |\bar x -x|\le \tilde{\delta}_{\epsilon} \quad t\in [0,\tilde{\delta}_{\epsilon}).
\]
Then the same inequality holds for $u_*$, contradicting \EEE \eqref{spring}.
\end{proof}

\begin{proof}[Proof of Theorem \ref{existence}] Let us argue
  for $T < \infty$ first. Choose  $\eta\le
  \min\{\bar{\eta},\eta_1\}$  where $\eta_1$ is from
  \eqref{fconintro}-\eqref{u0con} and  $\overline \eta $ from  Lemma
  \ref{barriercone} and Remark \ref{barpar},
  and set $\overline l(t,x)= Q d(x)^{\eta}$ where $Q$ is a positive
  constant to be chosen later.   Whenever a smooth function
  $\varphi$ touches $\overline l$ from above  at $(t_0,x_0)$ we deduce that
\begin{align*}
&\partial_t\varphi_r(t_0,x_0)+h(t_0.x_0)\varphi_r(t_0,x_0)+\mathcal{L}_{s}
  (\Omega(t,x),\varphi_r(t_0.x_0))- f(t_0,x_0)\\
&\quad 
\ge\frac{\alpha}{d(x)^{2s}}\varphi_r(t_0,x_0)+\mathcal{L}_{s}
  (\Omega(t,x),\varphi_r(t_0,x_0))- |f(t_0,x_0)|\\
&\quad
\ge \left(Q \frac{\alpha}{2}-|f(t_0,x_0)|d(x_0)^{2s-\eta}\right)d(x_0)^{\eta-2s}\ge 0.
\end{align*}
The first inequality  comes from the fact that
$\partial_t\varphi_r(t_0,x_0)$ must be zero %(see Remark
% \ref{ellipticdef})
and from  assumption \eqref{alphap}  whereas the  second inequality 
follows by construction of $\overline l$ and  by  Lemma
\ref{barriercone}. The third \EEE inequality follows from
 the  assumption on $f$ (see \eqref{fconintro}) and by 
taking $Q$ large enough. This proves that $\overline{l}(t,x)$ is a
supersolution of \eqref{parabolic}. Similarly, we can show that
$\underline{l}(t,x)=-\overline{l}(t,x)$ is a subsolution. By
possibly taking an even   larger value of $Q$ if necessary, we
deduce that $\underline l(0,x)\le u_0(x) \le \overline l(0,x)$, thanks
to assumption \eqref{fconintro} on $u_0$ and to  the choice
of $\eta$. At this point, we can apply Theorem \ref{perronpar} and
conclude the proof.

The limiting case $T=\infty$ can be tackled by passing to the
limit in the   the sequence $\{u_n\}$ of solutions of problem
\eqref{parabolic} in $(0,n)\times\Omega$. Thanks  to Lemma
\ref{timecomp} we have that  %it follows that
\[
u_n(t,x)\equiv u_m(t,x) \ \ \ \mbox{in} \ \ (0,\min\{n,m\})\times\Omega.
\]
Then, for any $(t,x)\in(0,\infty)\times\Omega$, we can uniquely define
$u(t,x)=u_{[t]+1}(t,x)$, where $[t]$ is the integer part of $t$.
From the comparison principle applied on each domain
$(0,n)\times\Omega$, this uniquely defines a solution for all
times.  %. $(0,n)\times\Omega$.Clearly $u(t,x)$ is the searched solution. It is unique thanks to the comparison principle applied on each $(0,n)\times\Omega$.
\end{proof}

As mentioned above, we are not giving the details of the proof of
Theorem \ref{teide}. Indeed, the elliptic case of Theorem \ref{teide}
 follows again from  by  Perron method, by means of the barriers 
from  Lemma \ref{barriercone}. Here, one is asked to use an elliptic version of the
comparison Lemma \ref{timecomp}, which can be deduced  using Corollary \ref{ellipticdif}.

\section{The eigenvalue problem}\label{seceigenvalue}
In this section,  we focus on \EEE the eigenvalue problem
associated to the operator \EEE \eqref{10-6}. Before discussing
our specific \EEE notion of eigenvalue, we prepare some technical tools.

\begin{lemma}[Strong maximum principle]\label{strongmax}
Assume \EEE\eqref{alpha}-\eqref{Sigmadef} and let \EEE
 $u\in \LSC_b(\Omega)$ \EEE solve $$h(x) u(x)+\mathcal{L}(\Omega(x),u(x))\ge 0$$ in the viscosity sense in $\Omega$ and $u\ge0$ in $\partial\Omega$. Then, either $u\equiv0$ or $u>0$ in $\Omega$.
\end{lemma}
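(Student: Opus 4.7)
The plan is to combine a propagation argument with the elliptic comparison principle. First, I would apply Corollary \ref{ellcomp} to the supersolution $u$ and the trivial subsolution $0$, together with the boundary inequality $u\ge 0$ on $\partial\Omega$, in order to obtain $u\ge 0$ everywhere in $\Omega$.

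Next, suppose by contradiction that $u\not\equiv 0$ and yet $u(x_0)=0$ for some $x_0\in\Omega$. Since $u\ge 0=\varphi(x_0)$ with $\varphi\equiv 0$, the zero function is an admissible smooth test from below at $x_0$. Applying the equivalent definition of supersolution (Lemma \ref{def2}) with $\varphi_r\equiv 0$ on $B_r(x_0)$ and $\varphi_r=u$ outside, the supersolution inequality with $f=0$ becomes
\begin{equation*}
h(x_0)\cdot 0 + \int_{\Omega(x_0)\setminus B_r(x_0)} \frac{-u(y)}{|x_0-y|^{N+2s}}\,dy \;\ge\; 0.
\end{equation*}
As $u\ge 0$, the integrand is nonnegative, forcing $u=0$ almost everywhere on $\Omega(x_0)\setminus B_r(x_0)$. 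Letting $r\to 0$ yields $u=0$ a.e.\ on the open set $\Omega(x_0)$; since $u\in\LSC_b$ and $u\ge 0$, the a.e.\ identity upgrades to pointwise equality $u\equiv 0$ on $\Omega(x_0)$.

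The remaining task is to show that the zero set $Z=\{u=0\}$ is open, so that (with $Z$ closed by lower semicontinuity and $\Omega$ connected) $Z=\Omega$. Assumption \eqref{ostationary} together with $\Sigma=-\Sigma$ gives $\Omega(x_0)\supset x_0+(\Sigma\cap B_{\zeta d(x_0)}(0))$; hence this whole set lies in $Z$. Pick any $y=x_0+\sigma_1$ with $\sigma_1\in\Sigma\cap B_{\zeta d(x_0)/2}(0)$; since $\zeta<1/2$ we have $d(y)\ge d(x_0)/2$, and iterating the same argument at $y\in Z$ yields $Z\supset y+(\Sigma\cap B_{\zeta d(x_0)/2}(0))$. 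Unioning over such $\sigma_1$,
\begin{equation*}
Z \supset x_0 + \bigl[(\Sigma\cap B_{\zeta d(x_0)/2}(0)) + (\Sigma\cap B_{\zeta d(x_0)/2}(0))\bigr].
\end{equation*}
The density condition \eqref{Sigmadef} implies that $A:=\Sigma\cap B_{\zeta d(x_0)/2}(0)$ has positive Lebesgue measure, and $\Sigma=-\Sigma$ gives $A+A=A-A$. By Steinhaus's theorem $A-A$ contains an open neighborhood of the origin, so $Z$ contains a neighborhood of $x_0$ and is therefore open, completing the proof.

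I expect the main obstacle to be the openness step: one must carefully restrict all iterations to the scale $\zeta d(x_0)$ so that assumption \eqref{ostationary} gives the ``stationary'' identification $\Omega(y)\cap B_{\cdot}(y) = y+\Sigma\cap B_{\cdot}(0)$, and then invoke Steinhaus-type reasoning to convert a single positive-density iteration into a genuine ball. The earlier steps---comparison to get $u\ge 0$, the test-function calculation forcing $u=0$ a.e.\ on $\Omega(x_0)$, and the upgrade to pointwise via lower semicontinuity---are routine once Lemma \ref{def2} is in hand.
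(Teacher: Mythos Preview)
Your proof is correct and follows the same two-stage outline as the paper---first show that $u(x_0)=0$ forces $u\equiv0$ on $\Omega(x_0)$, then propagate---but with different technical choices at each stage. For the local step, the paper constructs a smooth bump $0\le\varphi\le u$ that is nontrivial on $\Omega(x_0)$ and tests via Definition~\ref{def1} to obtain an immediate contradiction; you instead test with $\varphi\equiv 0$ through the equivalent Lemma~\ref{def2}, so that $u$ itself appears in the integral outside $B_r(x_0)$, which is cleaner and avoids building $\varphi$. For propagation, the paper constructs an explicit finite chain $x_i\in\Omega(x_{i-1})$ linking $x_0$ to any $y_0$ (using that $\Sigma$ is open), whereas you argue that the zero set is clopen via Steinhaus applied to $A=\Sigma\cap B_{\zeta d(x_0)/2}(0)$ and conclude by connectedness of $\Omega$. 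Your Steinhaus route has the mild advantage of requiring only $|A|>0$, i.e.\ only the density bound in \eqref{Sigmadef}, rather than openness of $\Sigma$; on the other hand the paper's chain argument is slightly more hands-on and sidesteps the a.e.-to-pointwise upgrade. Both proofs implicitly rely on $\Omega$ being connected, without which the stated dichotomy can fail.
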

\begin{proof} Notice that, thanks to the comparison principle, we
  have that $u\ge0$ in $\Omega$. Let us assume that $u(x_0)=0$ at some $x_0\in\Omega$ and that, by contradiction, $u(y_0)>0$, for some $y_0\in\Omega$. 

If $y_0\in \Omega(x_0)$, since $x_0$ is a minimum for $u$, there
exists $\varphi\in C^2(\Omega)$ such that $\varphi(x_0)=u(x_0)=0$,
$\varphi(x_0)\le u(x_0)$ in $\Omega$. Moreover, since  $u\in \LSC_b(\Omega)$\EEE,  we can chose
$\varphi$ nonnegative and nontrivial in $\Omega(x_0)$. Since $\varphi$
is an admissible test function for $u$ at point $x_0$ and it follows that
\[
\int_{\Omega(x_0)}\frac{-\varphi(x_0+z)}{|z|^{N+2s}}\ge0.
\] 
This is however contradicting the fact that $\varphi\ge0$ is
nontrivial in $\Omega(x_0)$ and proves that $u(x_0)=0$
implies $u=0$ in $\Omega(x_0)$.

If $y_0\notin \Omega(x_0)$, thanks to assumption
\eqref{Sigmadef} and the fact that $\Sigma$ is open, there exists a finite set of points
$\{x_i\}_{i=0}^K\subset\Omega$ such that $x_i\in \Omega(x_{i-1})$ for
$i=1,\cdots, K$ and $y_0\in\Omega(x_K)$. Using inductively the
previous part we deduce that $u=0$ in each $\Omega(x_i)$, that is
$u(y_0)=0$, which is again a contradiction.
\end{proof}
 The next technical Lemma allows us to restrict the operator 
to a  subdomain of $\Omega$. This requires to modify both the sets
$\Omega(x)$ and the function $h$.  Thanks  to the assumptions,
in particular the density bound for $\Sigma$ in \eqref{Sigmadef}, it
turns out the the restricted operator satisfies the same 
properties  of the original one\EEE.
\begin{lemma}[Localization]\label{localization}
Let $f\in C(\Omega)$ and assume that $v$ solves in a viscosity sense
\[
h(x)v(x)\EEE+\mathcal{L}_s(\Omega(x),v(x)\EEE)\le f(x)\EEE \ \ \ \mbox{in} \ \ \Omega.
\] 
If the the open set $O\subset\Omega$ is such that $v\le 0$ in $\Omega\setminus O$, then $v$ also solves in the viscosity sense
\[
j(x)v(x)\EEE+\mathcal{L}_s(\Xi(x),v(x)\EEE)\le f(x)\EEE \ \ \ \mbox{in} \ \ O,
\]
where $\Xi(x)=\Omega(x)\cap O$ and $j(x)=h(x)+\int_{\Omega(x)\setminus
  O}\frac{dy}{|x-y|^{N+2s}}$.  By additionally assuming  % Let
                                % us assume moreover
                                % \eqref{ostationary} and
                                % \eqref{Sigmadef} and
that $O$ coincides with some ball $\tilde B\subset \Omega$  and by
setting  $\tilde d(x)=\mbox{dist}(x,\partial \tilde B)$, it holds true that
\be\label{restrictedh}
c_1\tilde d(x)^{-2s}\le j(x)\le c_2\tilde d(x)^{-2s} \ \ \ x\in \tilde B.
\ee
\end{lemma}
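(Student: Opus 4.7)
I would split the argument into two independent pieces: the restricted viscosity inequality, and the pointwise two-sided bound on $j$ when $O=\tilde B$.

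\emph{Part 1: Viscosity inequality for the restricted operator.} The natural tool is the equivalent formulation of Lemma~\ref{def2}. Let $\varphi\in C^2(O)$ touch $v$ from above at $x_0\in O$ with $\varphi(x_0)=v(x_0)$, and fix $B_r(x_0)\subset O$. Define the global auxiliary
\[
\varphi_r(y)=\begin{cases}\varphi(y),& y\in B_r(x_0),\\ v(y),& y\in\Omega\setminus B_r(x_0).\end{cases}
\]
Since $\varphi\geq v$ on $O\supset B_r(x_0)$ and $\varphi_r=v$ elsewhere, $\varphi_r\geq v$ on all of $\Omega$ with equality at $x_0$. Lemma~\ref{def2} applied to the original equation then gives $h(x_0)v(x_0)+\mathcal{L}_s(\Omega(x_0),\varphi_r(x_0))\leq f(x_0)$. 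Splitting the integration domain into $\Xi(x_0)=\Omega(x_0)\cap O$ and $\Omega(x_0)\setminus O$, and using that $\varphi_r(x_0)=v(x_0)$ and $\varphi_r=v$ on $\Omega\setminus O$, I compute
\[
\int_{\Omega(x_0)\setminus O}\frac{v(x_0)-v(y)}{|x_0-y|^{N+2s}}\,dy=(j(x_0)-h(x_0))\,v(x_0)-\int_{\Omega(x_0)\setminus O}\frac{v(y)}{|x_0-y|^{N+2s}}\,dy.
\]
The last term is nonpositive because $v\leq 0$ on $\Omega\setminus O$; absorbing $h(x_0)v(x_0)$ into $j(x_0)v(x_0)$ and discarding the nonpositive term yields $j(x_0)v(x_0)+\mathcal{L}_s(\Xi(x_0),\varphi_r(x_0))\leq f(x_0)$, which by the converse direction of Lemma~\ref{def2} (applied now to the restricted operator on $O$) is precisely the claimed viscosity inequality. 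Note that $x_0\in O$ is at positive distance from $\partial O$, so the split integral over $\Omega(x_0)\setminus O$ is finite.

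\emph{Part 2: Two-sided bounds on $j$ when $O=\tilde B$.} The upper bound is immediate: $\Omega(x)\setminus \tilde B\subset \tilde B^c$, so the same computation as in Lemma~\ref{hbeha} gives $\int_{\tilde B^c}|x-y|^{-N-2s}\,dy\leq \frac{\omega_N}{2s}\tilde d(x)^{-2s}$, while $h(x)\leq \beta d(x)^{-2s}\leq \beta\tilde d(x)^{-2s}$ since $\tilde d(x)\leq d(x)$. For the lower bound I would distinguish two regimes in the ratio $\tilde d(x)/d(x)$. If $\tilde d(x)\geq c^\ast d(x)$ for a fixed $c^\ast>0$, then $h(x)\geq \alpha d(x)^{-2s}\geq \alpha(c^\ast)^{2s}\tilde d(x)^{-2s}$ and there is nothing further to show. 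Otherwise $\tilde d(x)<c^\ast d(x)$ with $c^\ast\leq \zeta/4$: setting $r=\tilde d(x)$, assumption \eqref{ostationary} makes $\Omega(x)\cap (B_{2r}(x)\setminus B_r(x))$ a translate of $\Sigma\cap(B_{2r}(0)\setminus B_r(0))$, whose measure is at least $q|B_{2r}\setminus B_r|$ by \eqref{Sigmadef}. Since $\tilde B$ is a ball, the tangent half-space $H=\{(y-\bar x)\cdot e>0\}$ at the closest boundary point $\bar x$ is contained in $\tilde B^c$ by convexity, and $H$ carves out a subset of positive solid angle within the annulus $B_{2r}(x)\setminus B_r(x)$. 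Intersecting with $\Omega(x)$ and bounding the integrand below by $(2r)^{-N-2s}$ would then give $\int_{\Omega(x)\setminus\tilde B}|x-y|^{-N-2s}\,dy\geq c\,\tilde d(x)^{-2s}$.

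\emph{Main obstacle.} The delicate point in the regime $\tilde d(x)\ll d(x)$ is that a bare density bound on $\Sigma$ in the annulus and a bare density bound on $\tilde B^c\cap H$ in the annulus do not automatically yield a quantitative density for their intersection. The essential tool to close the gap is the central symmetry $\Sigma=-\Sigma$, which via the change of variable $z=x-y$ allows me to reflect the mass of $\Sigma$ sitting on the $\tilde B$-side of $x$ to the complementary cone in the direction of the outward normal $e$, where it contributes to $\Omega(x)\cap\tilde B^c$. Producing the desired lower bound with a constant $c_1>0$ that does not degenerate as $q\to 0^+$ or as $\tilde d(x)/R\to 0^+$ is the principal technical difficulty of the proof.
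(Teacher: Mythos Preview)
Your Part~1 is correct and essentially identical to the paper's argument: both split the nonlocal integral over $\Xi(x_0)$ versus $\Omega(x_0)\setminus O$, use $\varphi_r=v$ on $\Omega\setminus B_r(x_0)\supset \Omega(x_0)\setminus O$, and discard the sign-definite term $\int_{\Omega(x_0)\setminus O}\frac{v(y)}{|x_0-y|^{N+2s}}dy\le 0$. The only cosmetic difference is that the paper explicitly extends the test function to all of $\Omega$, whereas you build $\varphi_r$ directly; since $\varphi$ is only used on $B_r(x_0)\subset O$ this is immaterial. Your upper bound on $j$ and the regime $\tilde d(x)\ge c^\ast d(x)$ are also handled correctly.

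The gap is in the small-$\tilde d$ regime. Symmetry $\Sigma=-\Sigma$ indeed gives you density $q/2$ of $\Sigma$ in each half of the annulus $B_{2r}(0)\setminus B_r(0)$ (taking $r=\tilde d(x)$). But landing in $\tilde B^c$ via the supporting half-space $H$ forces you to subtract the slab $\{-r\le z_1\le 0\}$, whose measure in the half-annulus is a \emph{fixed} positive fraction independent of $q$; when $q$ is small this subtraction wipes out the entire density and your lower bound becomes vacuous. Symmetry alone does not close this. The paper's remedy is to replace $B_{2r}$ by $B_{kr}$ with $k=k(q)$ chosen large: iterating the dyadic density condition \eqref{Sigmadef} gives $|\Sigma\cap B^-_{kr}(0)|\ge \tfrac{q}{2}|B_{kr}(0)|$, while the slab $\{-r\le z_1\le 0\}\cap B_{kr}(0)$ has relative measure $\sim 1/k$; choosing $k$ so that $|B_1(0)\cap\{-1/k\le y_1\le 0\}|<\tfrac{q}{4}\omega_N$ leaves a residual $\gtrsim q(kr)^N$ of $\Sigma$-mass in $\{z_1\le -r\}\subset \tilde B^c$, and bounding the kernel below by $(kr)^{-N-2s}$ yields $c_1\tilde d(x)^{-2s}$. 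Note this forces the threshold $c^\ast$ separating your two regimes to depend on $q$ as well (you need $kr\le \zeta d(x)$, i.e.\ $c^\ast\le \zeta/k$), and $c_1$ will legitimately depend on $q$---your stated goal of a bound ``that does not degenerate as $q\to 0^+$'' is neither needed nor achievable.
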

\begin{proof}
Let us assume that $\max_{O}( v-\varphi)=(v-\varphi)(\bar x)=0$ and
that $B_r(\bar x)\subset\subset O$. It \EEE is possible to extend
$\varphi$ to all $\Omega$ so that $\max_{\Omega} (
v-\varphi)=(v-\varphi)(\bar x)=0$ (with a slight abuse of notation,
we still indicate \EEE the extension by \EEE $\varphi$). Then, we have
\begin{align*}
&f(\bar x)\ge h(\bar x)v(\bar x)+\int_{B_r(\bar x)\cap \Omega(\bar
  x)}\frac{\varphi(\bar x)-\varphi(y)}{|\bar
  x-y|^{N+2s}}dy+\int_{\Omega(\bar x)\setminus B_r(\bar
  x)}\frac{v(\bar x)-v(y)}{|\bar x-y|^{N+2s}}dy\\
&\quad
=\left[h(\bar x)+\int_{\Omega(x)\setminus
  O}\frac{dy}{|x-y|^{N+2s}}\right]v(\bar x)+\int_{B_r(\bar x)\cap
  \Sigma(\bar x)}\frac{\varphi(\bar x)-\varphi(y)}{|\bar
  x-y|^{N+2s}}dy+\int_{\Xi(\bar x)\setminus B_r(\bar
  x)}\frac{v(\bar x)-v(y)}{|\bar x-y|^{N+2s}}dy\\
&\quad 
-\int_{\Omega(x)\setminus O}\frac{v(y)}{|x-y|^{N+2s}}dy\ge \left[h(\bar x)+\int_{\Omega(x)\setminus O}\frac{dy}{|x-y|^{N+2s}}\right]v(\bar x) +\int_{\Xi(\bar x)}\frac{\varphi_r(\bar x)-\varphi_r(y)}{|\bar x-y|^{N+2s}}dy,
\end{align*}
where the last \EEE inequality comes from the fact that \EEE $v\le 0$ in $\Omega\setminus O$.

Let us consider now the case $O\equiv \tilde B$. The estimate from
above in \eqref{restrictedh} can be deduced as in
\eqref{easypart}.  We omit the \EEE details. To show the estimate from below, fix $k>1$ so that
\[
c- \frac 2{\omega_n}|B_1(0)\cap A(k)|\ge \frac12 c,
\]
where $c$ is the constant in \eqref{Sigmadef} and $A(k)=\{y\in\mathbb{R}^N \ : \ -k^{-1}\le y_1\le 0\}$. We also point out that the symmetry of $\Sigma$ implies, for $B^{\pm}_r(0)=B_r(0)\cap \{z\le0\}$ and $r>0$, that
\be\label{auxilia}
|\Sigma\cap B^{\pm}_{r}(0)|\ge \frac c2|B_{r}(0)|.
\ee
Moreover, without loss of generality, we assume that $\tilde
B=\{|y|\le 1\}$ (this is always true up to a translation and 
dialation)  and take $x\in \{|y|\le 1\}$ such that $k\tilde d(x)< \zeta d(x)$. Le us take now a system of coordinates with origin in the center of $\tilde B$ such that $|x|=-x_1=1-\tilde d(x)$. 
It follows that $\Omegat(x)\cap B_{k\tilde d(x)}(0)=\Sigma \cap B_{k\tilde d(x)}(0)$ and
\begin{align}\label{cip}
 \int_{{\Omega}(x)\setminus \tilde B}\frac{dy}{|x-y|^{N+2s}}\ge  \int_{{\Omega}(x)\cap B_{k\tilde d(x)}(x) \setminus \{y_1>-1\}}\frac{dy}{|x-y|^{N+2s}}=\int_{{\Sigma}\cap B^-_{k\tilde d(x)}(0) \setminus \{z_1>-\tilde d(x)\}}\frac{dz}{|z|^{N+2s}}
\\
\ge\tilde d(x)^{-n-2s}|{\Sigma}\cap B^-_{k\tilde d(x)}(0) \setminus \{z_1>-\tilde d(x)\}|\nonumber.
\end{align}
We get that 
\begin{align}\label{ciop}
|{\Sigma}\cap B^-_{k\tilde d(x)}(0) \setminus \{z_1>-d(x)\}|\ge |{\Sigma}\cap B^-_{k\tilde d(x)}(0)|-|\{-d(x)\le z_1<0\}\cap B^-_{k\tilde d(x)}(0)|
\\
\ge \frac c2 |B_{k\tilde d(x)}|- (kd(x))^N| B_1(0)\cap A(k) |\ge \frac {\omega_n}{4}c(kd(x))^N,\nonumber
\end{align}
where we have used \eqref{auxilia} and the definition of $k$ in the last two inequality respectively. Putting together \eqref{cip} and \eqref{ciop} and recalling the condition  $k\tilde d(x)< \zeta d(x)$, we deduce that 
\[
\int_{{\Omega}(x)\setminus \tilde B}\frac{dy}{|x-y|^{N+2s}}\ge c_1 \tilde d(x)^{-2s} \ \ \ \mbox{for all $x\in \tilde B$ with $k\tilde d(x)\le\zeta$dist$(\tilde B, \Omega)$}.
\]
This, together with the definition of $j(x)$, completes the proof of the Lemma.
\EEE
\end{proof}

%The following is a key Lemma for the following results.
\begin{lemma}[Refined Maximum Principle]\label{aap} Assume \eqref{alpha} and \eqref{Sigmadef}.
Let $\lambda>0$, $0 \leq \EEE f\in C(\Omega)$, and assume
that  $u\in \LSC_b(\Omega)$\EEE,  with
$u>0$ in $\Omega$ and $u=0$ on $\partial\Omega$, satisfies 
\[
h(x)u(x) +\mathcal{L}_s(\Omega(x),u(x) ) \ge \lambda u(x) +f(x) .
\]
Moreover, let  $v\in USC_b(\Omega)$\EEE, with $v\le 0$
on $\partial \Omega$, satisfy
 \[
 h(x)v(x) \EEE+\mathcal{L}_s(\Omega(x),v(x) \EEE)\le \lambda v(x) \EEE.
\]
If $f$ is non trivial then $v\le0$. If $f\equiv0$ and there exists $x_0\in\Omega$ such that $v(x_0)>0$ then $v=tu$ for some $t>0$.
\end{lemma}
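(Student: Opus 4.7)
The plan is to argue by contradiction, following the Berestycki--Nirenberg--Varadhan strategy \cite{bere} adapted to our nonlocal setting. Assume $v$ is positive somewhere, and define
\[
\tau^{\ast} = \inf\{t>0 \colon tu\ge v \text{ in }\Omega\}.
\]
To show $\tau^{\ast}<\infty$, first observe that $v$ is a bounded sub-solution with bounded right-hand side $\lambda v$, so Corollary \ref{ellcomp} together with the barrier of Lemma \ref{barriercone} yields a boundary decay $v(x)\le C_1 d(x)^\eta$ for some small $\eta>0$. A nonlocal Hopf-type lower bound $u(x)\ge c_1 d(x)^\eta$ near $\partial\Omega$ is derived by localizing $u$ on a small ball via Lemma \ref{localization} and comparing with $\varepsilon d^\eta$; together with the strict positivity of $u$ on compact subsets of $\Omega$, this provides $v/u\le C$ in $\Omega$. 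Hence $\tau^{\ast}<\infty$, and upper/lower semicontinuity give $\tau^{\ast}u\ge v$ in $\Omega$.

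Setting $w:=\tau^{\ast}u-v\in\LSC_b(\Omega)$, Corollary \ref{ellipticdif} gives that $w$ is a viscosity super-solution of
\[
h(x)w(x)+\mathcal L_s(\Omega(x),w(x))\ge\lambda w(x)+\tau^{\ast}f(x)\quad\text{in }\Omega,
\]
with $w\ge 0$ in $\overline\Omega$. The proof of Lemma \ref{strongmax} adapts without modification: at any interior zero $x_0$ the extra term $\lambda w(x_0)$ vanishes and $\tau^{\ast}f(x_0)\ge 0$ preserves the sign, and the chain-of-balls argument based on \eqref{Sigmadef} delivers the dichotomy $w\equiv 0$ or $w>0$ in $\Omega$. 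When $f$ is nontrivial, $w\equiv 0$ implies $v=\tau^{\ast}u$, which inserted into the two viscosity inequalities forces $\tau^{\ast}f\le 0$ and hence $\tau^{\ast}=0$, $v\le 0$---a contradiction. When $f\equiv 0$ and $v(x_0)>0$, the branch $w\equiv 0$ gives exactly the claim $v=\tau^{\ast}u$.

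In both settings, the branch $w>0$ must be excluded: this requires producing $\delta>0$ with $w\ge\delta u$, yielding $(\tau^{\ast}-\delta)u\ge v$ and contradicting the minimality of $\tau^{\ast}$. Running again the Hopf-type argument for $w$ (with positive right-hand side $\tau^{\ast}f$ in the first case, and with the interior strict positivity of $w$ replacing it in the second), one obtains $w(x)\ge c_2 d(x)^\eta$ near $\partial\Omega$; combined with a comparable upper bound $u(x)\le C_2 d(x)^\eta$ (proved in the same spirit, using the boundedness of $u$ as ``initial datum'' on a small ball and the growth \eqref{alpha} of $h$ to set up a super-solution barrier), interior compactness gives $w\ge\delta u$. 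The chief technical obstacle lies precisely in this boundary-comparison step: deriving the matched estimates $c d^\eta \le w, u \le C d^\eta$ near $\partial\Omega$ from the sole super-solution structure, which requires carefully tailored barriers using the density condition \eqref{Sigmadef} and the blow-up \eqref{alpha} of $h$.
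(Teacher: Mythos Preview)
Your overall strategy---define the critical multiplier $\tau^{\ast}$, show $w=\tau^{\ast}u-v\ge 0$, apply the strong maximum principle, and rule out the strict branch---is the same as the paper's. The genuine gap is in how you handle the boundary. You try to control the ratio $v/u$ (and later $w/u$) near $\partial\Omega$ via matched pointwise estimates $c\,d(x)^{\eta}\le u,w\le C\,d(x)^{\eta}$. Neither direction is available: $u$ is only assumed to be a bounded lower-semicontinuous supersolution with $u=0$ on $\partial\Omega$, and nothing in the hypotheses forces $u(x)\le C\,d(x)^{\eta}$ (the barrier of Lemma~\ref{barriercone} is a \emph{super}solution, so comparison only gives lower bounds for supersolutions, not upper bounds). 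Likewise the Hopf-type lower bound $u\ge c\,d^{\eta}$ is not provided by any result in the paper, and your sketch via Lemma~\ref{localization} does not produce a subsolution barrier of the right form. You correctly identify this as the ``chief technical obstacle,'' but it is not a technicality---it is the missing idea.

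The paper avoids boundary Hopf estimates altogether by exploiting the blow-up \eqref{alpha} of $h$ directly. Fix $\rho>0$ so small that $\alpha\,d(x)^{-2s}>\lambda$ on the strip $\Omega_{\rho}=\{d(x)<\rho\}$. For any $t$ large enough, $z_t=v-tu<0$ on the compact set $\{d\ge\rho\}$ by interior positivity of $u$ and boundedness of $v$. Then Lemma~\ref{localization} restricts the subsolution inequality for $z_t$ to $\Omega_{\rho}$, producing a coercive operator $j(x)-\lambda>0$ there; ordinary comparison (Corollary~\ref{ellcomp}) gives $z_t\le 0$ on $\Omega_{\rho}$ as well. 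This simultaneously shows $\tau^{\ast}<\infty$ and, repeated at $t$ slightly below $\tau^{\ast}$ once $z_{\tau^{\ast}}<0$ is assumed, excludes the strict branch---no Hopf lemma, no rate-matching at the boundary. That localization step is the mechanism you should replace your boundary-comparison paragraph with.
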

\begin{proof}
Let $z_t=v-tu$ for $t>0$. Then, thanks to Corollary \ref{ellipticdif} we have  that $z_t$ satisfies
\be\label{25.6}
h(x)z_t(x)+\mathcal{L}_s(\Omega(x),z_t(x)\EEE)\le \lambda z_t(x).
\ee
Notice that for all  $\rho>0$ and any $t>0$ such that
$$t> \frac{\sup_{d(x)>\frac{\rho}{2}}v(x)}{\inf_{d(x)>\frac{\rho}{2}}u(x)}$$
(recall that $u>0$ in $\Omega$) we have that
\[
\{x\in\Omega \ : \ d(x)\ge\rho \}\subset \{x\in\Omega \ : \ z_t<0 \}.
\]  
We now use \EEE Lemma \ref{localization} to restrict \eqref{25.6}
to \EEE  $\Omega_\rho=\{x\in\Omega \ : \ d(x)<\rho \}$ and get
\be\label{25.6bis}
j(x)z_t+\mathcal{L}_s(\Xi(x),z_t)\le \lambda z_t \ \ \ \mbox{in} \ \ \ \Omega_\rho,
\ee
where $j(x)=h(x)+\int_{\tilde{\Omega}(x)\setminus\Omega_\rho
}\frac{dz}{|z|^{N+2s}}$ and $\Xi(x)=\Omega(x)\cup
\Omega_\rho$. Taking $\rho$ such that $\rho<
\left(\frac{\alpha}{\lambda}\right)^{\frac1{2s}}$ and using the
coercivity assumption \eqref{alpha} on $h(x)$, it follows that
$j(x)-\lambda>0$. Then, since $z_t\le 0$ on $\partial\Omega_{\rho}$,
we can apply the comparison principle to \eqref{25.6bis} and \EEE
deduce that $z_t\le0$ in $\Omega_\rho$. This means that $z_t\le 0$ in $\Omega$.\\

Let us focus on the case $0\not = f \EEE \ge0$ and assume, by contradiction, that there exists $x_0\in\Omega$ such that $v(x_0)>0$. Then, up to a multiplication with a positive constant, we have that $v(x_0)>u(x_0)$.\\
Let us set
\[
\tau=\inf\{ t \ : \ z_t\le0 \ \mbox{in} \  \Omega\}
\]
and recall  that $\tau>1$ since $v(x_0)>u(x_0)$. As 
$z_{\tau}\le 0$ we get 
\[
h(x)z_t(x)\EEE+\mathcal{L}_s(\Omega(x),z_t(x)\EEE)\le \lambda
z_t(x)\EEE\le 0 \quad \forall t \geq \tau.
\] 
We can apply the strong maximum principle of Lemma
\ref{strongmax}  to prove that either $z_{\tau}\equiv0$ or
$z_{\tau}<0$. This latter case is not possible since  it 
would  contradict  the definition of $\tau$.  Having that  
$z_{\tau}\equiv 0$ we get  $v_{\tau} := v=\tau  u$. We have 
\[
h(x)v_{\tau}(x)+\mathcal{L}_s(\Omega(x),v_{\tau}(x))\le \lambda v_{\tau} (x)
\]
by assumption and, since  $v_{\tau}= \tau  u$, 
\[
h(x)v_{\tau}(x)+\mathcal{L}_s(\Omega(x),v_{\tau}(x))\ge \lambda v_{\tau}(x)+ f (x). 
\]
By combining these two inequality, using Corollary \ref{ellipticdif}  and recalling that $f$
is nontrivial, we obtain  a contradiction. Hence,  $v\le0$.

Let us now  consider the case $f\equiv0$ and $v(x_0)>0$ for
some $x_0\in\Omega$. Upon multiplying by  a positive
constant, we can assume that $v(x_0)>u(x_0)$. Following exactly the
same argument and notation of the previous step we obtain that either
$z_{\tau}\equiv0$ or $z_{\tau}<0$. The latter option again \EEE
leads to a contradiction. Hence,  $z_{\tau}\equiv0$, which
corresponds to the assertion. \EEE %. %implies the required result.
\end{proof}

\begin{teo}\label{approximation} Assume \eqref{alpha}-\eqref{Sigmadef}.
Given $\lambda>0$ and a nonzero $0\le f\in C(\Omega)$ satisfying \eqref{fcon}, let us assume that there exists  $0\le  u\in \LSC_b(\Omega)$ \EEE such that 
\[
\begin{cases}
h(x)u(x)\EEE+\mathcal{L}_{s} (\Omega(x),u(x)\EEE)\ge\lambda u(x)\EEE+ f(x) \qquad & \mbox{in } \Omega,\\
 u(x) = 0 & \mbox{on }  \partial \Omega.
\end{cases}
\]
Then, for any $\mu\le \lambda$ and $ |g|\le f$, there exists a solution to 
\be\label{24-6bis}
\begin{cases}
h(x)v(x)\EEE+\mathcal{L}_{s} (\Omega(x),v(x)\EEE)= \mu v(x)\EEE+g(x) \qquad & \mbox{in } \Omega,\\
 v(x) = 0 & \mbox{on }  \partial \Omega.
\end{cases}
\ee
 If moreover $g$ is nonnegative and nontrivial then $v>0$.\EEE
\end{teo}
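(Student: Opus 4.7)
The plan is to apply the elliptic version of the Perron-method existence result (Theorem \ref{perronpar}, transferred to the stationary setting as in Remark \ref{ellipticparabolic}), using the hypothesis supersolution $u$ as the upper barrier $\overline l$ and $-u$ as the lower barrier $\underline l$.

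First I verify that these are admissible barriers for the target problem \eqref{24-6bis}. Since $u\ge 0$ and $\mu\le\lambda$, one has $\mu u\le \lambda u$, and since $|g|\le f$ one has both $g\le f$ and $-g\le f$. The assumed inequality $hu+\mathcal{L}_{s}(\Omega(x),u)\ge \lambda u+f$ therefore implies $hu+\mathcal{L}_{s}(\Omega(x),u)\ge \mu u+g$ in the viscosity sense, so $u$ is a supersolution to \eqref{24-6bis}. For $-u$, if $\varphi\in C^2(\Omega)$ touches $-u$ from above at $x_0$, then $-\varphi$ touches $u$ from below there, so the supersolution property of $u$ at level $\lambda$ gives
\[
-h(x_0)\varphi(x_0)-\mathcal{L}_{s}(\Omega(x_0),\varphi(x_0))\ge \lambda u(x_0)+f(x_0).
\]
Rearranging and adding the nonpositive quantities $(\mu-\lambda)u(x_0)$ and $-f(x_0)-g(x_0)$ yields
\[
h(x_0)\varphi(x_0)+\mathcal{L}_{s}(\Omega(x_0),\varphi(x_0))\le \mu(-u)(x_0)+g(x_0),
\]
so $-u$ is a subsolution to \eqref{24-6bis}. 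Both $u$ and $-u$ vanish on $\partial\Omega$. An elliptic analogue of the Perron procedure in Theorem \ref{perronpar}, relying on Corollary \ref{ellcomp} and the stability Lemma \ref{stability} in its elliptic form (Remark \ref{ellipticparabolic}), then produces a viscosity solution $v\in C(\overline\Omega)$ of \eqref{24-6bis} with $-u\le v\le u$ in $\Omega$ and $v=0$ on $\partial\Omega$.

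For the positivity claim, assume in addition that $g\ge 0$ is nontrivial. I first observe that the hypothesis supersolution $u$ must be strictly positive in $\Omega$: from $u\ge 0$, $\lambda>0$, and $f\ge 0$, the inequality $hu+\mathcal{L}_{s}u\ge \lambda u+f\ge 0$ holds, so Lemma \ref{strongmax} gives the dichotomy $u\equiv 0$ or $u>0$ in $\Omega$, and the first case would force $f\le 0$, contradicting the nontriviality of $f$. Next, $w=-v\in \USC_b(\Omega)$ satisfies $hw+\mathcal{L}_{s}(\Omega(x),w)=\mu w-g\le \mu w$ in the viscosity sense, with $w\le 0$ on $\partial\Omega$. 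Applying Lemma \ref{aap} with $u$ as the strictly positive supersolution at level $\mu$ (its forcing $f$ being nontrivial) and $w$ as the subsolution at level $\mu$ with zero forcing, I obtain $w\le 0$, hence $v\ge 0$. Finally, $v\ge 0$ solves $hv+\mathcal{L}_{s}v=\mu v+g\ge 0$ with $v=0$ on $\partial\Omega$, so Lemma \ref{strongmax} applied once more gives $v\equiv 0$ or $v>0$; the former is excluded because it would force $g\equiv 0$. Thus $v>0$ in $\Omega$.

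The only point that is not entirely automatic is the verification that the Perron construction of Theorem \ref{perronpar} carries over verbatim to the elliptic case with semicontinuous barriers $\pm u$; this amounts to rerunning that argument after suppressing the time variable and invoking the elliptic comparison principle in place of Lemma \ref{timecomp}, which is routine once Corollary \ref{ellcomp} is in hand.
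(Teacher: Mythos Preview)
Your approach differs from the paper's. The paper does not run Perron directly on \eqref{24-6bis}; instead it iterates $v_0=0$, $h v_n+\mathcal L_s v_n=\mu v_{n-1}+g$, so that each step is a problem of type \eqref{10:17} to which Theorem \ref{teide} and Corollary \ref{ellcomp} apply as stated. A uniform bound $|v_n|\le u$ follows by induction, then $|v_n|\le Qd(x)^{\eta}$ via Lemma \ref{barriercone}, and finally the half-relaxed limits $\overline v,\underline v$ are identified through Lemma \ref{aap}.

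Your direct Perron argument has a genuine gap at the comparison step. Corollary \ref{ellcomp} is stated for \eqref{10:17}, that is, for $v\mapsto hv+\mathcal L_s v$ with a right-hand side independent of $v$. Equation \eqref{24-6bis} carries the zeroth-order term $\mu v$, and for $\mu>0$ the argument behind Corollary \ref{ellcomp} fails: at an interior maximum $\bar x$ of $w=v^*-v_*$ with $w(\bar x)=M>0$, testing with the constant $M$ gives only $h(\bar x)M\le \mu M$, which is no contradiction once $h(\bar x)\le\mu$. What is actually needed to compare $v^*$ and $v_*$ is the Refined Maximum Principle, Lemma \ref{aap}, and to invoke it you must first know $u>0$ in $\Omega$ --- which you prove, but only after the Perron construction. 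A second, smaller issue: your barrier $u$ is merely $\LSC_b$, so $|v|\le u$ alone does not force $v^*=0$ on $\partial\Omega$ (for $x\in\partial\Omega$ one only has $\liminf_{y\to x}u(y)\ge 0$, not $\limsup\le 0$); the paper avoids this by producing the continuous barrier $Qd^{\eta}$ at every step of the iteration. Both points are repairable --- move the strict positivity of $u$ up front, replace Corollary \ref{ellcomp} by Lemma \ref{aap} in the Perron comparison, and derive a $Qd^{\eta}$ bound to secure the boundary condition --- but this is not ``routine once Corollary \ref{ellcomp} is in hand''.
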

\begin{proof}
Let us set $v_0=0$ and recursively \EEE define the sequence $\{v_n\}$ of solutions to
\[
\begin{cases}
h(x)v_n(x)\EEE+\mathcal{L}_{s} (\Omega(x),v_n(x))=\mu v_{n-1}+ g(x) \qquad & \mbox{in } \Omega,\\
 v_n(x) = 0 & \mbox{on }  \partial \Omega.
\end{cases}
\]
Notice that the existence of each  $v_n$ is ensured by
Theorem \ref{existence}. We now  prove that $ |v_n|\le u$
by induction on $n$.  %, for any $n=1,2,\cdots$. 
 Let $n= 1$. As $|g|\leq f$ the comparison principle from
Corollary \ref{ellcomp} ensures that $|v_1|\leq u$.  
%there is nothing to prove. %Then assume that the statement
                                %is true for $n-1$ and let us prove it
                                %for $n$. 
 Assume that $|v_{n-1}| \leq u$. Since $|\mu v_{n-1}+ g(x)|\le \lambda u(x)+ f(x)$, we can use the comparison principle (see Corollary \ref{ellcomp}) to deduce that $|v_{n}|\le u$.  In case $g\ge0$ a similar argument shows that $0\le v_{n}\le v_{n+1}$.\\
%Thanks to Lemma \ref{differenza}, the function $w=u-v_1$ satisfies 
%\[
%h(x)w(x)\EEE+\mathcal{L}_{s} (\Omega(x),w(x)\EEE)=\lambda u(x)\EEE-\mu v_{n-1}(x)\EEE+ f(x)-g(x)\ge 0 \qquad  \mbox{in } \Omega,
%\]
%where the inequality comes from the assumption on $\mu$ and $g$ and
%the fact that we already know that $|v_{n-1}|\le u$. Since $w=0$ on
%$\partial\Omega$,  we apply again Lemma \ref{differenza}  (da
%anticipare e scrivere nella versione ellittica) \EEE to deduce that
%$w\ge0$. Similarly we show that $u+v_n\ge0$. This proves \EEE that $|v_n|\le u\le C d(x)^{2s}$. 
This implies that $|\mu v_{n-1}+ g(x)|\le \lambda u+f \le C d(x)^{\eta_f-2s}$, where we have used assumption \eqref{fcon} for the last inequality.
Using Lemma \ref{barriercone}, we can conclude that there exist a
large $Q$ (independent of $n$) such that $\overline{l}(x)=Qd(x)^{\eta}$, with $\eta=\min\{\bar \eta, \eta_f\}$, solves
\[
h(x)\overline{l}(x)+\mathcal{L}_{s} (\Omega(x),\overline{l}(x))\ge \mu v_{n-1}+ g(x) \qquad  \mbox{in } \Omega.
\]
Thanks again to the comparison principle, we deduce that $v_n\le
Qd(x)^{\eta}$. Similarly, it follows that $v_n\ge- Qd(x)^{\eta}$.  Let us consider then the half-relaxed limits of the sequence $v_n$
\[
\overline{v}(x)=\sup\{\limsup_{n\to\infty} v_n(x_n) \ : \ x_n\to x\}, \ \ \ \ \underline{v}(x)=\inf\{\liminf_{n\to\infty} v_n(x_n) \ : \ x_n\to x\}.
\]
Notice that by construction both $\overline{v}$ and $\underline{v}$ vanish on $\partial\Omega$. Taking advantage of Lemma \ref{stability}, we deduce that $\overline{v}$ and $\underline{v}$ are respectively sub and super-solution to \eqref{24-6bis}. Moreover, Corollary \ref{ellipticdif} implies that $w=\overline{v}-\underline{v}$ solves
\[
h(x)w(x)+\mathcal{L}_{s} (\Omega(x),w(x))\le \mu w(x) \ \  \mbox{ in }\ \ \Omega, \ \ \mbox{ and } \ \ w=0 \ \ \mbox{ on } \ \ \partial\Omega.\\
\]
Since $u$ satisfies
\[
h(x)u(x)+\mathcal{L}_{s} (\Omega(x),u(x))\ge\mu u(x)+ f(x) \qquad  \mbox{in } \Omega,\\
\]
we  may   use Lemma \ref{aap} to conclude that $w\le0$, namely
$\overline{v}\le\underline{v}$. Due to the natural order between the
two functions, we deduce that $v=\overline{v}=\underline{v}$ is a
viscosity solution to \eqref{24-6bis}. If $g \geq 0$ and not trivial,
by  using the strong maximum principle we easily deduce that $v>0$. \EEE
\end{proof}

Let us assume that the  nontrivial  $0\le f\in C(\Omega)$ 
satisfies \eqref{fcon} and  recall the definition of \EEE the set
\[
E_f=\{\lambda\in\mathbb R \ : \ \exists v\in C(\overline{\Omega}), \
v>0 \mbox{ in } \Omega, \ v=0 \mbox{ on } \partial\Omega, \
\mbox{such that}  \ hv+\mathcal{L}_s(\Omega, \EEE v)= \lambda v+f\}.
\]
Moreover, let \EEE
\be\label{28.06}
\lambda_f \EEE =\sup \ E_f.
\ee
As we shall see, $\lambda_f$ does not depend on the particular choice of $f$. By definition and thanks to Theorem \ref{approximation} we deduce that
\[
\mbox{if} \ \ \ g\le f   \ \ \ \mbox{then} \ \ \ \lambda_g\le \lambda_f.
\]
The following Lemma shows us that $\lambda_f$ is finite and that $E_f$ is a left semiline. %has no holes.
\begin{lemma}[]\label{acotado}
Assume \eqref{alpha}-\eqref{Sigmadef} and that $0\le f\in C(\Omega)$ is nonzero and  satisfies \eqref{fconintro}. Then, 
$\lambda_f$ is positive and finite and $E_f$ is a left semiline with
$E_f \not = \Rz$.
\end{lemma}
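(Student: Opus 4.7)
My plan is to split the assertion into three independent claims: (i) $E_f$ is a left semi-line, (ii) $\lambda_f>0$, and (iii) $\lambda_f<\infty$ (which in particular yields $E_f\neq\mathbb{R}$). The three steps will rely respectively on Theorem \ref{approximation}, Lemma \ref{barriercone}, and Lemma \ref{aap}.

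For (i), given $\lambda\in E_f$ with positive solution $v$, the function $v$ satisfies the supersolution inequality $hv+\mathcal{L}_sv\ge \lambda v+f$ (with equality). Applying Theorem \ref{approximation} with this $u=v$ and $g=f$ yields, for every $\mu\le\lambda$, a solution of the $\mu$-problem; the final statement of that theorem (since $f\ge 0$ is nontrivial) ensures it is strictly positive. Hence $\mu\in E_f$, so that $E_f\supseteq(-\infty,\lambda_f)$.

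For (ii), I use the barrier from Lemma \ref{barriercone}. Choose $\eta\in(0,\min\{\bar\eta,\eta_f\}]$ and set $u(x)=Q\,d(x)^\eta$; Lemma \ref{barriercone} yields $hu+\mathcal{L}_s u\ge Q(\alpha/2)\,d(x)^{\eta-2s}$ in the viscosity sense. From \eqref{fcon} and $d\le D:=\mathrm{diam}(\Omega)$ one has $f\le C\,D^{\eta_f-\eta}\,d(x)^{\eta-2s}$, so choosing $\lambda<\alpha/(4D^{2s})$ and $Q$ sufficiently large gives $hu+\mathcal{L}_su\ge \lambda u+f$, with $u=0$ on $\partial\Omega$. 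Theorem \ref{approximation} then produces a positive solution of the $\lambda$-problem, establishing $\lambda\in E_f$ and therefore $\lambda_f>0$.

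Step (iii) is the main obstacle. The idea is to exhibit, for large $\lambda$, a positive subsolution that contradicts, via the refined maximum principle, the existence of a positive solution to $hu+\mathcal{L}_s u= \lambda u+f$. Fix $x_0\in\Omega$ and $r>0$ with $\overline{B_r(x_0)}\subset\Omega$, and select $\phi\in C_c^\infty(\Omega)$ with $\phi\ge 0$, $\mathrm{supp}(\phi)=\overline{B_r(x_0)}$, $\phi(x_0)>0$, and rapid (e.g.\ exponential) decay at $\partial B_r(x_0)$, for instance $\phi(x)=\exp\bigl(-1/(r^2-|x-x_0|^2)\bigr)$ on $B_r(x_0)$, extended by zero. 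The key claim is
\[
M:=\sup_{\{\phi>0\}}\Bigl(h(x)+\frac{\mathcal{L}_s\phi(x)}{\phi(x)}\Bigr)<\infty.
\]
On any compact subset of $\{\phi>0\}$ both $h$ and $\mathcal{L}_s\phi$ (continuous on $\Omega$ by Lemma \ref{continuity}, since $\phi$ is smooth and compactly supported) are bounded, so the ratio is bounded. Near $\partial B_r(x_0)$ from inside, $\phi(x)\to 0^+$ exponentially while $\mathcal{L}_s\phi(x)\to -\int_{\Omega(x_\infty)}\phi(y)/|x_\infty-y|^{N+2s}\,dy$; this limit is strictly negative once $r$ is small enough that $\Omega(x_\infty)\cap B_r(x_0)$ has positive measure (guaranteed by \eqref{ostationary} together with $\Sigma=-\Sigma$). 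Hence the ratio drifts to $-\infty$ at $\partial\mathrm{supp}(\phi)$, the sup is attained on a compact subset, and $M$ is finite. For any $\lambda>\max(M,0)$, $\phi$ is a classical (hence viscosity) subsolution of $h\phi+\mathcal{L}_s\phi\le\lambda\phi$ in $\Omega$: on $\{\phi>0\}$ by definition of $M$, and on $\{\phi=0\}$ trivially since $\mathcal{L}_s\phi=-\int_{\Omega(x)}\phi(y)/|x-y|^{N+2s}\,dy\le 0$; moreover $\phi=0$ on $\partial\Omega$. If such a $\lambda$ belonged to $E_f$, the associated positive solution together with $\phi$ would satisfy the hypotheses of Lemma \ref{aap}, forcing $\phi\le 0$ in $\Omega$ and contradicting $\phi(x_0)>0$. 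Therefore $\lambda_f\le\max(M,0)<\infty$, which also gives $E_f\neq\mathbb{R}$. The main technical difficulty lies in the sign analysis of $\mathcal{L}_s\phi/\phi$ near $\partial\mathrm{supp}(\phi)$: the nonlocal operator ``sees'' the full profile of $\phi$, and it is essential that the rapid decay of $\phi$ combined with a strictly negative limit of $\mathcal{L}_s\phi$ makes this quotient diverge to $-\infty$ rather than $+\infty$.
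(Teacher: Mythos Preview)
Your argument is correct. Steps (i) and (ii) match the paper closely: the paper observes that for $\lambda<\alpha/\mathrm{diam}(\Omega)^{2s}$ the shifted operator $[h(x)-\lambda]+\mathcal{L}_s$ still satisfies \eqref{alpha}, so Theorem~\ref{teide} applies directly, and then invokes Theorem~\ref{approximation} to get the semiline property. Your route through Lemma~\ref{barriercone} in (ii) is an equivalent variant.

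Step (iii) is where you genuinely diverge. The paper \emph{restricts} the problem to a ball $B\subset\subset\Omega$ on which $f>0$: by Lemma~\ref{localization} any positive solution in $E_f$ becomes a supersolution of a restricted operator $j+\mathcal{L}_s(\Xi(\cdot),\cdot)$ on $B$; then the paper \emph{solves} $j w+\mathcal{L}_s(\Xi,w)=g$ on $B$ with $g\in C_c(B)$, $g\ge0$, and uses the strong maximum principle to get $g\le C_0 w$, turning $w$ into a positive subsolution of the $\lambda$-problem on $B$ whenever $\lambda>C_0$; Lemma~\ref{aap} on $B$ gives the contradiction. You instead work directly on $\Omega$ with an explicit smooth bump $\phi$ and establish $h\phi+\mathcal{L}_s\phi\le M\phi$ by a pointwise analysis of the ratio $\mathcal{L}_s\phi/\phi$, then apply Lemma~\ref{aap} on $\Omega$. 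Your approach is more elementary in that it avoids both Lemma~\ref{localization} and the existence theory on the subdomain; the paper's approach is more modular, reusing the solvability machinery rather than computing on an explicit test function. One point to make more precise in your write-up: the strict negativity of $\mathcal{L}_s\phi(x_\infty)=-\int_{\Omega(x_\infty)}\phi(y)|x_\infty-y|^{-N-2s}\,dy$ for every $x_\infty\in\partial B_r(x_0)$ requires $|\Omega(x_\infty)\cap B_r(x_0)|>0$, which does follow from \eqref{ostationary}--\eqref{Sigmadef} but not from ``$r$ small'' alone; rather, for any $r$ with $\overline{B_r(x_0)}\subset\Omega$, one uses that $\Sigma=-\Sigma$ implies $|\Sigma\cap B_\rho\cap\{z\cdot\nu>0\}|=\tfrac12|\Sigma\cap B_\rho|\ge c\rho^N$ for every unit vector $\nu$, while the tangent ball $B_r(r\nu)$ differs from the half-space $\{z\cdot\nu>0\}$ inside $B_\rho$ only by a slab of measure $O(\rho^{N+1}/r)$, which is negligible for small $\rho$.
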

\begin{proof}
Notice that for any $$\dys \lambda\in \left(-\infty,\frac{\alpha}{\mbox{diam}(\Omega)^{2s}}\right)$$ the operator 
\[
u\mapsto \EEE [h(x)-\lambda]u +\mathcal{L}(\Omega(x),u)
\]
fulfills assumptions \eqref{ostationary}-\eqref{alpha}. We can apply
the existence results and the strong maximum principle of the previous
chapter to deduce that
$(-\infty,\frac{\alpha}{\mbox{diam}(\Omega)^{2s}})\subset E_f$. 
 Moreover,  if
$\lambda\in E_f$, Theorem \eqref{approximation} assures that any
$\mu<\lambda$ belongs to $E_f$ as well. This proves that $E_f$  is
 a left semiline.  %too.\\ 

To show that $E_{f}\neq \mathbb{R}$ let us take $\lambda<\lambda_f $.  Since $E_f$  is  a left semiline, there exists some $v\in C(\overline \Omega)$ with $v=0$ on $\partial\Omega$ and strictly positive in $\Omega$, such that \EEE
\[
h(x)v(x)+\mathcal{L}_s(\Omega(x), v(x))= \lambda v(x)+f(x) \ \ \ \mbox{in the viscosity sense in } \ \Omega.
\]
 Now we want to \emph{restrict} this inequality to  a ball
$B\subset\subset\Omega$ such that $f>0$ in $B$\EEE. In order to do it,  for any $x\in
B$, we   define $\Xi(x)=\Omega(x)\cap B$. Taking advantage
of the positivity $v$, we can apply Lemma \ref{localization} to $-v$
and  deduce that
\be\label{cipcip}
j(x)v+{\mathcal{L}}_s(\Xi(x),v) \ge \lambda v+ f(x)  \ \ \ \mbox{in the viscosity sense in } \ B,
\ee
where 
\[
j(x)=h(x)+\int_{{\Omega}(x)\setminus B}\frac{dy}{|x-y|^{N+2s}}.
\]

Thanks to Lemma \ref{localization}, we have \EEE
that $j(x)$ satisfies \eqref{alphap} (by possibly changing the
\EEE constants) and that the family $\{\Xi(x)\}$ satisfies the same
kind of assumptions of $\{{\Omega}(x)\}$. Then, for any
positive continuous function $g$ \EEE  with compact support in $B$ there exists a unique viscosity solution to
\[
\begin{cases}
j(x)w(x)\EEE+\mathcal{L}_s(\Xi(x),w(x)\EEE)= g(x) \qquad & \mbox{in } B,\\
 w(x) = 0 & \mbox{on }  \partial B.
\end{cases}
\]
Thanks to the strong maximum principle of Lemma \ref{strongmax} and the fact that $g$ has compact support in $B$, it
follows that $0<g\le C_0 w$ for some positive constant $C_0$. 
  If $C_0<\lambda$, we would get that 
\be\label{ciopciop}
j(x)w(x)\EEE+\mathcal{L}_s(\Xi(x),w(x)) \le  \lambda w(x)\EEE \ \ \ \mbox{in the viscosity sense in } \ B.
\ee
Applying Lemma \ref{aap} to \eqref{cipcip} and \eqref{ciopciop}, it would follow $w\le0$,  which  is a contradiction. 
 This proves that  $\lambda\le C_0$. Since the constant $C_0$ does not depend on $\lambda$, we finally deduce 
\[
\lambda_f  =\sup \ E_f\le C_0.
\]
This  concludes  the proof of the Lemma.
%The next step is to properly combine inequalities \eqref{cipcip} with
%\eqref{ciopciop} to prove that $\lambda\le C_0$. Let us assume by
%contradiction that $C_0<\lambda$ and set $p
%=\sup_B\frac{w}{v}$, where $v$ is the supersolution to
%\eqref{auxilia} associated to $\lambda$. We stress that $p $
%is positive and finite since $v>0$ in $B$ by definition. Using Corollary
%\ref{ellipticdif},  we deduce that 
% Da rivedere da qui \EEE
%\[
%\hat{h}(p  v-w)+\hat{\mathcal{L}}_s(p  v-w)\ge p \lambda v -C_0 w\ge w(\lambda-C_0)>0.
%\]
%Then since $v>0=w$ on $\partial B$ and thanks to the strong maximum principle (Lemma \ref{strongmax}), we deduce that $p  v-w>0$ in $B$. This contradicts the definition of $\pi$.\\
\end{proof}
% \begin{teo}\label{mussaka}
% There exists a unique $\overline{\lambda}$ such that 
% \begin{equation}\label{eigenproblem}
% \begin{cases}
% {h}(x)v+{\mathcal{L}}_s (\Omega(x), v)=\overline{\lambda} v\qquad & \mbox{in } \Omega,\\
%  v(x) = 0 & \mbox{on }  \partial \Omega,
% \end{cases}
% \end{equation}
% admits a nontrivial (strictly) positive solution. Such a solution is unique up to a multiplicative constant. Moreover, for any $f\in C(\Omega)$ that satisfies \eqref{fconintro}, we have that $\overline{\lambda}=\sup E_f$.
% \end{teo}

Let us provide now the proof of the  well-posedness of the
first-eigenvalue problem.
\begin{proof}[Proof of Theorem \ref{mussaka}] We split the
  argument into subsequent steps. 

\textbf{Step 1:} Let us show at first that for a given $f\in C(\Omega)$ that satisfies \eqref{fconintro} and the additional condition
\be\label{temp}
f(x)\ge \theta>0 \ \ \ \mbox{in} \ \ \ \Omega,
\ee
problem \eqref{eigenproblem} admits a solution $v_f>0$ with $\lambda_f:=\sup E_f$.
Let $\{\lambda_n\}$ be a sequence that converges to ${\lambda_f}$ and
consider the associate sequence $\{v_n\}\subset C(\overline{\Omega})$,
$v_n>0$ of solutions to
\be\label{aug}
\begin{cases}
h(x)v_n(x)+\mathcal{L}_{s} (\Omega(x),v_n(x))=\lambda_n v_n(x)+ f(x) \qquad & \mbox{in } \Omega,\\
 v_n(x) = 0 & \mbox{on }  \partial \Omega.
\end{cases}
\ee
We first claim that $\|v_n\|_{\elle{\infty}}\to\infty$. Indeed, let us
assume by contradiction that there exists $k>0$ such that $ |v_n| \le k$. Thanks to Lemma \ref{barriercone}, we deduce that there exists $Q=Q(k)$ such that the function $\overline{l}(x)=Qd(x)^{\eta}$, with $\eta=\min\{\bar \eta, \eta_f\}$, solves in the viscosity sense
\[
h(x)\overline{l}(x)+\mathcal{L}_{s} (\Omega(x),\overline{l}(x))\ge \lambda_n v_n(x)+ f(x) \quad  \mbox{in } \Omega\,\quad \forall \ n>0 .
\]
Thanks to Corollary \ref{ellcomp} and the sign of $v_n$, we have that
\be\label{8mar}
0<v_n(x)\le Q d^{\eta}(x),
\ee
where the right-hand side does not depend on $n$.  
Using Lemma \ref{stability}, we deduce that $\underline{v}(x)=\inf\{\liminf_{n\to\infty} v_n(x_n) \ : \ x_n\to x\}$ is a supersolution to \EEE
%Using now Theorem \ref{regelliptic} and Lemma \ref{ascoli}, we deduce that there exists $v_{\infty}$ such that $v_n\to v_{\infty}$ uniformly in $\Omega$ (up to a subsequence) and $v_{\infty}=0$ on $\partial\Omega$. Moreover, Lemma \ref{stability} allows us to pass to the limit in \eqref{aug} and conclude that $v_{\infty}$ solves
\be\label{29-11}
\begin{cases}
h(x)v(x)+\mathcal{L}_{s} (\Omega(x),v(x))= \lambda_f \EEE v(x)+ f(x) \qquad & \mbox{in } \Omega,\\
 v(x) = 0 & \mbox{on }  \partial \Omega.
\end{cases}
\ee
 
Since $v_n> 0$ also $\underline{v}\ge0$ in $\Omega$ and Lemma \ref{strongmax} assures
that $\underline{v}>0$ in $\Omega$. Then, Theorem \ref{approximation}
provides   a solution $v_{\infty}>0$ to \eqref{29-11}. \EEE Taking  $\epsilon>0$ such that $f\ge
\frac12f+\epsilon v_{\infty}$, which is possible thanks to \eqref{temp}, it follows that $\tilde{v}_{\infty}=2v_{\infty}$ satisfies
\[
h(x)\tilde{v}_{\infty}(x)+\mathcal{L}_{s} (\Omega(x),\tilde{v}_{\infty}(x))\ge(\lambda_f+\epsilon) \tilde{v}_{\infty}(x)+ f(x) \qquad  \mbox{in } \Omega.
\]
 Taking again advantage of Theorem \ref{approximation} we reach a
contradiction with  respect to  the definition of
$\lambda_f$. \EEE

 We have then proved  that
$\|v_n\|_{\elle{\infty}}\to\infty$. Setting
$u_n=v_n\|v_n\|_{\elle{\infty}}^{-1}$  we obtain  that
\[
\begin{cases}
h(x)u_n(x)+\mathcal{L}_{s} (\Omega(x),u_n(x))=\lambda_n u_n(x)+
\frac{ f(x) }{\|v_n\|_{\elle{\infty}}} \qquad & \mbox{in } \Omega,\\
 u_n(x) = 0 & \mbox{on }  \partial \Omega.
\end{cases}
\]
 Since $0<u_n\le 1$, we deduce as in \eqref{8mar} that $u_n\le Qd^{\eta}(x) $ and then
\[
\overline{u}(x)=\sup\{\limsup_{n\to\infty} u_n(x_n) \ : \ x_n\to x\}, \ \ \ \ \underline{u}(x)=\inf\{\liminf_{n\to\infty} u_n(x_n) \ : \ x_n\to x\},
\]
are well defined and vanish on the boundary. Lemma \ref{stability} assures us that they are sub and super solution to 
\be\label{paris}
\begin{cases}
h(x)u(x)+\mathcal{L}_{s} (\Omega(x),u(x))= \lambda_f \EEE  u(x)\qquad & \mbox{in } \Omega,\\
 u_{\infty}(x) = 0 & \mbox{on }  \partial \Omega.
\end{cases}
\ee

By definition of $u_n$, there exists a sequence of points $\{x_n\}\subset\Omega$ such that $u_n(x_n)=1$. Thanks to the uniform bound $u_n\le Qd^{\eta}(x) $, we deduce that there exists $\Omega'\subset\subset \Omega$ and that $\{x_n\}\subset\Omega'$. Using Lemma \ref{regelliptic}, it follows that
\[
\|u_n\|_{C^{\gamma}(\Omega')}\le \tilde C(C,s,\zeta,d(\Omega'', \Omega'), Q\|d\|_{\elle{\infty}}^{\eta}),
\]
with $\Omega'\subset\subset \Omega''\subset\subset \Omega$. Then, up
to a not relabeled subsequence, $u_n$ uniformly converges to
a continuous function $u\in C(\overline{\Omega'})$. Furthermore,
$u\equiv \overline{u}\equiv \underline{u}$ in $\Omega'$ and
$u_n(x_n)\to u(\bar x)=1$ for some $\bar x\in \Omega'$. This
entails on the one hand that % provide us with two important pieces of
                             % information. The first one is that 
 $\underline{u}$ is a non negative and nontrivial supersolution to
\eqref{paris}, so that Lemma \ref{strongmax} implies
$\underline{u}>0$. On the other hand,  we obtain that there exists  $\bar x\in \Omega'$ such that $\overline{u}(\bar x)>0$.

By applying again Lemma \ref{aap} directly to $\overline u$ and $\underline
u$, we conclude that there exist $t>0$ such that $\overline u= t
\underline u$.  This implies that both functions are continuous.
Moreover, since $t>0$ we deduce that both $\underline u$ and
$\overline u$ are at the same time sup- and super-solutions. Hence, $\underline u$ and
$\overline u$  are eigenfunctions related to $\lambda_f$.  \EEE

Now we want to get rid of assumption \eqref{temp}. Notice that we used it to show that $\|v_n\|_{\elle{\infty}}$ must diverge.
Then, we have to prove that $\|v_n\|_{\elle{\infty}}\to\infty$,
assuming that the nontrivial positive continuous function $f$ solely
satisfies \eqref{fcon}. Again, let us argue by contradiction and
suppose that $\|v_n\|_{\elle{\infty}}\le k$.  This would lead again to
the existence of a non trivial $v_{\infty}$ solution to \eqref{29-11}. Setting $g=\sup\{f,\theta\}$, the previous argument provides us with $\lambda_g>0$ and $v_g>0$ solution to \eqref{eigenproblem}. 
Since $f\le g$, by construction  we deduce that  $\lambda_f\le
\lambda_g$. Assume that $\lambda_f< \lambda_g$ and take $\mu\in 
(\lambda_f,\lambda_g)$.  Then, thanks to the definition of
$\lambda_g$, the fact that $f\le g$ and  by   using Theorem \ref{approximation}, it results that the following problem
\[
\begin{cases}
h(x)z(x)+\mathcal{L}_{s} (\Omega(x),z(x))=\mu z(x) + f(x) \qquad & \mbox{in } \Omega,\\
 z(x) = 0 & \mbox{on }  \partial \Omega,
\end{cases}
\]
admits a positive solution.  This however  contradicts the
fact that $\lambda_f$ is a supremum. On the other hand, if $\lambda_f=
\lambda_g$, Lemma \ref{aap}, applied to $v_{\infty}$ and $v_g$, would
imply $v_g\le0$,  which is again  a contradiction.

\textbf{Step 2:} Let us assume that there exists another couple $(\mu, w)\in \mathbb (0,\infty)\times C(\overline \Omega)$ that solves \eqref{eigenproblem} with $w>0$. If $\mu< \lambda_f$, we deduce that there exists
$\lambda\in (\mu,\lambda_f)$ and, by definition of $\lambda_f$ and
Lemma \ref{acotado}, a function $u\in C(\overline{\Omega})$, with
$u>0$ in $\Omega$,  solving 
\[
\begin{cases}
{h}(x)u(x)+{\mathcal{L}}_s (\Omega(x),u(x))=\lambda u(x)+ f(x) \qquad & \mbox{in } \Omega,\\
 u(x) = 0 & \mbox{on }  \partial \Omega.
\end{cases}
\]
On the other hand, since $\mu<\lambda$ and $w>0$ we have that
\[
\begin{cases}
{h}(x)w(x)+{\mathcal{L}}_s (\Omega(x),w(x))\le\lambda w(x) \qquad & \mbox{in } \Omega,\\
 w(x) = 0 & \mbox{on }  \partial \Omega.
\end{cases}
\]
Being in the same setting of Lemma \ref{aap}, we deduce that $w\le0$,
 which  is a contradiction. 
%Let us stress at this point that this argument implies that $\lambda_f$ does not depend on $f$. Indeed take $f,g$ that satisfy \eqref{26-6} and such that $g\le f$. Then thanks to Theorem \eqref{approximation}

 This proves that  $ \mu\ge\lambda_f$. Assume by contradiction, that $\mu> \lambda_f$. Take $\epsilon>0$ small enough in order to have $\lambda_f<\mu-\epsilon$ and a nonnegative nontrivial continuous function $g(x)$ such that $\epsilon w \ge g$ in $\Omega$. It follows that $w$ solves
\[
{h}(x)w(x)+{\mathcal{L}}_s (\Omega(x),w(x))\ge(\mu-\epsilon) w(x) + g(x) \qquad \mbox{in } \Omega.
\]
Using Theorem \ref{approximation} we deduce that there exists $w_g$ solution to 
\[
\begin{cases}
{h}(x)w_g(x)+{\mathcal{L}}_s (\Omega(x), w_g(x))=(\mu-\epsilon) w_g(x)+ g(x) \qquad & \mbox{in } \Omega,\\
 w_g(x) = 0 & \mbox{on }  \partial \Omega.
\end{cases}
\]
Applying the refined comparison principle of Lemma \ref{aap} between
$v_f$ and $w_g$ it follows that $v_f\le 0$,  which  is a
contradiction.  We eventually conclude  that
$\lambda_f=\mu$. This also implies that
$\lambda_f=\lambda_g=\overline{\lambda}$ for all $f,g$ that satisfy
\eqref{fconintro}.

\textbf{Step 3.} In this last step we show that solutions of
\eqref{eigenproblem} are unique up to a multiplicative
constant. Assume that $w$ is a nontrivial solution to
\eqref{eigenproblem} and let $v_f>0$ be the solution provided by Step
1. Since $w$ is nontrivial we can always assume, up to a
multiplication with a  (not  necessarily positive) constant, that $w(x_0)>0$. Then, we can use the second part of Lemma \ref{aap} to conclude that $w=t v_f$ for some constant $t$.
\end{proof}

\begin{proof}[Proof of Theorem \ref{helmholtz}] 
Since $\lambda< \overline{\lambda}$, thanks to the assumptions on $f$
and using characterization \eqref{28.06}, we deduce that there exists
a function $v>0$ in $\Omega$  solving 
\[
\begin{cases}
{h}(x)v(x)+{\mathcal{L}}_s (\Omega(x),v(x))=\lambda v(x)+ |f(x)| \qquad & \mbox{in } \Omega,\\
 v(x) = 0 & \mbox{on }  \partial \Omega.
\end{cases}
\]
Clearly, $v$ is a supersolution for \eqref{helmeq} and we can take
advantage of Theorem \ref{approximation} to conclude that
\eqref{helmeq} admits a solution. To deal with uniqueness we assume that \eqref{helmeq} has two solutions $v$ and $z$ and set $w=v-z$. Using Corollary \ref{ellipticdif} it follows that $w$ solves
\[
\begin{cases}
{h}(x)w(x)+{\mathcal{L}}_s (\Omega(x),w(x))=\lambda w(x)  \qquad & \mbox{in } \Omega,\\
 w(x) = 0 & \mbox{on }  \partial \Omega.
\end{cases}
\]
Applying Lemma \ref{aap} to $v$ and $w$, we conclude that $w\le
0$. The same conclusion holds for $ -w=  z-v$, 
so that $w=0$ and $v\equiv z$.  % we are done.
\end{proof}

%Let us consider now two families of sets  $x\mapsto \Omega(x)$ and $x\mapsto \Sigma(x)$ that satisfy \eqref{omegax} and define
%\[
%h_{\Omega}(x)=\int_{\tilde\Omega(x)^c}\frac{dz}{|z|^{N+2s}} \ \ \ \mbox{and} \ \ \ h_{\Sigma}(x)=\int_{\tilde\Sigma(x)^c}\frac{dz}{|z|^{N+2s}}.
%\]
%\[
%\begin{cases}
%{h}_{\Omega}(x)v+{\mathcal{L}}_s (\Omega(x), v)={\lambda}_{\Omega} v\qquad & \mbox{in } \Omega,\\
% v(x) = 0 & \mbox{on }  \partial \Omega,
%\end{cases}
%\ \ \ \ \  \ \ \ 
%\begin{cases}
%{h}_{\Sigma}(x)v+{\mathcal{L}}_s (\Sigma(x), v)={\lambda}_{\Sigma} v\qquad & \mbox{in } \Omega,\\
% v(x) = 0 & \mbox{on }  \partial \Omega,
%\end{cases}
%\]
%\begin{lemma}
%Let us consider two set valued functions $x\mapsto \Omega(x)$ and $x\mapsto \Sigma(x)$. Assume that both of them satisfies assumption \eqref{omegax} and that moreover
%\[
%\Omega(x) \le \Sigma(x) \ \ \ \mbox{for any } \ \ x\in\Omega.
%\]
%Then we have 
%\[
%\lambda_1(\Omega(x))\ge \lambda_1(\Sigma(x)).
%\]
%\end{lemma}

\section{Asymptotic analysis}\label{Asymptotic}

In this  last  section, we address the large time behavior of the solution to \eqref{parabolic}. 
%Our first results says that, if the source of the problem goes to zero fast enough as $t$ diverges, then the solution to \eqref{parabolic} converges uniformly to zero for large times. 

\begin{teo}\label{totoinfty} Let us assume \eqref{alpha}-\eqref{Sigmadef}, that $ g_0 \in C(\Omega)$ with supp$( g_0
  )\subset\subset\Omega$,  and that there exist constants $\eta_g,C_g>0$ such that the continuous nonnegative function $g:(0,\infty)\times\Omega\to\mathbb R^{+}$ satisfies 
\be\label{gcon}
g(t,x)d(x)^{2s-\eta_g}e^{\lambda t}\le C_g \ \ \ \mbox{for some} \ \ \ \lambda<\overline{\lambda},
\ee
where $\overline{\lambda}$ is the first eigenvalue provided by Theorem \emph{\ref{mussaka}}.
Then, if $w\in C([0,\infty)\times\overline{\Omega})\cap
L^{\infty}((0,\infty)\times\Omega)$ satisfies in the viscosity sense 
\[
-g(x,t)\le \partial_tw(t,x)+h(x)w(t,x)+\mathcal{L}_{s}(\Omega(x),w(t,x)) \le g(x,t)   \ \ \ \mbox{in} \  (0,\infty)\times\Omega,
\]
coupled with boundary and initial conditions
\[
\begin{cases}
 w(t,x) = 0 & \mbox{on } (0,T)\times\partial\Omega,\\
 w(0,x) = g_0(x) & \mbox{on }   \Omega,
\end{cases}
\]  
 one has  $|w(x,t)|\le Q_{\lambda}d(x)^{\tilde
  \eta}e^{-\lambda t}$, for  all  $\tilde \eta\le \min\{\bar
\eta, \eta^g\}$ and  some  $Q_{\lambda} >0$ with
$Q_\lambda \to \infty$  as $\lambda\to \overline \lambda$ . 
\end{teo}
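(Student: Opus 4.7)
The plan is to construct an explicit parabolic supersolution that dominates $|w|$ and then invoke the comparison principle. I propose the barrier
\[
\psi(t,x) = e^{-\lambda t}\bigl(A\,\overline{\varphi}(x) + B\,d(x)^{\tilde\eta}\bigr),
\]
where $\overline{\varphi}>0$ is the first eigenfunction from Theorem~\ref{mussaka}, $\tilde\eta\le \min\{\bar\eta,\eta_g\}$, and the constants $A,B>0$ are to be chosen. Using $h\overline{\varphi}+\mathcal{L}_s(\Omega(x),\overline{\varphi})=\overline{\lambda}\,\overline{\varphi}$ together with the pointwise estimate of Lemma~\ref{barriercone} applied to $d^{\tilde\eta}$, I would verify that
\[
\partial_t\psi + h\psi + \mathcal{L}_s(\Omega(x),\psi) \ge e^{-\lambda t}\Bigl[A(\overline{\lambda}-\lambda)\overline{\varphi}(x) + B\bigl(\tfrac{\alpha}{2}d(x)^{\tilde\eta-2s}-\lambda d(x)^{\tilde\eta}\bigr)\Bigr].
\]
By \eqref{gcon}, it then suffices to ensure that this right-hand side dominates $C_g\,e^{-\lambda t}d(x)^{\eta_g-2s}$.

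To do so I would split $\Omega$ into a boundary layer $K_1=\{d(x)^{2s}\le \alpha/(4\lambda)\}$ and an interior region $K_2=\Omega\setminus K_1$. On $K_1$ one has $\tfrac{\alpha}{2}d^{\tilde\eta-2s}-\lambda d^{\tilde\eta}\ge \tfrac{\alpha}{4}d^{\tilde\eta-2s}$; since $\tilde\eta\le\eta_g$ and $\mathrm{diam}(\Omega)<\infty$, the choice $B\ge (4C_g/\alpha)\,\mathrm{diam}(\Omega)^{\eta_g-\tilde\eta}$ absorbs the boundary singularity. On the compact set $K_2$, the strong maximum principle (Lemma~\ref{strongmax}) yields $\overline{\varphi}\ge\mu>0$, while all other quantities are uniformly bounded; enlarging $A$ so that $A(\overline{\lambda}-\lambda)\mu$ dominates both the (possibly negative) $B$-contribution and $C_g\,d(x)^{\eta_g-2s}$ on $K_2$ closes the inequality.

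For the initial condition I would exploit the compact support of $g_0$: since $\overline{\varphi}\ge\mu_0>0$ on $\mathrm{supp}(g_0)$, a further enlargement of $A$ secures $\psi(0,\cdot)\ge|g_0|$ on that compact set, hence on all of $\Omega$, while $\psi=0$ on $\partial\Omega$ matches the boundary datum of $w$. Applying Lemma~\ref{timecomp} on every finite slab $(0,T)\times\Omega$ to both $(w,\psi)$ and $(-w,\psi)$ yields $|w|\le\psi$ globally in time. To collapse $\psi$ into the stated form $Q_\lambda\,d(x)^{\tilde\eta}e^{-\lambda t}$, I would finally absorb the eigenfunction term via the bound $\overline{\varphi}(x)\le Q_0\,d(x)^{\tilde\eta}$, which is built into the construction of $\overline{\varphi}$ in Theorem~\ref{mussaka} through the uniform estimate analogous to \eqref{8mar}.

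The main obstacle is the interior region: the shifted stationary operator $h-\lambda+\mathcal{L}_s$ loses coercivity as $\lambda\nearrow\overline{\lambda}$, so a pure power barrier $d^{\tilde\eta}$ cannot work alone. The eigenfunction provides precisely the positive \emph{interior reserve} $A(\overline{\lambda}-\lambda)\overline{\varphi}$ needed to offset the sign-indefinite term $-\lambda\,d^{\tilde\eta}$ away from $\partial\Omega$. This also explains the degeneracy of $Q_\lambda$ asserted in the statement: the interior reserve vanishes as $\lambda\nearrow\overline{\lambda}$, forcing $A$, and hence $Q_\lambda$, to blow up.
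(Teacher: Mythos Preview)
Your argument is correct but follows a genuinely different route from the paper. The paper does not build the spatial profile by hand; instead it invokes the characterization of $\overline{\lambda}$ (Theorem \ref{mussaka}/\ref{helmholtz}) to obtain, for the given $\lambda<\overline{\lambda}$, the unique positive viscosity solution $\varphi_\lambda$ of the single shifted elliptic problem
\[
h(x)\varphi_\lambda(x)+\mathcal{L}_s(\Omega(x),\varphi_\lambda(x))=\lambda\varphi_\lambda(x)+C_g\,d(x)^{\eta_g-2s},\qquad \varphi_\lambda=0 \ \text{on } \partial\Omega,
\]
and then checks directly that $\overline w(t,x)=e^{-\lambda t}\varphi_\lambda(x)$ is a parabolic supersolution dominating $g$. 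No boundary/interior splitting is needed: by construction the right-hand side already matches $g$ through \eqref{gcon}. Only at the very end does the paper bound $\varphi_\lambda\le Q\,d^{\tilde\eta}$ via Lemma~\ref{barriercone} and comparison. Your approach replaces this single elliptic solve by the explicit ansatz $A\overline{\varphi}+B\,d^{\tilde\eta}$ and pays for it with the two-region argument; in return you get a transparent mechanism for the blow-up of $Q_\lambda$ (the interior reserve $A(\overline\lambda-\lambda)\mu$), whereas in the paper's proof this degeneracy is hidden in the dependence of $\|\varphi_\lambda\|_{L^\infty}$ on $\lambda$.

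One technical point you should make explicit: the inequality you write for $\partial_t\psi+h\psi+\mathcal{L}_s(\Omega(x),\psi)$ tacitly uses that the sum of two viscosity supersolutions of the linear operator is again a supersolution with summed right-hand side. This is not stated as a lemma in the paper, but it follows from Corollary~\ref{ellipticdif} by a sign change: since $\overline{\varphi}$ is a solution, $-A\overline{\varphi}$ is a subsolution with right-hand side $-A\overline{\lambda}\,\overline{\varphi}$; applying Corollary~\ref{ellipticdif} to the pair $(-A\overline{\varphi},\,B\,d^{\tilde\eta})$ and negating gives exactly what you need. Mentioning this closes the only visible gap.
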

\begin{proof}
Let us consider $\varphi_{\lambda}$  solving 
\begin{equation}\label{barrierup}
\begin{cases}
h(x)\varphi_{\lambda}(x)+\mathcal{L}_{s} (\Omega(x),\varphi_{\lambda}(x))= \lambda \varphi_{\lambda}(x) +C_g d(x)^{\eta_g-2s} \qquad & \mbox{in } \Omega,\\
 \varphi_{\lambda}(x) = 0 & \mbox{on } \partial\Omega,\\
 \varphi_{\lambda}(x) > 0 & \mbox{in } \Omega.\\
\end{cases}
\end{equation}
Such a function exists since $\lambda<\overline{\lambda}$ and thanks to the characterization of Theorem \ref{mussaka}.
We want to prove that $\overline w(t,x)=e^{-\lambda t}\varphi_{\lambda}(x)$ solves in the viscosity sense
\[
\partial_t\overline w(t,x)+h(x)\overline w(t,x)+\mathcal{L}_{s}(\Omega(x),\overline w(t,x)) \ge g(x,t) \qquad \mbox{in }  (0,\infty)\times\Omega.
\]
 In order to achieve this,  let $\phi\in C^2(\Omega\times(0,\infty))$ and $(t,x)\in(0,\infty)\times\Omega$ such that $\overline w(t,x)=\phi(t,x)$ and that $\overline w(\tau,y)\ge\phi(\tau,y)$ for $(\tau,y)\in(0,\infty)\times\Omega$. We need to check (see Lemma \eqref{def2}) that for any $B_r(x)\subset \Omega$
\[
 \partial_t  \phi_r( t , x )+ h( x ) \phi_r( t , x )+\mathcal{L}_{s}(\Omega(x), \phi_r(t,x))\ge g(x,t) . 
\]
By construction of $\phi_r$ we have that $\partial_t  \phi_r( t , x
)\ge -\lambda e^{-\lambda t}\varphi_{\lambda}(x)$. Moreover, the
function $\psi^t(y)=\phi e^{\lambda t}$ touches  $\varphi_{\lambda}$
at $x$ from below. Then, we infer that 
\begin{align*}
&
h( x ) \phi_r( t , x )+\mathcal{L}_{s}(\Omega(x),
  \phi_r(t,x))=e^{-\lambda t}\left[ h( x )\psi^t_r( x
  )+\mathcal{L}_{s}(\Omega(x), \psi^t_r( x  )) \right]\\
&\quad \ge e^{-\lambda t} [ \lambda \varphi_{\lambda}(x) +C_g d(x)^{\eta_g-2s}],
\end{align*}
where the last inequality follows from the definition of
$\varphi_{\lambda}$.  By collecting the information obtained  we get that
\begin{align*}
\quad& \partial_t  \phi_r( t , x )+ h( x ) \phi_r( t , x
  )+\mathcal{L}_{s}(\Omega(x), \phi_r(t,x))- g(x,t)\\
&\quad
\ge -\lambda e^{-\lambda t}\varphi_{\lambda}(x) + e^{-\lambda t} [
  \lambda \varphi_{\lambda} +C_g d(x)^{\eta_g-2s}] - g(x,t)\\
&\quad=e^{-\lambda t}d(x)^{\eta_g-2s} [C_g -g(x,t)e^{\lambda t}d(x)^{2s-\eta_g}]\ge0,
\end{align*}
where the last inequality comes from assumption \eqref{gcon}. Similarly, we can prove that $\underline w(t,x)=-e^{-\lambda t}\varphi_{\lambda}(x)$ solves
\[
g(x,t) \le \partial_t\overline w(t,x)+h(t,x)\overline w(t,x)+\mathcal{L}_{s}(\Omega(t,x),\overline w(t,x))  \qquad \mbox{in }  (0,\infty)\times\Omega.
\]

Since  $g_0$ has a compact support   we can assume
$| g_0 |\le\varphi_{\lambda}$,  for otherwise we can
consider  $k\varphi_{\lambda}$ instead of $\varphi_{\lambda}$ for
large $k>0$.  Therefore, we  can  use the comparison principle to deduce that
\[
|w(t,x)|\le e^{-\lambda t}\varphi_{\lambda}(x).
\]
At this point, notice that the right-hand side of the  first  equation in \eqref{barrierup} can be estimated as follows
\[
\lambda \varphi_{\lambda}(x) +C_g d(x)^{\eta_g-2s}\le \lambda \|\varphi_{\lambda}\|_{\elle{\infty}}+C_g d(x)^{\eta_g-2s}\le C_{\lambda,g }d(x)^{\eta_g-2s}. 
\]
This implies that there exists $Q=Q(C_{\lambda,g })$ large enough so
that $\overline l= Qd(x)^{\tilde \eta}$ is a super solution to
\eqref{barrierup} (see Lemma \ref{barriercone}). Then, we can use the
comparison principle to conclude that $\varphi_{\lambda}(x)\le
\overline l$,  which concludes   the proof.
\end{proof}
%\begin{remark}
%A straightforward application of Theorem \eqref{totoinfty} is that any bounded solution to
%\[
% \partial_tw+h(t,x)w+\mathcal{L}_{s}(\Omega(t,x),w) = f(x,t)    \ \ \ \mbox{in} \  (0,\infty)\times\Omega,
%\]
%with $f(x,t) $ satisfying $|f(t,x)|d(x)^{2s}e^{\lambda t}\le C$, for $\lambda$ as in \eqref{gcon} and for some positive $C$, converges to zero as $t\to\infty$.
%\end{remark} 
%Now we want to deal with nontrivial stationary states. 
%The general strategy is: to consider $u(t,x)$, the unique viscosity
%solution to the parabolic problems \eqref{parabolic}, and $v(x)$, the
%unique viscosity solution of the stationary problem \eqref{10:17}; to
%assume that the time dependent \emph{coefficients} ($h(t,x),
%\Omega(t,x), f(t,x)$) of the first suitable converge to the time
%independent ones of the latter; finally to apply Theorem
%\ref{totoinfty} to $w(x,t)=u(x,t)-v(x)$. \\

 We conclude by presenting a proof of  Theorem \ref{lalaguna}.
\begin{proof}[Proof of Theorem \ref{lalaguna}]
Using Lemma \ref{differenza}, it follows that $w(t,x)=u(t,x)-v(x)$ solves in the viscosity sense
\be\label{4-6}
 \partial_tw(t,x)+h(t,x)w(t,x)+\mathcal{L}_{s} (\Omega(x),w(t,x))\le \tilde{f}(x,t) \qquad \mbox{in }  (0,\infty)\times\Omega
\ee
where
\[
\tilde{f}(x,t)=|f(x,t)-f({x} )|+M|h({t} ,{x} )-h({x} )|+2M\int_{\mathbb{R}^N}\frac{|\chi_{\Omegat({t} ,{x} )}-\chi_{\Omegat({x} )}|}{|z|^{N+2s}}dz.
\]
Similarly, we can apply again Lemma \ref{differenza} to $-w(t,x)=v(x)-u(t,x)$ to deduce that 
\be\label{4-6bis}
 \partial_tw+h(t,x)w+\mathcal{L}_{s} (\Omega(x),w)\ge -\tilde{f}(x,t) \qquad \mbox{in }  (0,\infty)\times\Omega.
\ee
Notice now that 
\[
\int_{\mathbb{R}^N}\frac{|\chi_{\Omegat({t} ,{x} )}-\chi_{\Omegat({x} )}|}{|z|^{N+2s}}dz=\int_{|z|\ge\frac{\zeta}{2}d(\bar x)}\frac{|\chi_{\Omegat({t} ,{x} )}-\chi_{\Omegat({x} )}|}{|z|^{N+2s}}dz
\]
\[
\le \frac{C}{d(x)^{N+2s}}|\Omega(t,x)\Delta\Omega(x)|\le  \frac{Ce^{-\lambda t}}{d(x)^{2s-\eta_1}},
\]
where we have used assumptions \eqref{ostationary} and \eqref{omegax} to deduce the equation in the first line, and assumption \eqref{decayomega} to deduce the last inequality in the second line.
Thanks to inequalities \eqref{4-6} and \eqref{4-6bis},  the
assertion follows  by a direct application of Theorem \ref{totoinfty}.
\end{proof}

%%%%%%%%%%%%%%%%%%%%%%%%%%%%%%%%%%%%%%%%%%
\section*{Acknowledgement} 
\noindent S.B. is supported by the Austrian Science Fund (FWF) projects F65, P32788 and FW506004. 
 U.S. is supported by the Austrian Science Fund (FWF)
projects F\,65, W\,1245, I\,4354, I\,5149, and P\,32788 and by the
OeAD-WTZ project CZ 01/2021.  

\EEE

\end{document}